\documentclass{article}
\usepackage[margin = 3cm, bottom = 3cm]{geometry}
\usepackage[all]{xy}
\usepackage{makeidx}
\usepackage{changes}
\usepackage[mathscr]{eucal}

\title{The holonomy of a singular leaf}

\author{ Camille Laurent-Gengoux\thanks{Institut Elie Cartan de Lorraine, UMR 7502, Universit\'e de Lorraine,  France},
Leonid Ryvkin 
\thanks{Georg-August-Universit\"at G\"ottingen,
Institut f\"ur Mathematik, Germany}
\thanks{Institut Math\'ematiques de Jussieu, Universit\'e Paris Diderot, Paris, France.} \thanks{Faculty of mathematics, Universit\"at Duisburg-Essen, Essen, Germany }}

\AtEndDocument{\bigskip{\footnotesize
  \textsc{C. Laurent-Gengoux},  \textrm{ Institut Elie Cartan de Lorraine (UMR 7502), Universit\'e de Lorraine, 3 rue Augustin Fresnel, 57000 Metz, Technop\^ole, France}. \par \texttt{camille.laurent-gengoux@univ-lorraine.fr} \par
  \addvspace{\medskipamount}
  \textsc{L. Ryvkin},  \textrm{
Georg-August-Universit\"at G\"ottingen,
Institut f\"ur Mathematik
Bunsenstr. 3-5
37073 G\"ottingen, Germany}. \par \texttt{leonid.ryvkin@mathematik.uni-goettingen.de }
}}

\usepackage{latexsym,amsfonts,amsthm,amsmath,amscd,amssymb,color}
\usepackage{cite}

\newtheorem{lemma}{Lemma}[section]
\newtheorem{flemma}[lemma]{Fundamental Lemma}
\newtheorem{theorem}[lemma]{Theorem}
\newtheorem{proposition}[lemma]{Proposition}
\newtheorem{corollary}[lemma]{Corollary}

\newtheorem{definition}[lemma]{Definition}
\newtheorem{convention}[lemma]{Convention}
\newtheorem*{convention*}{Convention}
\newtheorem{example}[lemma]{Example}
\newtheorem{remark}[lemma]{Remark}

\newtheorem*{question}{Question}

\date{}

\newcommand{\Cinfty}[1]{{\mathcal C^\infty_{#1} }}
\newcommand{\fol}[1]{{\mathfrak #1}}
\newcommand{\sh}[1]{{\mathcal O_\bullet^{#1}}}
\newcommand{\NQ}[1]{{\mathcal #1}}
\makeindex
\setlength\parindent{0pt}
\begin{document}

\maketitle

\begin{abstract}
	We introduce the holonomy of a singular leaf $L$ of a singular foliation as a sequence of group morphisms from $\pi_n(L)$ to the $\pi_{n-1}$ of the universal Lie $\infty$-algebroid of the transverse foliation of $L$. We
	include these morphisms in a long exact sequence, 
	thus relating them to the holonomy groupoid of Androulidakis and Skandalis and to a similar construction by Brahic and Zhu for Lie algebroids.
\end{abstract}

\tableofcontents

\setcounter{section}{0}
\newpage

\section*{Introduction}
The holonomy of a leaf $L$ of a regular foliation $ {\fol{F}}$ on a manifold $M$ is defined \cite{MR47386, MR0055692,MR189060} as a group morphism:
\begin{equation} \label{eq:holonomy} Hol : \pi_1(L,\ell) \mapsto {\mathrm{Diff}}_{\ell}(T_\ell), \end{equation}
where $\pi_1(L),\ell$ is the fundamental group of $L$ at $\ell \in L$ and ${\mathrm{Diff}}_{\ell}(T_\ell)$ is the group of germs of diffeomorphisms of a transversal $T_\ell$ of $L$ at a point $\ell \in L$. Assuming the existence of Ehresmann connections on a tubular neighborhood of $L$, the group morphism \eqref{eq:holonomy} then takes values in the group of diffeomorphisms of some fixed transversal. The construction of $Hol$ consists in lifting loops on $ L$ based at $\ell $ to a diffeomorphism  $T_\ell \simeq  T_{\ell}$, then in showing that homotopic loops induce the same diffeomorphism. \\

Consider now a singular foliation \cite{MR0149402,Cerveau,D,AS09}, \emph{i.e.}~a ``locally finitely generated  sub-module $ {\fol{F}}$ of the module of vector fields, stable under Lie bracket''.
By Hermann's theorem \cite{MR0142131}, singular foliations induce a partition of $M$ into submanifolds called leaves of the foliation. Unlike for regular foliations, the leaves may not be all of the same dimension. 
Those leaves around which the dimension jumps are called \emph{singular leaves}. This leads us to the natural:

\begin{question}
	\label{ques:main}
	How to define an analogue of the holonomy \eqref{eq:holonomy} for a singular leaf of a singular foliation?
\end{question}

Dazord \cite{Dazord} proposed in 1985 an answer, which consists of a group morphism from the fundamental group of the leaf to the bijections of the orbit space of the transversal. Recently, Androulidakis and Zambon \cite{AZ13,AZ2} used the holonomy Lie groupoid of a singular foliation \cite{AS09} as a replacement for the fundamental group for a singular leaf.  Using the universal Lie $ \infty$-algebroid  of a singular foliation discovered in \cite{LLS}, we propose a construction of ``higher holonomies'' taking into account the higher homotopy groups of the singular leaves. The first of these holonomies is the one defined by Dazord in \cite{Dazord}. We are also able relate our construction to Andoulidakis and Zambon's holonomy, but our construction differs from theirs. For leaves reduced to a point, our construction becomes trivial, while their construction still makes sense.

 To explain the construction, recall that to most \footnote{More precisely, the universal Lie $\infty$-algebroid is shown in \cite{LLS} to exist for any singular foliation that admits a geometric resolution. This happens in particular for locally real analytic singular foliations, that is singular foliations that have, in a neighborhood of every point, generators which are real analytic in some local coordinates. This class is quite large. The construction has been later extended by the first author and Ruben Louis (see \cite{LGLo}) to arbitrary Lie-Rinehart algebras (see also \cite{1811.03078}), in particular to general singular foliations, at the cost that the Lie $\infty$-algebroid could have infinite dimension.} singular foliations ${\fol{F}}$ on $M$, Sylvain Lavau, Thomas Strobl and C.L.  associated in \cite{LLS} a Lie $\infty$-algebroid ${\NQ{U}}^{\fol{F}}$ {concentrated in negative degree}, constructed over  {projective} resolutions of $ {\fol{F}}$ as a module over smooth functions on $M$. Although the construction of ${\NQ{U}}^{\fol{F}}$ relies on several choices, it is unique up to homotopy.
In particular, its homotopy groups $\pi_n( {\NQ{U}}^{\fol{F}},m) $,
at a point $m \in M$, \emph{i.e.}~homotopy classes of maps from $S^n$ to $ {\NQ{U}}^{\fol{F}}$ mapping the north pole to $m$, depend only on the singular foliation $\fol{F}$. We can therefore denote these groups as $ \pi_n({\fol{F}},m)$, without any reference to the universal Lie $\infty$-algebroid ${\NQ{U}}^{\fol{F}}$. Altogether, these groups form a bundle of groups over $M$: its group of (smooth) sections shall be denoted by $\Gamma(\pi_n({\fol{F}} ))$. We denote its group of smooth sections by $\Gamma(\pi_n(\fol{F}))$.
For $n=0$, by convention, $\Gamma(\pi_0({\fol{F}} ))$ consists in bijections of the orbit space  $ M/\fol{F}$ that come from a leaf-preserving smooth diffeomorphism of $M$.
Now, let us choose a leaf $L$. Every submanifold $T$ transverse to $L$ comes equipped with a singular foliation, which does not depend on the choice of $T$, see \cite{Cerveau,Dazord,AS09}.
It is called the \emph{transverse foliation} and we denote it by $ {\fol{T}}_\ell$ , with $ \ell $ the intersection point $T \cap L$. 
We claim that a reasonable answer to the question above is to define the higher holonomies as a sequence of group morphisms:
 \begin{equation}\label{eq:intro:Hol}
     Hol : \pi_n (L,\ell) \mapsto \Gamma(\pi_{n-1}  ({\fol{T}}_\ell )),
 \end{equation} 
 that we show in the present article to exist, provided that the leaf $L$ admits a complete Ehresmann connection. If no Ehresmann connection is complete, then the constructions would still make sense, but at the level of germs.
We also justify the interest of the definition by showing that these morphisms belong to a long exact sequence:
  \begin{equation}\label{eq:intro:long}
 \dots \to  \pi_{n+1}(L,\ell) \stackrel{Hol^{}}{  \longrightarrow }  \pi_n({\fol{T}}_\ell , m) \to  \pi_n({\fol{F}} , {m}) \to \pi_n(L,\ell) \stackrel{Hol^{}}{  \longrightarrow }  \pi_{n-1}({\fol{T}}_\ell , m) \to \dots  .
 \end{equation}
 for all $m \in p^{-1}(\ell)$.
 
Indeed, the existence of such a long exact sequence is a general phenomenon for NQ-manifold fibrations and is proven in this context, see Theorem \ref{thm:snake}. This extends a theorem of Olivier Brahic and Chenchang Zhu \cite{Zhu} about Lie algebroids, which was an important inspiration for the present work. This construction is quite involved and is addressed in Section \ref{sec:nq}.

In Section \ref{sec:leaf}, we apply the results of Section \ref{sec:nq} to the universal Lie $\infty$-algebroid of a singular foliation: we first carefully describe the contexts where this construction is possible, and introduce the notion of (complete) $ \fol{F}$-connection. Eventually, we construct the higher holonomies \eqref{eq:intro:Hol} that {fit into} the exact sequence \eqref{eq:intro:long}.

We finish this article by relating our higher holonomies $ Hol$ with two types of Lie algebroids and groupoids. On the one hand, we show that our higher holonomies are universal in the following sense: Assume that there exists a Lie groupoid $ \mathbf G$ whose leaves are the leaves of $ \fol{F}$.
Then, the above-mentioned theorem by {Brahic-Zhu} implies that for every leaf $L$, {there exist} group morphisms:
 $$ \delta_{BZ} \colon \pi_n(L,\ell)  \mapsto  \Gamma(\pi_{n-1} ( \mathbf K)) $$
valued in sections of a bundle of group $ \pi_{n-1}( \mathbf K)$ whose precise sense shall be given in due time. There are canonical group morphisms $ \psi : \Gamma(\pi_{n-1} ( \mathbf K))  \to  \Gamma(\pi_{n-1}(\fol{T}_\ell)) $ by the universality of $\NQ{U}^\fol{F} $. We show that the higher holonomies $ Hol$ coincide with the composition $Hol = \psi \circ \delta_{BZ} $. This must be seen as a universality condition.

On the other hand, we claim that our higher holonomies are a refinement of holonomies naturally associated to Androulidakis and Skandalis holonomy groupoid $\mathbf T $ of the transverse singular foliation $ \fol{T}_\ell$. More precisely, we show that there exist natural group morphisms
 $$ Hol_{AS} : \pi_n(L,\ell) \to \Gamma(\pi_{n-1} ( \mathbf T))$$
  and canonical group morphisms $ \phi :  \Gamma(\pi_{n-1}(\fol{T}_\ell))  \to \Gamma(\pi_{n-1} ( \mathbf T)) $ (that correspond to  $1$-truncation of the universal NQ-manifold $\NQ{U}^\fol{F} $) such that $ Hol_{AS} = \phi \circ Hol $.

\begin{convention*}
For $\mathbf G$ a groupoid over a manifold $M$, and $ X,Y \subset M$, we use the following notations:
$$ \mathbf G|_X =s^{-1}(X) \hbox{ , }  \mathbf G|^Y= t^{-1}(Y) \hbox{ and }  \mathbf G|_X^Y =s^{-1}(X) \cap t^{-1}(Y). $$
\end{convention*}

\noindent {\textbf{Acknowledgments.}} We acknowledge crucial discussions with Iakovos Androulidakis (for  $ \fol{F}$-connections) and Pavol \v{S}evera (for fundamental groups of NQ-manifolds): We will mark them in the text in due places. We also thank the workshop and conference ``Singular Foliations'' in Paris Diderot and Leuven where the content of the article was presented. We also express special gratitude to Sylvain Lavau, Thomas Strobl, Marco Zambon and Chenchang Zhu. L. R. was supported by the Ruhr University Research School PLUS, funded by Germany’s Excellence Initiative [DFG GSC 98/3] and the PRIME programme of the
German Academic Exchange Service (DAAD) with funds from the German Federal Ministry of Education and Research (BMBF). Both authors were supported by CNRS MITI 80 Prime projet ``Granum''.
 {Finally, we would like to thank the referee for his  suggestions to clarify the article and improve the exposition.}

\section{Homotopy groups of NQ-manifolds}  
\label{sec:nq}

\subsection{On NQ-manifolds and Lie $\infty$-algebroids}

\subsubsection{NQ-manifolds, morphisms and  homotopies}

 {
An \emph{N-manifold} structure over a manifold $M$ is a sheaf of $\mathbb N^0$-graded commutative algebras $\sh{}$, called sheaf of functions, such that for all $p\in M$, there exists an open neighborhood $U$ and a negatively graded vector bundle $E_\bullet=\bigoplus_{i=1}^dE_{-i}$ over $U$ of finite rank, such that $\sh{}|_U$ is isomorphic to $ \Gamma(\cdot, E_\bullet)$. \\

An NQ-manifold $\NQ{M}=(M, \sh{\NQ{M}},Q_\NQ{M})$ is an N-manifold equipped with a degree $+1$ derivation of its sheaf of functions, squaring to zero.}

\begin{example}
\normalfont
Any manifold $ \Sigma$ induces an NQ-manifold denoted by $T[1]\Sigma$: Its functions are the  exterior forms $\Omega^\bullet_\Sigma $ equipped with the de Rham differential  ${\mathrm d}_{dR}$.
\end{example}

\begin{definition}
A  \emph{negatively graded Lie $\infty$-algebroid} over a manifold $M$ is a graded vector bundle $(E_{-i})_{i=1}^d  $ concentrated in degrees ranging from $-1$ to $-d$ whose sections come equipped  with a graded symmetric Lie $\infty $-algebra structure $\{l_k\}_{k \geq 1} $, such that the $k$-ary brackets $l_k$ are $\Cinfty{M}$-linear, except for $l_2 $ which satisfies:
 $$ l_2(x,fy) \,  = \,  f \,  l_2 (x,y)  \, +  \, \rho(x)[f] \, y  \hbox{  $~\forall f \in \Cinfty{M} $, $x \in E_{-1}$ and $ y \in \Gamma (E_{-i})$,} $$
 for some vector bundle morphism $ \rho \colon E_{-1} \to TM $ called the \emph{anchor map}. 
\end{definition} 
 
 {
It is well-known \cite{Voronov,Poncin} that given a graded vector bundle $E_\bullet=\bigoplus_{i=1}^dE_{-i}$, Lie $\infty$-algebroid structures on $E$ are in 1-to-1 correspondence with NQ-structures over the N-manifold $\Gamma(\cdot, S^\bullet E_\bullet^*)$.  Moreover, by Batchelors theorem any smooth NQ-manifold is of that form, i.e. $ \sh{M}$ is (non-canonically) isomorphic to $\Gamma(\cdot,  S^\bullet E_\bullet^*)$ for some graded vector bundle $E_\bullet$.
}\\

\begin{convention}
{From now on, we will write ``Lie $\infty$-algebroid'' instead of ``negatively graded Lie $\infty$-algebroid''. }
\end{convention}
{We warn the reader, that general (not necessarily negatively graded) Lie $\infty$-algebroids are much more involved: Its higher brackets may not be $\Cinfty{M}$-linear, but they may have many anchor maps, see e.g. \cite{referee}.}

Given a  Lie $\infty$-algebroid structure over $M$ as above, $ {\fol{F}} := \rho (\Gamma(E_{-1}))$ is easily shown to be a singular foliation, called the \emph{basic singular foliation}. Also, $l_1$ is $\Cinfty{M}$-linear and
$$  \dots \stackrel{l_1}{\to} E_{-3}  \stackrel{l_1}{\to} E_{-2} \stackrel{l_1}{\to}  E_{-1} \stackrel{\rho}{\to}  TM $$
is a complex of vector bundles, called the \emph{linear part} of $E_\bullet$.
\begin{remark}
\normalfont
{
An NQ-manifold also induces a singular foliation on $M$, called basic singular foliation, defined as $\rho(\Gamma(E_{-1}))$ for any Lie $\infty$-algebroid $E_\bullet$ as above. In particular, the base manifold of an NQ-manifold is partitioned into leaves, called $\NQ{M}$-leaves. Its leaf space will be denoted by $M/ \NQ{M}$.
}
\end{remark}

\begin{example}
\normalfont
For a manifold $\Sigma$, a Lie $\infty$-algebroid of the NQ-manifold $T[1]\Sigma$ is the tangent Lie algebroid $T\Sigma$. 
\end{example}

Let $\NQ{M} =(M, \sh{\NQ{M}},Q_\NQ{M}) $ and $\NQ{M}'=(M', \sh{\NQ{M}'},Q_\NQ{M'})$ be NQ-manifolds. An \emph{NQ-manifold morphism} $\Phi:  \NQ{M} \to \NQ{M}'$ is by definition a graded commutative differential algebra morphism $\Phi^*:\sh{\NQ{M}'} \to \sh{\NQ{M}}$. 

\begin{example}
\normalfont {
A smooth map $\phi:\Sigma\to \Sigma'$ induces an NQ-morphism $T[1]\Sigma\to T[1]\Sigma'$ by the pullback $\phi^*:\Omega^\bullet_{\Sigma'}\to \Omega^\bullet_\Sigma$. }
\end{example}

When $\sh{\NQ{M}}=\Gamma(\cdot, S^\bullet(E^*_\bullet))$ and $\sh{\NQ{M}'}=\Gamma(\cdot, S^\bullet((E')^*_\bullet))$, the latter is entirely described by:
\begin{enumerate}
	\item[$\bullet$] a smooth map $\phi: M \to M'$ called the \emph{base map} of $\Phi$;
	\item[$\bullet$] a sequence $(\phi_n)_{n \geq 1}$ of degree zero vector bundle morphisms $\phi_n \colon S^n (E) \to E' $ over $\phi$ called the \emph{Taylor coefficients} of $ \Phi$.
\end{enumerate}	 

For instance, an NQ-manifold morphism $ T[1]I\times  \NQ{M} \to \NQ{M}' $ is a differential graded algebra morphism\footnote{The symbol $\tilde{\otimes}$ stands for the completed tensor product, which is the right operation to deal with products of NQ-manifolds}: 
$$ \Phi \colon (\sh{\NQ{M}'},Q_{\NQ{M}'}) \longrightarrow  (\sh{\NQ{M}},Q_{\NQ{M}}) \tilde \otimes \Omega^\bullet_I,~~Q_{\NQ{M}} + {\rm d}_{dR}).$$
For any $t \in I$, $ \Phi$ admits a restriction to  an NQ-manifold morphism  $\Phi_t\colon \NQ{M} \hookrightarrow \{t\} \times  \NQ{M} \to \NQ{M}' $. 
The NQ-morphism $\Phi \colon T[1]I \times \NQ{M} \to \NQ{M}' $ is said to be \emph{constant} if the following diagram commutes:
$$ \xymatrix{  T[1]I\times  \NQ{M}  \ar[d]^{\Phi} \ar[r]^{pr} & \NQ{M} \ar[dl]^{\Phi_0} \\ \NQ{M}'& } $$
\begin{remark}
\normalfont
For a constant  NQ-manifold morphism $  T[1]I\times  \NQ{M}  \to \NQ{M}' $, the restrictions $ \Phi_t$ do not depend on $t \in I$. However $\Phi_t=\Phi_0$ for all $t\in I$ does not imply that $\Phi$ is constant.
\end{remark}

\begin{definition}
A \emph{homotopy between two NQ-manifold morphisms} $\Phi_0, \Phi_1\colon \NQ{M} \to \NQ{M}'$ is an NQ-manifold morphism
 $ T[1]I\times  \NQ{M}  \to \NQ{M}'$  whose restrictions to the endpoints of $I=[0,1]$ are $\Phi_0$ and $ \Phi_1 $ respectively.
\end{definition} 
A homotopy $T[1]I \times  \NQ{M}  \to \NQ{M}' $  between NQ-manifold morphisms $\Phi_0$ and $ \Phi_1 $  constant after restriction to $ {T[1]}[1-\epsilon,1]\times  \NQ{M}  \to \NQ{M}'  $ and a homotopy $[0,1]\times  \NQ{M}  \to \NQ{M}' $  between NQ-manifold morphisms $\Phi_1$ and $ \Phi_2 $  constant after restriction to $ {T[1]}[0, \eta] \times  \NQ{M} \to \NQ{M}'  $ can be concatenated so that the concatenation remains smooth.
Also, any homotopy can be transformed by a smooth rescaling of $I$ to a homotopy with these properties.
\\

Consider an N-manifold  with functions $\sh{}$ over $M$. Every degree zero vector field $Y\in Der(\sh{})$ induces a derivation of $\Cinfty{M}=\mathcal O_0$, \emph{i.e.~}a vector field $\underline{Y} $ on the base manifold $M$.

\begin{lemma}\label{lem:flow}
Let $Y$ be a degree zero vector field on an NQ-manifold $\NQ{M}=(M, \sh{\NQ{M}},Q_\NQ{M})$ over a manifold $M$.
\begin{enumerate}
    \item For all  fixed $t \in \mathbb R$, the vector field $Y$ admits a time-$t$ flow $\Phi_t^Y\colon \NQ{M} \to  \NQ{M}$ if and only if the induced vector field $\underline{Y} $ on $M$ admits a time-$t$ flow.
    \item Assume $Y$ is closed, \emph{i.e.}~$[Y,Q]=0$. Then, for any admissible $t$, the flow $\Phi_t^Y:\NQ{M}\to \NQ{M} $ is an NQ-morphism. 
    \item Assume $Y$ is exact, \emph{i.e.~}there exists $X$ such that $[X,Q]=Y$.  Then there exists an NQ-morphism
$$\Phi^{X} : T[1]\mathbb R\times \NQ{M} \to \NQ{M}, $$ 
maybe defined only in a neighborhood of $\{0\} \times \NQ{M} $,  such that for all admissible $t\in \mathbb R$, the restriction $\Phi^X_t:\NQ{M}\to \NQ{M}$ is the flow of $Y$ at time $t$. In particular all $\Phi_t^Y$ are homotopic NQ-morphisms. 
\end{enumerate}
\end{lemma}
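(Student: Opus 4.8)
The plan is to treat the three claims in turn, using throughout that a degree-zero vector field $Y$ is a degree-zero derivation of $\Gamma(S(V))$ whose restriction to $\Gamma(S^0(V))=\mathcal C^\infty_M$ is exactly $\underline Y$. For the first claim, the forward implication is immediate: if the flow $\Phi_t^Y$ exists as an endomorphism of $\Gamma(S(V))$, then its restriction to $\mathcal C^\infty_M$ is the time-$t$ flow of $\underline Y$. For the converse I would construct $\Phi^Y_t$ by solving the flow equation $\frac{d}{dt}\Phi^Y_t=\Phi^Y_t\circ Y$ with $\Phi^Y_0=\mathrm{id}$ on generators. In a local trivialization with base coordinates $x$ and fiber coordinates $\xi^a$ of internal degree $\deg a\in\{1,\dots,d\}$, a degree-zero $Y$ reads $Y=Y^\mu(x)\,\partial_{x^\mu}+\sum_a Y^a(x,\xi)\,\partial_{\xi^a}$, where $Y^a$ is a polynomial in the $\xi$'s of internal degree $\deg a$. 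The crucial point is that positivity and boundedness of the grading force a triangular structure: a degree-zero monomial $\xi^{b_1}\cdots\xi^{b_k}\partial_{\xi^a}$ satisfies $\sum_i\deg b_i=\deg a$, so for $k\geq 2$ all $\deg b_i<\deg a$. Hence the flow equation for the degree-$\deg a$ generators is an inhomogeneous linear ODE whose coefficients are $Y^a_b(x(t))$ and whose source is built from already-solved strictly-lower-degree generators. Solving degree by degree, each such linear ODE has a solution on the whole interval on which the base flow $x(t)$ of $\underline Y$ exists, and uniqueness of ODE solutions guarantees that the assignment on generators patches to a genuine algebra morphism. The main obstacle here is precisely this point: $Y$ need not be fibrewise linear, so one must exploit the positive grading to reduce to (inhomogeneous) \emph{linear} equations rather than truly nonlinear ones, which would risk blow-up in finite time.

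For the second claim I would argue by differentiating a conjugation. Set $G(t):=\Phi^Y_t\circ Q\circ(\Phi^Y_t)^{-1}$; since $\frac{d}{dt}\Phi^Y_t=\Phi^Y_t\circ Y$ and correspondingly $\frac{d}{dt}(\Phi^Y_t)^{-1}=-Y\circ(\Phi^Y_t)^{-1}$, and since $Y$ has degree zero so that its graded bracket with $Q$ is the ordinary commutator, one obtains $\frac{d}{dt}G(t)=\Phi^Y_t\circ[Y,Q]\circ(\Phi^Y_t)^{-1}=0$ by hypothesis. As $G(0)=Q$, it follows that $\Phi^Y_t\circ Q=Q\circ\Phi^Y_t$, i.e. $\Phi^Y_t$ is an NQ-morphism. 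This step is essentially formal.

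For the third claim, note first that $Y=[X,Q]$ is automatically closed: with $X$ and $Q$ both odd one has $Y=XQ+QX$, whence $[Y,Q]=QXQ-QXQ=0$ using $Q^2=0$; so by the second claim every $\Phi^Y_t$ is already an NQ-morphism. I would then write down the homotopy explicitly as
$$\Phi^*(a)\ :=\ \Phi^Y_t(a)\ +\ dt\wedge\Phi^Y_t\big(X(a)\big),\qquad a\in\Gamma(S(V)),$$
viewed as a map $(\Gamma(S(V)),Q)\to(\Gamma(S(V))\,\tilde\otimes\,\Omega^\bullet_{\mathbb R},\,Q+{\rm d}_{dR})$. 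That $\Phi^*$ is an algebra morphism follows because $\Phi^Y_t$ is an automorphism, $X$ is a degree $-1$ derivation, and $dt\wedge dt=0$, the Koszul signs matching precisely the Leibniz rule for $X$. That $\Phi^*$ intertwines the differentials splits into two components: the $dt$-free part is the identity $\Phi^Y_t\circ Q=Q\circ\Phi^Y_t$ of the second claim, while the $dt$-component reduces to $\frac{d}{dt}\Phi^Y_t(a)=\Phi^Y_t\big(X(Qa)\big)+Q\big(\Phi^Y_t(Xa)\big)$, which is exactly the flow equation $\frac{d}{dt}\Phi^Y_t=\Phi^Y_t\circ(XQ+QX)=\Phi^Y_t\circ Y$ after using the second claim once more to commute $Q$ past $\Phi^Y_t$. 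By construction the restriction of $\Phi^*$ at time $t$ is $\Phi^Y_t$; restricting $\Phi^*$ to any subinterval and rescaling then yields a homotopy between any two $\Phi^Y_{t_0}$ and $\Phi^Y_{t_1}$, in particular between $\mathrm{id}=\Phi^Y_0$ and $\Phi^Y_t$. The only genuine subtlety is to guess the homotopy operator $h_t=\Phi^Y_t\circ X$ and to keep careful track of the signs produced by the odd generator $dt$ and the odd derivations $Q$ and $X$.
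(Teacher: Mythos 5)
Your proposal is correct, and on the third item it follows essentially the paper's own proof: the paper defines the homotopy by $F\mapsto \Phi^Y_t(F)\otimes 1 + (X\circ\Phi^Y_t)(F)\otimes dt$ and checks the intertwining of $Q$ with $Q+{\rm d}_{dR}$ via the flow equation $\tfrac{\partial}{\partial t}\Phi^Y_t=[X,Q]\circ\Phi^Y_t=(X\circ\Phi^Y_t)\circ Q+Q\circ(X\circ\Phi^Y_t)$, which is exactly your computation. The one (harmless) discrepancy is that your homotopy operator is $\Phi^Y_t\circ X$ where the paper uses $X\circ\Phi^Y_t$; since $Y$ is autonomous one has $\tfrac{d}{dt}\Phi^Y_t=Y\circ\Phi^Y_t=\Phi^Y_t\circ Y$, so both insertions yield valid (in general distinct, unless $[X,Y]=0$) homotopies --- yours transports $X$ along the flow, the paper's does not. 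Where you genuinely diverge is on items 1 and 2: the paper simply cites Chapter 5 of \cite{MR2102797} (remarking that the supermanifold arguments carry over verbatim), whereas you prove them directly. Your triangularity argument for item 1 --- a degree-zero monomial $\xi^{b_1}\cdots\xi^{b_k}\partial_{\xi^a}$ with $k\geq 2$ only involves generators of degree strictly below $\deg a$ (and $k=0$ is excluded since all generators have positive degree), so the flow equation reduces degree by degree to inhomogeneous \emph{linear} ODEs along the base flow, which cannot blow up before $\underline{Y}$ does --- is precisely the mechanism that makes the ``if and only if'' true, and your conjugation argument $G(t)=\Phi^Y_t\circ Q\circ(\Phi^Y_t)^{-1}$ for item 2 is standard and sound (one should merely record that $\Phi^Y_t$ is invertible onto its image with inverse $\Phi^Y_{-t}$ on the appropriate domain, and that patching the chart-by-chart ODE solutions into a global algebra morphism uses uniqueness, a detail you compress into one clause). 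Your observation that $Y=[X,Q]$ is automatically closed, so that item 2 applies within item 3, is used implicitly but not spelled out in the paper. In short: same route as the paper for the substantive third item, plus a self-contained replacement for the external citation covering the first two, which buys independence from \cite{MR2102797} at the modest cost of the domain and patching bookkeeping.
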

\begin{proof}
 For the two first points, see Chapter 5 in \cite{MR2102797} (that deals with super-manifolds, instead of graded manifolds, but the arguments can be repeated word by word).
    Let us prove the third item. Consider the following map  \footnote{As in Section 3.4.4 in \cite{LLS}, we implicitly consider elements of degree $k$ in $\sh{\NQ{M}} \tilde{\otimes} \Omega^\bullet_I $ as being elements of the form $ F_t \otimes 1 + G_t \otimes dt $ with $F_t,G_t \in \sh{}$ being elements of degree $k$ and $k-1$ respectively that depend smoothly on a parameter $t \in I$.}:
    \begin{align} \label{eq:intertwiner}\begin{array}{rcl} \sh{\NQ{M}}&\to& \sh{\NQ{M}} \, \tilde{\otimes}  \, \Omega(I) \\   F & \mapsto & \Phi_t^Y(F) \otimes 1 + X \circ \Phi_t^Y  (F)\otimes dt \end{array}
    \end{align}
    It is easily checked to be a graded algebra morphism.
    Furthermore, the differential equation 
    \begin{align*}
         \frac{\partial \Phi^Y_t }{\partial t}=Y\circ \Phi^Y_t
         =[X,Q] \circ  \Phi^Y_t=(X\circ  \Phi^Y_t)\circ Q + Q\circ (X\circ  \Phi^Y_t)
    \end{align*}
    implies that \eqref{eq:intertwiner} intertwines $Q$ and $Q+\rm d_{dR}$. This completes the proof.
\end{proof}

\subsubsection{Definition of homotopy groups of NQ-manifolds}

We summarize in this section several ideas coming from \v{S}evera, see \cite{1707.00265} and \cite{SeveraIntegration}. \\

 {Let $ \NQ{M} = (M, \sh{\NQ{M}},Q_\NQ{M})$ be an NQ-manifold.  Let  $ E_\bullet$ be a graded vector bundle over $M$ such that there exists an isomorphism $\sh{\NQ{M}}\cong \Gamma(\cdot, S^\bullet E_\bullet^*)$. For a manifold $\Sigma$,} an NQ-morphism $\Phi:T[1]\Sigma\to \NQ{M}$ is entirely described by

\begin{equation}  \Phi: (\NQ{M}, Q) \mapsto (\Omega^\bullet_\Sigma,{\rm d}_{dR}).\end{equation}
The latter is entirely described by:
\begin{enumerate}
	\item[$\bullet$] its base map, which is a smooth map $\phi: \Sigma \to M$
	\item[$\bullet$] its Taylor coefficients $\phi_n \colon \wedge^n T\Sigma \to E_{-n} $.
\end{enumerate}	 
 {such that the induced map $(\Gamma(S^\bullet E_\bullet^*), Q_{\NQ{M}}\to (\Omega^\bullet_\Sigma, \rm d_{dR}) $ is a chain map.}
\begin{example}
\label{ex:constant}
\normalfont {
A map $T[1]\Sigma\to \mathcal M$ is called constant, if it factors through the point NQ-manifold $(*, \mathbb R, 0)$. This means that the base map is constantly some point $m\in M$ and all Taylor coefficients are equal to zero. Such a constant map will be denoted by $\underline{m}$.}
\end{example}

Let us now define the homotopy and fundamental groups based at a point $m\in M$.
  Let $N$ be the north pole of the sphere $S^n$. Consider the set of homotopy classes of maps from $T[1]S^n$ to  $\NQ{M}$ whose restriction to a neighborhood of the north pole is constantly $ m$. As for usual manifolds, this set has a group structure, and this group is Abelian for all $n \geq 2$. For $ n \geq 2$, it is referred to as the \emph{$n$-th homotopy group} based at $m$ and denoted by $\pi_n(\NQ{M},m) $. For $n=1$, we call it the \emph{fundamental group of $(\NQ{M}, Q) $} based at $m \in M$. 
  Consider now the set of homotopy classes of \emph{paths}, (\emph{i.e.}~NQ-morphisms from $T[1]I$ to $ \NQ{M}$) valued in $\NQ{M}$ which are constant near $0$ and $1$. This set comes equipped with a natural groupoid structure over $M$, referred to as the \emph{fundamental groupoid of $\NQ{M} $} and denoted by $\mathbf \Pi(\NQ{M}) \rightrightarrows M$.  {When $M$ is a manifold, the NQ-manifold homotopy groups and fundamental groupoid of $T[1]M$ coincide with the classical homotopy groups and fundamental groupoid of the manifold $M$.}

\begin{proposition}
Let $\NQ{M}$	 and  $ \NQ{M}'$ be
two NQ-manifolds.
\begin{enumerate}
    \item An NQ-morphism $\Phi \colon   \NQ{M} \to \NQ{M}'$ with base morphism $ \phi$ induces a group morphism $\pi_n( \NQ{M},m) \to \pi_n( \NQ{M}',\phi(m)) $ for all $ m \in M, n \in \mathbb N$ and a groupoid morphism $ \mathbf \Pi(\NQ{M}) \to  \mathbf \Pi(\NQ{M}')$. 
    \item Two homotopic NQ-morphisms $\Phi_0,\Phi_1$ over the same base map $\phi$ induce the same group and groupoid morphisms.
\end{enumerate}
\end{proposition}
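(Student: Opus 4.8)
The plan is to define both induced maps by \emph{post-composition} and to verify everything by functoriality, the only genuinely delicate point being the preservation of base points in part (2).

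For part (1), recall that a map $\Psi\colon S^n \to {\mathbb V}$ is a differential graded algebra morphism $\Psi^*\colon (\Gamma(S(V)),Q)\to (\Omega^\bullet_{S^n},{\rm d}_{dR})$, and that an NQ-morphism $\Phi\colon {\mathbb V}\to{\mathbb V}'$ is such a morphism $\Phi^*\colon \Gamma(S(V'))\to\Gamma(S(V))$. I would set $\Phi_*[\Psi] := [\Phi\circ\Psi]$, where $(\Phi\circ\Psi)^* := \Psi^*\circ\Phi^*$. Being a composite of differential graded algebra morphisms, $\Phi\circ\Psi$ is again a map $S^n\to{\mathbb V}'$, and likewise $\Phi\circ\gamma$ is a path in ${\mathbb V}'$ for every path $\gamma$. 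The base-point condition is preserved because post-composing a constant map yields a constant map, $\Phi\circ\underline{m}=\underline{\phi(m)}$: indeed $\underline m^{*}$ evaluates degree-zero elements at $m$ and kills everything of positive degree, so $\underline m^{*}\circ\Phi^{*}$ evaluates at $\phi(m)$ and kills positive degrees, which is exactly $\underline{\phi(m)}^{*}$. Hence $\Phi\circ\Psi$ is again constant near the north pole, and $\Phi\circ\gamma$ is again constant near $0$ and $1$, its source and target now read off through $\phi$.

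Well-definedness on homotopy classes and compatibility with the algebraic structure are then purely formal. A (based) homotopy $H\colon I\times S^n\to{\mathbb V}$ between $\Psi_0$ and $\Psi_1$ post-composes to a homotopy $\Phi\circ H\colon I\times S^n\to{\mathbb V}'$ between $\Phi\circ\Psi_0$ and $\Phi\circ\Psi_1$, again constant near the north pole by the computation above applied slice-wise; thus $\Phi_*$ descends to homotopy classes. The group operation on $\pi_n$ and the concatenation of $\mathbf\Pi({\mathbb V})$ are defined entirely on the source $S^n$ (via the pinch map) and on the interval $I$, with no reference to ${\mathbb V}$ or ${\mathbb V}'$; since post-composition by $\Phi$ commutes with any such source-side operation, $\Phi\circ(\Psi_0\ast\Psi_1)=(\Phi\circ\Psi_0)\ast(\Phi\circ\Psi_1)$, so $\Phi_*$ is a group morphism and $\mathbf\Pi(\Phi)$ a groupoid morphism (the smoothness of concatenation having been arranged by the rescaling discussed above). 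This settles (1).

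For part (2), let $\mathcal H\colon I\times{\mathbb V}\to{\mathbb V}'$ be a homotopy with $\mathcal H_0=\Phi_0$, $\mathcal H_1=\Phi_1$ and base map the constant $\phi$. Given a based map $\Psi\colon S^n\to{\mathbb V}$, I would form
\[ G \;:=\; \mathcal H\circ(\mathrm{id}_I\times\Psi)\colon\; I\times S^n \longrightarrow {\mathbb V}', \]
whose restrictions to the endpoints of $I$ are $\Phi_0\circ\Psi$ and $\Phi_1\circ\Psi$. The content of (2) is that $G$ is a \emph{based} homotopy, and this is the main obstacle. Writing $\mathcal H^{*}(\alpha)=A_t(\alpha)\otimes 1+B_t(\alpha)\otimes dt$, near the north pole $\Psi=\underline m$, so on a neighbourhood of $I\times\{N\}$ the homotopy $G$ is pulled back from the ``base-point path'' $c:=\mathcal H\circ(\mathrm{id}_I\times\underline m)\colon I\to{\mathbb V}'$, with $c^{*}(\alpha)=B_t(\alpha)(m)\,dt$ on degree-one $\alpha$. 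Because $\Phi_0$ and $\Phi_1$ share the base map $\phi$, every time-slice $c_t=\mathcal H_t\circ\underline m$ equals the point $\underline{\phi(m)}$; but, exactly as warned in the Remark above, this alone does not make $c$ the constant path. The crux is therefore to kill the $dt$-component $\alpha\mapsto B_t(\alpha)(m)$. The intertwining of $Q'$ with $Q+{\rm d}_{dR}$ by $\mathcal H$ already forces, on exact degree-one functions, $B_t(Q'f)(m)=\partial_t A_t(f)(m)=\partial_t\big(f(\phi(m))\big)=0$, using the constancy of the base map; the remaining step — propagating this to all degree-one functions, equivalently showing that the loop $c$ is the constant path (or null-homotopic, after which a change-of-base-point correction removes it) — is where the ``same base map $\phi$'' hypothesis is essential. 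Once $c$ is trivial, $G$ is constant $\underline{\phi(m)}$ near the north pole, hence a based homotopy, and $\Phi_{0*}=\Phi_{1*}$ on $\pi_n$. The statement for $\mathbf\Pi$ is proved identically with $S^n$ replaced by $I$, checking that the two endpoint base-point paths are constant so that $\mathcal H\circ(\mathrm{id}_I\times\gamma)$ is a homotopy relative to the endpoints.
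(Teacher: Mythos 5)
Your part (1) coincides with the paper's proof, which is exactly the push-forward construction $\sigma\mapsto\Phi\circ\sigma$: the paper observes in one line that it ``transforms a map constantly $m$ to a map constantly $\phi(m)$'' (your computation $\underline{m}^{*}\circ\Phi^{*}=\underline{\phi(m)}^{*}$ makes this explicit) and that it respects concatenation; that part of your proposal is complete and matches the paper. For part (2) the paper likewise forms $\mathcal H\circ(\mathrm{id}_I\times\sigma)$ and simply asserts that it is a homotopy relating $\Phi_0\circ\sigma$ and $\Phi_1\circ\sigma$, i.e.\ it silently treats this composition as a \emph{based} homotopy.

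It is precisely there that your proof stops short, and the gap is real. You correctly isolate the crux --- $G=\mathcal H\circ(\mathrm{id}_I\times\Psi)$ is a priori only a free homotopy, whose trace at the north pole is the path $c=\mathcal H\circ(\mathrm{id}_I\times\underline{m})$, and slice-wise constancy $c_t=\underline{\phi(m)}$ does not make $c$ constant, as the paper's own Remark warns --- but you never establish that $c$ is constant or null-homotopic; you only write ``once $c$ is trivial\ldots''. This missing step cannot be supplied from the stated hypotheses: the $dt$-component $\alpha\mapsto B_t(\alpha)(m)$ of $c$ is exactly an isotropy path (compatibility with $Q'$ on degree-zero functions forces its anchor image to vanish, since the base point does not move), and it can be arbitrary. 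Concretely, take $\mathbb V=\mathbb V'=\mathfrak g$ a Lie algebra over a point, where ``same base map'' and even your additional (unstated in the proposition) assumption that $\mathcal H$ has constant base map are automatic: a homotopy is a family of morphisms $A_t$ satisfying a gauge equation driven by $B_t\in\mathfrak g'$, so $\Phi_1=\mathrm{Ad}_g\circ\Phi_0$ with $g\in\mathbf G'$ the holonomy of $c$; on $\pi_1(\mathfrak g,pt)=\mathbf G$ (Example \ref{ex:LieAlgebra}) the induced morphisms then differ by conjugation by $g$ and are not equal when $g$ fails to be central. What the post-composition argument genuinely yields is $\Phi_{1*}=[c]_{\#}\circ\Phi_{0*}$, equality up to the change-of-base-point action of the trace loop --- enough for Corollary \ref{cor:homotopyGroups}, but weaker than the literal claim; to get equality on the nose one must strengthen the hypothesis so that $c$ is forced to be the constant path (e.g.\ require the homotopy to be constant over $m$, or work up to this action). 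In short: you follow the same route as the paper, part (1) is complete, and part (2) is incomplete at exactly the point where the paper's own one-line proof is also silent --- your analysis in fact exposes that imprecision rather than resolving it.
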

\begin{proof}
We prove it for homotopy groups: the proof for the fundamental groupoid is similar.
The group and groupoid morphisms in the first item are simply induced by the push-forward:
 $$ \begin{array}{rcl} \{ \hbox{Map}( T[1]S^n, \NQ{M} ) \}& \longrightarrow & \{ \hbox{Map}( T[1]S^n, \NQ{M}') \}, \\ \sigma &  \mapsto & \Phi \circ \sigma \end{array} $$
 which transforms a map constantly $m$  to a map constantly $\phi(m)$ in a neighborhood of the north pole. 
 For the second item, it suffices to check that for $\Phi \colon T[1]I \times \NQ{M} \to \NQ{M}' $ a homotopy between $ \Phi_0$ and $\Phi_1$, the composition $  \Phi \circ ({\mathrm{id}} \times \sigma)\colon T[1]I \times T[1]S^n \to \NQ{M}'  $ is for every  $ \sigma \in \hbox{Map}( T[1]S^n, \NQ{M} )  $ a homotopy relating $ \Phi_0 \circ \sigma$ and $ \Phi_1 \circ \sigma$.
\end{proof}

Here is an important consequence of this proposition.

\begin{corollary}
\label{cor:homotopyGroups}
The homotopy groups, fundamental groups and fundamental groupoids of any two homotopy equivalent NQ-manifolds are isomorphic.	
\end{corollary}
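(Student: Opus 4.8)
The plan is to deduce the statement from the two parts of the preceding Proposition, which together say that the assignment $\Phi \mapsto \Phi_*$ sending an NQ-morphism to its action on homotopy groups (respectively fundamental groups and groupoids) is functorial and homotopy invariant. Recall that $\mathbb{V}$ and $\mathbb{V}'$ being \emph{homotopy equivalent} means there exist NQ-morphisms $\Phi \colon \mathbb{V} \to \mathbb{V}'$ and $\Psi \colon \mathbb{V}' \to \mathbb{V}$, with base maps $\phi$ and $\psi$, such that $\Psi \circ \Phi$ is homotopic to $\mathrm{id}_{\mathbb{V}}$ and $\Phi \circ \Psi$ is homotopic to $\mathrm{id}_{\mathbb{V}'}$. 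The whole point is that the two properties of the Proposition are precisely what is needed to turn such a pair into mutually inverse isomorphisms on the invariants.

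First I would record that the pushforward $\sigma \mapsto \Phi \circ \sigma$ used in the proof of the Proposition is strictly functorial: for composable NQ-morphisms one has $(\Psi \circ \Phi) \circ \sigma = \Psi \circ (\Phi \circ \sigma)$, so $(\Psi \circ \Phi)_* = \Psi_* \circ \Phi_*$ on every $\pi_n$ and on $\mathbf{\Pi}$, while $(\mathrm{id}_{\mathbb{V}})_*$ is the identity since precomposition by the identity morphism leaves every map unchanged. Combining this with homotopy invariance (the second item of the Proposition), the homotopy $\Psi \circ \Phi \simeq \mathrm{id}_{\mathbb{V}}$ would give $\Psi_* \circ \Phi_* = \mathrm{id}$, and symmetrically $\Phi_* \circ \Psi_* = \mathrm{id}$. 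Hence $\Phi_*$ is a group isomorphism $\pi_n(\mathbb{V},m) \to \pi_n(\mathbb{V}',\phi(m))$ with inverse $\Psi_*$, and the identical reasoning yields the asserted isomorphisms of fundamental groups ($n=1$) and of fundamental groupoids.

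The delicate point, which I expect to be the main obstacle, is that the second item of the Proposition is stated for two morphisms sharing the \emph{same} base map, whereas $\Psi \circ \Phi$ and $\mathrm{id}_{\mathbb{V}}$ a priori have the distinct base maps $\psi \circ \phi$ and $\mathrm{id}_M$. To bridge this I would argue as in ordinary homotopy theory: the homotopy $I \times \mathbb{V} \to \mathbb{V}$ restricts on the base to a smooth homotopy from $\psi \circ \phi$ to $\mathrm{id}_M$, which drags the basepoint $m$ along a path $\gamma$ in $M$; composing the homotopy with $\mathrm{id} \times \sigma$, exactly as in the proof of the Proposition, then shows that $(\Psi \circ \Phi)_*$ differs from the identity-induced map only by a change-of-basepoint isomorphism along $\gamma$. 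One therefore needs, as a preliminary, the analogue for NQ-manifolds of the classical change-of-basepoint isomorphism $\pi_n(\mathbb{V},m_0) \cong \pi_n(\mathbb{V},m_1)$ attached to a path, established verbatim as for topological spaces by concatenating the path with based spheres.

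Granting that preliminary, the two composites $\Psi_* \circ \Phi_*$ and $\Phi_* \circ \Psi_*$ become isomorphisms rather than merely identities, which still forces $\Phi_*$ and $\Psi_*$ to be mutually inverse isomorphisms. Alternatively, if one works throughout with \emph{based} homotopy equivalences, i.e. homotopies that are constantly $m$ near the basepoint, then the relevant base maps agree near $m$, the second item of the Proposition applies directly, and this subtlety disappears. Either way the conclusion is the same, and the argument is uniform across homotopy groups, fundamental groups, and fundamental groupoids.
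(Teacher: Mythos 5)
Your argument is correct and is essentially the paper's own (implicit) proof: the paper states this corollary with no proof at all, treating it as an immediate consequence of the preceding Proposition, and the deduction you spell out --- strict functoriality of the pushforward $\sigma \mapsto \Phi\circ\sigma$ combined with homotopy invariance, turning the two homotopies $\Psi\circ\Phi \simeq \mathrm{id}_{\mathbb V}$ and $\Phi\circ\Psi \simeq \mathrm{id}_{\mathbb V'}$ into mutually inverse maps on $\pi_n$ and on $\mathbf\Pi$ --- is exactly that intended argument. Your caveat about item 2 of the Proposition being stated only for morphisms over the \emph{same} base map is a legitimate refinement that the paper glosses over; note that in the paper's actual use of this corollary (comparing universal NQ-manifolds of a singular foliation, where by Theorem \ref{theo:onlyOne} all morphisms and homotopies are over the identity of $M$) your second resolution applies verbatim and the change-of-basepoint machinery is not needed.
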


The following lemma will also be of interest:

\begin{lemma}
\label{lem:restrLeaf}
Let $ L$ be the leaf through $m$ of an NQ-manifold $\NQ{M}$.
Then the restriction $ {\mathfrak i}_L \NQ{M}$ of $ \NQ{M} $ to $L$ is an NQ-manifold and $ \pi_n(\NQ{M},m) = \pi_n({\mathfrak i}_L\NQ{M},m) $. Also, the {orbits} of the fundamental groupoid $ \mathbf \Pi (\NQ{M})$ are the leaves of the basic singular foliation
of $\NQ{M} $.
\end{lemma}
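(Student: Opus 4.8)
The plan is to prove Lemma \ref{lem:restrLeaf} in three movements: first establish that $\mathfrak{i}_L \mathbb{V}$ is a well-defined NQ-manifold, then prove the isomorphism of homotopy groups, and finally identify the basic singular foliation of the fundamental groupoid.

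\textbf{Step 1: Restriction to the leaf is an NQ-manifold.} The leaf $L$ through $m$ is, by definition, the leaf of the basic singular foliation $\mathcal{F} = \rho(\Gamma(E_{-1}))$ of $\mathbb{V}$. The key point is that $L$ is an integral submanifold of the anchor image, so the restriction $V_\bullet|_L$ of the graded vector bundle makes sense, and I must check that $Q$ descends to a derivation $\mathfrak{i}_L Q$ of $\Gamma(S(V|_L))$. I would argue this by showing that the ideal $\mathcal{I}_L \subset \mathcal{C}^\infty_M$ of functions vanishing on $L$ generates a $Q$-stable ideal in $\Gamma(S(V))$. Since the anchor $\rho$ takes values tangent to $L$ (as $L$ is a leaf of $\mathcal{F}$), the induced vector field $\underline{Q}$ on $M$ — which is zero because $Q$ has degree $+1$ and acts trivially on degree-zero functions in a way that... here one must be careful — the relevant fact is that the action of $Q$ on $\mathcal{C}^\infty_M$ lands in degree-one elements contracted against sections whose anchor is tangent to $L$. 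Concretely, $Q$ restricted to functions factors through $\rho^*$, so $Q(\mathcal{I}_L) \subseteq \mathcal{I}_L \cdot \Gamma(V) + (\text{terms in } \mathcal{I}_L)$, giving $Q$-stability of the ideal and hence a well-defined quotient differential $\mathfrak{i}_L Q$.

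\textbf{Step 2: Isomorphism of homotopy groups.} This is where I expect the main obstacle. A map $\sigma \colon S^n \to \mathbb{V}$ constantly $m$ near the north pole has base map $\phi \colon S^n \to M$; I claim $\phi$ automatically factors through $L$. The reason is that $S^n$ is connected and the base map of any \dg-morphism from a manifold must be a ``leafwise'' map: the Taylor coefficient $\phi_1 \colon T\Sigma \to E_{-1}$ composed with $\rho$ must equal $d\phi$ up to the differential condition, forcing $d\phi$ to take values in $\rho(E_{-1}) = \mathcal{F}$, i.e.\ tangent to the leaves. Since $\phi$ hits $m \in L$ near the north pole and $S^n$ is connected, the image of $\phi$ lies entirely in $L$. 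Consequently every based map (and every based homotopy, which is a map from $S^n \times I$) factors through $\mathfrak{i}_L \mathbb{V}$, and conversely every map to $\mathfrak{i}_L\mathbb{V}$ includes into $\mathbb{V}$. This gives a bijection on homotopy classes, which one checks is a group isomorphism by naturality of the group operation (concatenation/pinching), exactly as in Corollary \ref{cor:homotopyGroups}. The subtle part is verifying that $d\phi$ is forced to be leafwise; I would extract this from the compatibility of $\Phi^*$ with the differentials in lowest degree, reading off that $\rho \circ \phi_1 = d\phi$ on the nose.

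\textbf{Step 3: Basic singular foliation of the fundamental groupoid.} Here I would show that the induced foliation of $\mathbf{\Pi}(\mathbb{V}) \rightrightarrows M$ — meaning the foliation generated by the anchor/source-target data of the groupoid — coincides with $\mathcal{F}$. A path class in $\mathbf{\Pi}(\mathbb{V})$ based at $m$ has endpoints connected by a base path whose velocity lies in $\mathcal{F}$ by the same leafwise argument as in Step 2; thus the orbits of $\mathbf{\Pi}(\mathbb{V})$ on $M$ are exactly the leaves of $\mathcal{F}$, and differentiating the source map at the units recovers $\rho(\Gamma(E_{-1})) = \mathcal{F}$ as the induced foliation. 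The main conceptual load is in Step 2: once the leafwise-factoring of base maps is established, both the homotopy-group statement and the foliation identification follow by the same mechanism, so I would prove that factoring property as a standalone sublemma and invoke it twice.
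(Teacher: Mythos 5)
Your proposal is correct and follows essentially the same route as the paper, whose entire proof consists of the observation you isolate as your standalone sublemma: since compatibility of $\Phi^*$ with the differentials on degree-zero functions forces $\rho \circ \phi_1 = d\phi$, the base map of an NQ-morphism from a connected manifold cannot jump between leaves, and all three assertions follow (your Step 1 ideal-stability argument, via $Q|_{\mathcal C^\infty_M} = \rho^* \circ d$ and tangency of the anchor to $L$, is also sound). The one point you leave implicit — and which the paper suppresses as well — is the converse inclusion in Step 3, namely that each leaf lies in a single $\mathbf{\Pi}(\mathbb V)$-orbit; this follows because a flow line $\gamma$ of $\rho(e)$, $e \in \Gamma(E_{-1})$, underlies an NQ-path with first Taylor coefficient $\phi_1(\partial_t) = e|_{\gamma(t)}$, the relation $\rho \circ \phi_1 = d\phi$ being the only constraint since $\Omega^{\geq 2}_I = 0$.
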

\begin{proof}
This first part of the lemma follows obviously from the fact that the base map of an NQ-manifold morphism from a connected manifold to $ \NQ{M}$ can not ``jump'' from one leaf to an other leaf, and has to be valued in a given $\NQ{M}$-leaf. For the second part, assume $\sh{\NQ{M}}=\Gamma(\cdot, S^\bullet E_\bullet^*)$. By the definition of leaves, there exists a path $\gamma:I\to M$ and a path $e:I\to E_{-1}$, such that $\dot\gamma(t)=\rho(e(t))$. The pair $(\gamma, e)$ is made of the base map and of the Taylor coefficient of an NQ-manifold morphism $T[1]I\to \mathcal M$.
\end{proof}

Altogether,  homotopy groups $ (\pi_n(\NQ{M}),m)_{m \in M}$ of an NQ-manifold $ \NQ{M}$ with base manifold $M$ form a bundle of groups over $M$, that we call the \emph{$n$-th homotopy group bundle}. 

\begin{definition}
A section $ \sigma : m \mapsto \sigma(m) \in \pi_n(\NQ{M}, m) $ 
 of the $n$-th homotopy group bundle is said to be \emph{smooth} there exists an NQ-morphism  $M \times S^n \to \NQ{M}$ whose restriction to $ \{m\} \times S^n$ is a representative of $ \sigma(m) $ for all $m \in M$. 
  We denote these smooth sections by $\Gamma(\pi_n(\NQ{M})) $. 
 \end{definition}
 
For the fundamental groupoid, smooth sections of the source map, and \emph{smooth bisections}, are defined in the same way.\\

Let us spell out the special case $n=0$. We define $\pi_0(\NQ{M},m)=M/\NQ{M}$ to be the pointed set $M/\NQ{M} $  of $\NQ{M}$-orbits, the orbit of $m$ being the marked point. Also: 

\begin{definition}
\label{def:pi0}
We define $\Gamma(\pi_0(\NQ{M})) $ to be the group ${\mathrm{Diff}}(M/\NQ{M}) $ of bijections of the orbit space $M/\NQ{M} $ that come from a smooth diffeomorphism of $M$ mapping every $\NQ{M}$-leaf to an $\NQ{M}$-leaf. In particular,  $\Gamma(\pi_0(\NQ{M}))$ has a natural group structure.
\end{definition}

\subsection{A long exact sequence for NQ-manifold fibrations }

\subsubsection{NQ-manifold fibrations and Ehresmann connections}

Throughout this section:
\begin{enumerate}
    \item $(E_\bullet ,\{l_k\}_{k\geq 1},\rho_E)$ is a Lie $\infty$-algebroid with associated NQ-manifold $ \NQ{M}= (M, \sh{\NQ{M}},Q_\NQ{M})$.
    \item $B \to L$ is a Lie algebroid with anchor $\rho_B:B\to TL$, seen as an NQ-manifold $B[1]$ when equipped with functions $ \Gamma(\wedge^\bullet B^*)$ and Chevalley-Eilenberg differential (see \cite{MR2157566}, Chapter 7). 
    \item  $ P : \NQ{M} \to B[1]$ is an NQ-manifold morphism. 
For degree reasons, all its Taylor coefficients are zero except for the first one, so that $P$ is entirely described by a vector bundle morphism denoted by $P$:
\begin{equation}
\label{eq:shortExact} \xymatrix{ E_{-1}\ar[r]^{P} \ar[d] & B  \ar[d] \\  M \ar[r]^p & L} 
\end{equation}
such that  $ \wedge^\bullet P^* \colon \Gamma( \wedge^\bullet B^*  )\to \Gamma( \wedge^\bullet E^*_{-1} ) \subset \Gamma(S^\bullet(E^*_\bullet)) $  is a differential graded commutative algebra morphism.
\end{enumerate} 

\begin{definition}
\label{def:fibrations} 
When $P \colon E_{-1} \to B$ is a surjective submersion, \emph{i.e.~}the base map $ p\colon M \to L$ is a surjective submersion and $P_m : E_{-1}|_{m} \to  B|_{p(m)} $ is a surjective linear map for every $m \in M$, we say that $P$ is an
NQ-manifold fibration over a Lie algebroid.
\end{definition}

\begin{example}
\normalfont
When $\NQ{M}=T[1]M $ and $B=TL$ are manifolds,  we recover usual manifold fibrations. When both $\NQ{M}=\mathfrak g[1] $ and $B=\mathfrak h$ (i.e. $L=*$) are Lie algebras, we recover Lie algebra epimorphisms.
\end{example}

Let us define the fibers of an NQ-manifold fibration. 
Consider the vector bundles over $M$ defined by $ K_{-i} = E_{-i}$ for all $i \geq 2$ and $K_{-1} := {\mathrm{Ker}}(P) $.
The Lie $\infty$-algebroid brackets of $E$ restrict to sections of $ K_\bullet \to M$, and therefore equip the latter with a Lie $\infty$-algebroid structure.
We denote by $ {\NQ{T}}$ its associated NQ-manifold. Since the anchor  $ {\NQ{T}}$ is by construction valued in the fibers of $p \colon M \to L$, it restricts to an NQ-manifold $ {\NQ{T}}_\ell$ on the fiber $p^{-1}(\ell)  $ over any $\ell \in L$.
We call the NQ-manifold $ {\NQ{T}}_\ell$ the \emph{fiber over $\ell \in L$}.

\begin{example}
\normalfont
When both $\NQ{M} $ and  $B$ are manifolds, we recover usual  fibers of a manifold fibration.
When both $\NQ{M} $ and $B$ are Lie algebras, the fiber is simply the kernel of the Lie algebra epimorphism $P$.
\end{example}

\begin{example}
\normalfont
Lie algebroid actions on $N$-manifolds, as defined in \cite{BrahicZambon}, provide a wide class of examples of Lie algebroid fibrations.
\end{example}

\begin{definition}
An Ehresmann connection $(E_\bullet, H)$ for an NQ-manifold fibration over a Lie algebroid $ P \colon \NQ{M} \to B$ is  { a choice of $E_\bullet$ such that $\sh{\NQ{M}}=\Gamma(\cdot, S^\bullet E_\bullet^*)$ and} a sub-bundle $ H \hookrightarrow E_{-1} $ in direct sum with $K_{-1}={\mathrm{Ker}}(P) \subset E_{-1}$.
\end{definition}

An Ehresmann connection induces a \emph{horizontal lift} $ \tilde{H}\colon \Gamma(B) \to \Gamma(E_{-1})$: A section $b \in \Gamma(B)$ is lifted to the unique section of $H$ which is $P$-related to $b$. 
For every section $ b \in \Gamma (B)$ the vector field $ \rho_{\NQ{M}} \circ \tilde{H} \, (b)$ is $p$-related to the vector field $ \rho_B (b)$.

\begin{definition}
\label{def:Ehresmann}
An Ehresmann connection for an  NQ-manifold fibration over a Lie algebroid $ P \colon \NQ{M} \to B[1]$  as above  is said to be \emph{complete} if for all $ b \in \Gamma(B) $ the integral curve of the vector field $ \rho_\NQ{M} \circ \tilde{H} (b) $ starting at $m$ is defined if and only if  the integral curve of the vector field $ \rho_B (b)$ starting at $ p(m)$ is defined.
\end{definition}

\begin{remark}
\normalfont
Of course, every NQ-manifold fibration over a Lie algebroid  admits Ehresmann connections, but there may not exist complete Ehresmann connections. For instance when $B$ is transitive and the fibers $ (\NQ{T}_{\ell})_{\ell \in L}$ over two points $\ell,\ell' \in L$ are not diffeomorphic, Proposition \ref{prop:fibersAreIso} below obstructs the existence of complete Ehresmann connections.
\end{remark}

\begin{example}
\normalfont
When $\NQ{M} $ is  a manifold $M$, we recover usual (complete) Ehresmann connections for manifold fibrations.
When both $\NQ{M}=\mathfrak g[1]$ and $B=\mathfrak h$ are Lie algebras, a connection is a linear section of the epimorphism $P$ and every Ehresmann connection is complete.
\end{example}

\begin{remark}
\normalfont
When $\NQ{M}$ is a Lie algebroid, our objects match those introduced in  \cite{MR3897481} and \cite{Zhu}, but with several differences in vocabulary. NQ-manifold fibrations do not coincide with Lie algebroid fibrations in \cite{Zhu} since they assume that the Lie algebroid fibration comes with a complete Ehresmann connection. The correspondence is ``Lie algebroid fibration in \cite{Zhu}''= ``NQ-manifold fibration in our sense with $\NQ{M}$ a Lie algebroid'' + ``a complete Ehresmann connection''.
Also,  ``Ehresmann connections'' in \cite{Zhu}  are always complete, and therefore correspond to our ``complete Ehresmann connections''.
When $\NQ{M}=\mathfrak g[1]$ is a Lie algebroid and $B$ is transitive, our NQ-manifold fibrations are the Lie algebroid submersions of \cite{MR3897481} and ``complete Ehresmann connections'' have the same meaning.
\end{remark}

Sections of $\Gamma(B) $ (resp. $E_{-1}$) can be considered as degree $-1$ vector fields on the NQ-manifold $B[1]$ 
(resp. $\NQ{M}$): it suffices to let $ b \in \Gamma(B) $ (resp. $e \in \Gamma(E_{-1})$) act by contraction $ {\mathfrak i}_b$ (resp. $ {\mathfrak i}_e$) on their respective graded algebras of functions. 
The degree zero vector fields $ [ {\mathfrak i}_{\tilde{H}(b)}, Q_{\NQ{M}}]$ and  $[ {\mathfrak i}_{b}, {\mathrm d}_{B}] $ are $P$-related for every Ehresmann connection $H$. If $H$ is complete, then their flows are also related:
 
\begin{proposition}
\label{prop:aboutComplete}
Consider an Ehresmann connection $(E_\bullet, H)$ for an NQ-manifold fibration $ P \colon \NQ{M} \to B[1]$ over a Lie algebroid. The following are equivalent:
\begin{enumerate}
    \item The Ehresmann connection $(E_\bullet, H)$ is complete. 
    \item For all $ b \in \Gamma(B)$, the time $t$ flow of the degree $0$ vector field $ [ {\mathfrak i}_{\tilde{H}(b)}, Q]$ is defined if and only if the time $t$ flow of the degree $0$ vector field $[ {\mathfrak i}_{b}, {\mathrm d}_{B}] $ is defined.
\end{enumerate}
\end{proposition}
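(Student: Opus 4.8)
The plan is to push both flow-existence statements down from the graded manifolds $\mathbb{V}$ and $B$ to their base manifolds $M$ and $L$ using Lemma \ref{lem:flow}, and then to recognise the resulting base-level condition as Definition \ref{def:Ehresmann}. The two degree zero vector fields $[\mathfrak{i}_{\tilde{H}(b)}, Q]$ and $[\mathfrak{i}_{b}, \mathrm{d}_B]$ are already known to be $P$-related, so the argument is really a bookkeeping of when their flows exist upstairs versus downstairs.

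First I would identify the vector fields induced on $M$ and $L$. For $e \in \Gamma(E_{-1}) = \Gamma(V_1^*)$, the contraction $\mathfrak{i}_e$ annihilates $\mathcal{C}^\infty_M = \Gamma(S^0(V))$ for degree reasons, so on a function $f$ one has $[\mathfrak{i}_e, Q](f) = \mathfrak{i}_e(Qf)$; since $Qf \in \Gamma(V_1)$ pairs with $e$ exactly through the anchor, this equals $\rho_{\mathbb{V}}(e)[f]$. Hence the base vector field induced by $[\mathfrak{i}_{\tilde{H}(b)}, Q]$ is $\rho_{\mathbb{V}} \circ \tilde{H}(b)$, and the same computation for the Chevalley-Eilenberg differential gives that $[\mathfrak{i}_{b}, \mathrm{d}_B]$ induces $\rho_B(b)$ on $L$. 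These are precisely the two vector fields of Definition \ref{def:Ehresmann}.

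Next I would apply Lemma \ref{lem:flow}(1): for each fixed $t$, a degree zero vector field admits a time-$t$ flow if and only if its induced base vector field does. This turns condition (2) into the statement that, for all $b$, the time-$t$ flow of $\rho_{\mathbb{V}} \circ \tilde{H}(b)$ on $M$ exists if and only if that of $\rho_B(b)$ on $L$ exists. The implication (1)$\Rightarrow$(2) is then straightforward: since the two fields are $p$-related and $p$ is a surjective submersion, projecting integral curves shows that existence of the flow upstairs always forces existence downstairs, while completeness of $H$ supplies the reverse lifting of integral curves, so both directions of the flow equivalence hold.

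The step I expect to be the main obstacle is the converse (2)$\Rightarrow$(1), which amounts to reconciling the two formulations of ``the flow is defined''. Definition \ref{def:Ehresmann} is pointwise, phrased through integral curves issued from a given $m$ and its image $p(m)$, whereas Lemma \ref{lem:flow} speaks of global time-$t$ flows, and a global flow equivalence is a priori weaker than the pointwise matching of escape times. I would bridge this by exploiting the surjectivity of $p$ together with the linearity $\tilde{H}(fb) = (f\circ p)\,\tilde{H}(b)$ of the horizontal lift, rescaling $b$ by functions on $L$ so as to localise the flow condition around individual fibres and thereby recover the pointwise statement of Definition \ref{def:Ehresmann}. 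Making this localisation precise, while keeping track of the automatic $p$-related direction, is the delicate part that completes the equivalence.
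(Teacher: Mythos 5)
Your proposal is correct and, at its core, follows the same route as the paper's proof: identify the induced base vector fields of $[{\mathfrak i}_{\tilde{H}(b)},Q]$ and $[{\mathfrak i}_{b},{\mathrm d}_B]$ as $\rho_{\mathbb V}\circ\tilde{H}(b)$ and $\rho_B(b)$, invoke Lemma \ref{lem:flow}(1) to transfer both flow-existence conditions to the base manifolds, and conclude via Definition \ref{def:Ehresmann}. Your explicit degree computation $[{\mathfrak i}_e,Q](f)={\mathfrak i}_e(Qf)=\rho(e)[f]$ (using that ${\mathfrak i}_e$ kills degree-zero functions) is a correct justification of what the paper merely asserts. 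Where you genuinely go beyond the paper is the last step: the paper declares the conclusion ``a direct consequence of Definition \ref{def:Ehresmann}'', silently eliding exactly the mismatch you flag, namely that Lemma \ref{lem:flow}(1) concerns the time-$t$ flow as a globally defined algebra morphism $\Gamma(S(V))\to\Gamma(S(V))$, whereas Definition \ref{def:Ehresmann} is pointwise in $m$ and $p(m)$. Your repair is sound: the direction ``upstairs integral curve exists $\Rightarrow$ downstairs exists'' is automatic from $p$-relatedness, and for the converse, rescaling by a compactly supported $f\in\mathcal C^\infty(L)$ equal to $1$ near the compact curve segment $\{\phi_s^{\rho_B(b)}(p(m)):s\in[0,t]\}$ makes $\rho_B(fb)=f\,\rho_B(b)$ complete, so condition (2) applied to $fb$ gives a global upstairs flow of $(f\circ p)\,\rho_{\mathbb V}\tilde{H}(b)$ (using $\tilde{H}(fb)=(f\circ p)\tilde{H}(b)$); since the lifted curve through $m$ projects onto the unchanged downstairs curve, it stays in the region where $f\circ p=1$ and is therefore an integral curve of $\rho_{\mathbb V}\tilde{H}(b)$ itself on $[0,t]$. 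So your argument buys a rigorous bridge at the one point the paper compresses, at the cost of a localisation argument the paper evidently considered implicit in its (somewhat loose) phrasing of ``the flow is defined''.
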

\begin{proof}
By the first item in Lemma \ref{lem:flow}, the flow of a degree $0$ vector field exists if and only if the flow of its induced vector field on its base manifold exists. 
In our case, the induced vector fields of  $ [ {\mathfrak i}_{\tilde{H}(b)}, Q_{\NQ{M}}]$ and $[ {\mathfrak i}_{b}, {\mathrm d}_{B}] $ are 
$ \rho_\NQ{M} \circ \tilde{H} (b) $ and $ \rho_B (b) $ respectively. Hence the equivalence of both items is a direct consequence of Definition \ref{def:Ehresmann}.
\end{proof}

In differential geometry, complete Ehresmann connections allow to identify fibers with each other: the same occurs in the present context for transitive Lie algebroids. 

\begin{proposition}
\label{prop:fibersAreIso}
If a complete Ehresmann connection $(E_\bullet, H)$ for an NQ-manifold fibration over a transitive Lie algebroid $ P \colon \NQ{M} \to B[1]$ exists, then the fibers $ {\NQ{T}}_{\ell}$ and  $ {\NQ{T}}_{\ell'}$ over any two $\ell, \ell' \in L$ are diffeomorphic NQ-manifolds. 
\end{proposition}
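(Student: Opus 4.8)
The plan is to transport the fibers along the flows of horizontal lifts, exactly as one trivializes an ordinary fibration over a connected base using a complete Ehresmann connection. Fix a section $b \in \Gamma(B)$ and consider the degree zero vector field $Y_b := [\mathfrak{i}_{\tilde H(b)}, Q]$ on $\mathbb{V}$. First I would observe that $Y_b$ is automatically $Q$-closed: since $[Q,Q]=0$, the graded Jacobi identity forces $[[\mathfrak{i}_{\tilde H(b)},Q],Q] = 0$ (indeed $Y_b$ is even exact). Hence, whenever its time-$t$ flow exists, Lemma \ref{lem:flow}(2) guarantees that $\Phi^{Y_b}_t$ is an NQ-automorphism of $\mathbb{V}$. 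By Proposition \ref{prop:aboutComplete} together with completeness of $H$, the flow $\Phi^{Y_b}_t$ is defined precisely when the flow of $[\mathfrak{i}_b,\mathrm d_B]$ on $B$ is, which by Lemma \ref{lem:flow}(1) happens precisely when the flow $\psi_t$ of $\rho_B(b)$ on $L$ is defined.

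Next I would check that this NQ-automorphism descends to the fibers. Its base map is the flow $\varphi_t$ of $\rho_{\mathbb V}\circ\tilde H(b)$, which is $p$-related to $\rho_B(b)$; therefore $p\circ\varphi_t = \psi_t\circ p$, so $\varphi_t$ maps $p^{-1}(\ell)$ diffeomorphically onto $p^{-1}(\psi_t(\ell))$. Moreover, as recorded just before Proposition \ref{prop:aboutComplete}, the vector fields $Y_b$ and $[\mathfrak{i}_b,\mathrm d_B]$ are $P$-related, and for a complete connection so are their flows; in pullback terms $(\Phi^{Y_b}_t)^*\circ \wedge^\bullet P^* = \wedge^\bullet P^*\circ(\psi^B_t)^*$. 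Consequently $(\Phi^{Y_b}_t)^*$ preserves the ideal of $\Gamma(S(V))$ generated by the horizontal degree one functions $P^*\Gamma(B^*)$, which is exactly the kernel of the restriction $\Gamma(S(V))\to \Cinfty{\mathbb T}$ dual to $K_{-1}=\mathrm{Ker}(P)\hookrightarrow E_{-1}$. Hence $(\Phi^{Y_b}_t)^*$ passes to this quotient and yields an NQ-automorphism of $\mathbb T$ covering $\varphi_t$. Restricting over the fiber, and using that $\varphi_t$ interchanges $p^{-1}(\ell)$ and $p^{-1}(\psi_t(\ell))$, I obtain an NQ-isomorphism $\mathbb T_\ell \DistTo \mathbb T_{\psi_t(\ell)}$.

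Finally I would globalize using transitivity. As $B$ is transitive (and $L$ connected, as is implicit for a transitive Lie algebroid), the anchored vector fields $\rho_B(b)$, $b\in\Gamma(B)$, span $T_\ell L$ at every point, so any $\ell'$ can be joined to $\ell$ by a finite concatenation of integral-curve segments of such sections; completeness makes each corresponding flow on $\mathbb V$ defined for the required times. Composing the fiber isomorphisms produced above along this concatenation gives the desired NQ-diffeomorphism $\mathbb T_\ell\cong\mathbb T_{\ell'}$.

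I expect the main obstacle to be the descent to fibers, i.e.\ verifying that $\Phi^{Y_b}_t$ genuinely restricts to an NQ-morphism of the vertical structure $K_\bullet$ rather than to a mere base diffeomorphism: this is where $P$-relatedness of the \emph{flows} (hence completeness, via Proposition \ref{prop:aboutComplete}) is indispensable, as it is precisely what forces even the higher Taylor coefficients to preserve the horizontal ideal. The existence of the flows for all required times — the other place completeness enters — is the second delicate point, and it is handled directly by Proposition \ref{prop:aboutComplete}.
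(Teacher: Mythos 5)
Your proof is correct and follows essentially the same route as the paper: flow the exact degree-zero vector field $[\mathfrak{i}_{\tilde H(b)},Q]$, use Proposition \ref{prop:aboutComplete} and Lemma \ref{lem:flow} to obtain a well-defined NQ-automorphism covering the flow on the base, and restrict it to the fibers. The only differences are minor: the paper chooses a single vector field $X$ on $L$ whose global time-$1$ flow already sends $\ell$ to $\ell'$ and lifts it by transitivity to one section $b$ with $\rho_B(b)=X$ (avoiding your concatenation of integral-curve segments), and it leaves implicit the descent-to-fiber step that you verify explicitly via $P$-relatedness of the flows.
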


\begin{proof}
There exists a vector field $X $ on $L$ whose time $1$ flow exists globally and maps $\ell$ and $\ell'$. Since $B$ is transitive, there exists a section $b \in \Gamma(B)$ with $ \rho_B(b)=X$.  Since the base vector field of $ [ {\mathfrak i}_{\tilde{H}(b)}, Q_{\NQ{M}}] $ is $p$-related to $X$, and since the time-$1$ flow  of  $\rho_B(b)=X$ is well-defined, Proposition \ref{prop:aboutComplete} implies that the time-$1$ flow $\Phi_1$ of $ [ {\mathfrak i}_{\tilde{H}(b)}, Q_{\NQ{M}}] $ is a well-defined NQ-manifold diffeomorphism.   Since the base  diffeomorphism $ \phi_1 \colon M \to M$  of $\Phi_1$ is over the time $1$- flow of $ X$, it maps the fiber $ p^{-1}(\ell)$ to the fiber $ p^{-1}(\ell')$. Therefore, the restriction of $ \Phi_1$ to  $ {\NQ{T}}_{\ell}$ is the desired NQ-manifold diffeomorphism ${\NQ{T}}_{\ell}\overset{\cong}{\to} {\NQ{T}}_{\ell'}$. 
\end{proof}
\begin{remark}
\normalfont
Proposition \ref{prop:fibersAreIso} obviously extends to the non-transitive case as follows:  $ {\NQ{T}}_{\ell}$ and  $ {\NQ{T}}_{\ell'}$ over any two $\ell, \ell' \in L$ in the same $B$-leaf are diffeomorphic NQ-manifolds.
 When $\NQ{M}$ {comes from} a Lie algebroid, this follows from Theorem C in \cite{MR3897481}. 
\end{remark}

\subsubsection{Horizontal lifts for complete Ehresmann connections}

The equivalent of the following lemma for fibrations of ordinary manifolds is well-known under the name of ``homotopy lifting property'', see \cite{MR1325242}, Chapter 7, or \cite{MR1867354} Section 4.2. Its Lie algebroid equivalent is implicit in the proof of Theorem 1.4 in \cite{Zhu}.
\begin{flemma}
	\label{lem:lifts}
	Let  $ P \colon \NQ{M} \to B[1]$ be an NQ-manifold fibration over a Lie algebroid as in Definition \ref{def:fibrations} that admits a complete Ehresmann connection $(E_\bullet, H)$. Given
	\begin{enumerate}
	    \item[a)]  a manifold $\Sigma$ called \emph{parameter space};
	    \item[b)] an NQ-morphism $e\colon T[1]\Sigma \mapsto \NQ{M} $ called \emph{initial shape};
	    \item[c)] a Lie algebroid morphism $b: T[1](I \times \Sigma) \to B[1] $ called \emph{base map}
	\end{enumerate} 
	making the following diagram of NQ-manifold morphisms  commutative
			\begin{equation}
	\label{eq:lemmaLift}
		\xymatrix{ 
			 T[1]\Sigma\ar[rr]^e \ar@{_(->}[d]_{u \to (0,u) }&   &   \NQ{M}\ar@{->>}[d]^P \\
			T[1](I\times \Sigma) \ar[rr]_b&  & B[1]}
		\end{equation}
		there exists a natural NQ-manifold morphism ${\mathcal L}^H(e,b)\colon T[1](I\times \Sigma)--\to \NQ{M}$ called \emph{horizontal lift of the base map $b$ with initial shape $e$} such that the following diagram commutes: 
	\begin{equation}
	\label{eq:lemmaLift2}
		\xymatrix{ 
			 T[1]\Sigma\ar[rr]^e \ar@{_(->}[d]_{u \to (0,u) }&   &   \NQ{M}\ar@{->>}[d]^P \\
			T[1](I\times \Sigma) \ar[rr]_b  \ar@{-->}[urr]_{{\mathcal L}^H(e,b)}&  & B[1]}
		\end{equation}
\end{flemma}

\begin{proof}
Consider the following Cartesian diagram of vector bundle morphisms 
\begin{equation}
\label{diag:cartesian}\xymatrix{
(TI\times T\Sigma)\times_{b,B,P}E_{-1}\ar[rrr]\ar[dr] \ar[ddd]& & & TI\times T\Sigma\ar[ddd] \ar[dl]\\ 
&(I\times \Sigma)\times_{L}M\ar[r]\ar[d]&I\times \Sigma\ar[d]& \\
& M\ar[r]& L & \\
E_{-1}\ar[ur] \ar[rrr]&  & & B\ar[ul]
} \end{equation}

Given a horizontal distribution $D$ relative to $P:E_{-1}\twoheadrightarrow B$, the section $\frac{\partial}{\partial t}$ of $TI\times T\Sigma$ lifts to a unique section $X$ of $(TI\times T\Sigma)\times_{b,B,P}E_{-1}$, such that for all $(t,\sigma,m)\in (I\times \Sigma)\times_{L}M$, $X(t,\sigma,m)=(\frac{\partial}{\partial t},0,u)$, with $u\in D$. By construction $P(u)=b_{t,\sigma}(\frac{\partial}{\partial t},0)$.\\

We now prove the first item. We define ${\mathcal L}^H(e,b)$ as the composition of three NQ-morphisms:
    \begin{align}
    \label{eq:composition}
        T[1]I\times T[1]\Sigma\overset{\alpha}{\longrightarrow} 
        T[1]I\times (T[1]I\times  T[1]\Sigma) \times_{b,B[1],P} \NQ{M}\overset{\beta}{\longrightarrow}
          (T[1]I\times  T[1]\Sigma) \times_{b,B[1],P} \NQ{M} \overset{\gamma}{\to} \NQ{M}
    \end{align}
    which we now describe. $ (T[1]I\times  T[1]\Sigma) \times_{b,B[1],P} \NQ{M} $  stands for fiber products of NQ-manifolds, which make sense because $P$ is a surjective submersion. Its base manifold is $ (I\times \Sigma)\times_{L}M $ and its associated vector bundle is given in degree $-1$  by $(TI \times T\Sigma) \times_{b,B,P} E_{-1}$.
        In particular, the section $X$ defined above can be seen as a degree minus one vector field $\mathfrak i_X$ on $ (T[1]I\times  T[1]\Sigma) \times_{b,B[1],P} \NQ{M} $. 
    \begin{enumerate}
        \item[$(\alpha)$] The first arrow  is the composition of $T[1]I\times T[1]\Sigma\to T[1]I\times T[1]I\times T[1]\Sigma, (t,\sigma)\mapsto (t,0,\sigma)$ with $\mathrm{id}_{T[1]I}\times A$, where $A: T[1]I\times T[1]\Sigma\to (T[1]I\times T[1]\Sigma)\times_{B[1]} \NQ{M}$ is the natural map associated to the pullback.
        \item[$(\beta)$] The central arrow is the flow $NQ$-morphism of the degree zero vector field $[\tilde Q, \mathfrak i_X]$, as constructed in Lemma \ref{lem:flow}, where $\tilde Q$ is the derivation of $(T[1]I\times  T[1]\Sigma) \times_{b,B[1],P} \NQ{M}$.
        \item[$(\gamma)$] The last arrow is the projection to $\NQ{M}$.
    \end{enumerate}
    Let us check that $\mathcal L^H(e,b)$ satisfies the required conditions.
Let us consider its restriction to $ \{0\} \times  T[1]\Sigma$. The morphism $(\beta)$ in   
\eqref{eq:composition}, restricted to $\{0\} \times  T[1]I \times T[1]\Sigma \times \NQ{M}$, is the projection onto the second component, so that the restriction of $(\beta)$ and $(\gamma)$ to  $\{0\} \times  T[1]I \times T[1] \Sigma \times \NQ{M}$ consists in projecting onto $ \NQ{M}$.
By definition of $(\alpha)$, the restriction to  $ \{0\} \times  T \Sigma$ coincides with $e$.

Let us prove that $P \circ \mathcal L^H(e,b) =  b $.
The following diagram is commutative,  where the upper horizontal line is \eqref{eq:composition}, the lower line is constructed as  \eqref{eq:composition}, with $B$ replacing $\NQ{M}$  and $ b_1 $ replacing $e$, and with all vertical lines being induced by $P \colon \NQ{M} \to B$:
 \begin{align}
    \label{eq:composition2}
        \xymatrix{T[1]I\times T[1]\Sigma \ar@{=}[d]\ar[r]^--{\alpha}&
        T[1]I\times (T[1]I\times  T[1]\Sigma) \times_{b,B[1],P} \NQ{M} \ar[d]^{\mathrm{id}\times \mathrm{id}\times P}\ar[r]^{\beta}& 
          (T[1]I\times  T[1]\Sigma) \times_{b,B[1],P} \NQ{M}\ar[d]^{\mathrm{id}\times P}\ar[r]^>>{\gamma} & \NQ{M}  \ar[d]^P \\T[1]I\times T[1]\Sigma\ar[r]
          &
        T[1]I\times (T[1]I\times  T[1]\Sigma) \times_{b,B[1],id} B[1] \ar[r]& 
          (T[1]I\times  T[1]\Sigma) \times_{b,B[1],id} B\ar[r] & B[1] 
          }
    \end{align}
   This commutativity is straightforward, except for the central square, for which it expresses that $X_{(t,\sigma,m)} $, with $p(m)=\sigma $, projects to $ (\frac{\partial }{\partial t}, b_{t,\sigma} (\frac{\partial}{\partial t}))$. The composition of the three lower arrows coincides with $b$.
\end{proof}

\begin{definition}
	Let  $ P \colon \NQ{M} \to B[1]$ be an NQ-manifold fibration over a Lie algebroid as in Definition \ref{def:fibrations} that admits a complete Ehresmann connection $(E_\bullet, H)$. Let $(\Sigma,e,b)$ be a parameter space, an initial shape, a base map as in Lemma \ref{lem:lifts}.
	We call \emph{parallel transport of $e$ with respect to $b$} the restriction to $ \{1\} \times \Sigma  $ of the horizontal lift ${\mathcal L}^H(e,b) $. We denote it by $ {\mathcal P}^H(e,b) \colon T[1]\Sigma \to \NQ{M}$.
\end{definition}

    The parallel transport behaves well with respect to restrictions to submanifolds:
    
    \begin{proposition}
    \label{prop:restrictions}
    Let  $ P \colon \NQ{M} \to B[1]$ be an NQ-manifold fibration over a Lie algebroid as in Definition \ref{def:fibrations} that admits a complete Ehresmann connection $(E_\bullet, H)$. Let $\Sigma,e,b$ be a parameter space, an initial shape, and a base map as in Lemma \ref{lem:lifts}. For every submanifold $ {\mathfrak i} : \Sigma' \hookrightarrow \Sigma$, the following diagram commutes:
    	\begin{equation}
	\label{eq:lemmaLiftRestr}
		\xymatrix{ 
		T[1]\Sigma'	\, \, \ar@{^(->}[rrrr] \ar[drr]_{\mathcal P^{H}(e|_{T[1]\Sigma'},b|_{T(I \times \Sigma')}) } & & & & T[1]\Sigma \ar[lld]^{\mathcal P^{H}(e,b) }\\  && \NQ{M} && }
		\end{equation}
    \end{proposition}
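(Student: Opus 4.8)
The plan is to deduce the statement from the naturality of the entire horizontal lift construction of the Fundamental Lemma \ref{lem:lifts} with respect to the inclusion $\mathfrak{i}\colon \Sigma'\hookrightarrow\Sigma$. Since $\mathcal{P}^H(e,b)$ is by definition the restriction of $\mathcal{L}^H(e,b)$ to $\{1\}\times\Sigma$, and likewise $\mathcal{P}^H(e|_{\Sigma'},b|_{I\times\Sigma'})$ is the restriction of $\mathcal{L}^H(e|_{\Sigma'},b|_{I\times\Sigma'})$ to $\{1\}\times\Sigma'$, it suffices to establish the identity of NQ-morphisms
$$ \mathcal{L}^H(e|_{\Sigma'},b|_{I\times\Sigma'}) = \mathcal{L}^H(e,b)\circ(\mathrm{id}_I\times\mathfrak{i}) $$
and then restrict both sides to $t=1$ to obtain the commutativity of diagram \eqref{eq:lemmaLiftRestr}.

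To prove this identity I would trace the inclusion through the three-arrow composition \eqref{eq:composition}. The map $\mathrm{id}_I\times\mathfrak{i}$ induces at each stage a morphism from the $\Sigma'$-version of the relevant space to the $\Sigma$-version: on tangent data it is $T(\mathrm{id}_I\times\mathfrak{i})$, and it descends to a morphism of fibered products $(TI\times T\Sigma')\times_{b,B,P}\mathbb{V}\to(TI\times T\Sigma)\times_{b,B,P}\mathbb{V}$ precisely because $b|_{I\times\Sigma'}$ is the genuine restriction of $b$, so that the defining fibered-product conditions match. The first arrow (built from the pullback map $A$) and the last arrow (the projection to $\mathbb{V}$) are manifestly compatible with these induced maps, as both are defined functorially from the data and commute with restriction.

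The heart of the argument is the compatibility of the central (flow) arrow. Here I would show that the degree $-1$ vector field $X'$ associated to the restricted data is the restriction of the vector field $X$ associated to $(e,b)$ under the inclusion-induced embedding of fibered products. Indeed, over any point $(t,\sigma,m)$ with $\sigma\in\Sigma'$, the defining prescription gives $X_{(t,\sigma,m)}=(\frac{\partial}{\partial t},0,u)$ with $u$ the unique element of the horizontal distribution $D$ satisfying $P(u)=b_{t,\sigma}(\frac{\partial}{\partial t},0)$; since $b|_{I\times\Sigma'}$ and $b$ agree at such points and the horizontal lift through $D$ is unique, the very same $u$ defines $X'$. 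Thus $X$ and $X'$ are $(\mathrm{id}_I\times\mathfrak{i})$-related, and because the flow NQ-morphism is natural with respect to related vector fields (as in the flow construction of Lemma \ref{lem:flow}), the central arrows intertwine. Composing the three compatibilities yields the displayed identity.

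The main obstacle I anticipate is exactly this matching of the two vector fields: one must check carefully that the horizontal lift defined on the larger fibered product, when restricted to the sub-fibered-product over $I\times\Sigma'$, coincides with the lift constructed intrinsically from $b|_{I\times\Sigma'}$. This hinges on the lift being determined pointwise by $b$ together with the fixed distribution $D$ coming from $H$, and on the uniqueness clause in the pointwise lifting; once this is secured, the compatibility of flows is a formal consequence of naturality and the two outer squares are routine. Assembling the three compatibilities and restricting to $\{1\}$ then gives the commutativity of \eqref{eq:lemmaLiftRestr}.
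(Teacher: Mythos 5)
Your proposal is correct and takes essentially the same route as the paper: the paper's proof simply asserts that ``a close look at its construction'' yields the identity $\mathcal{L}^H(e,b)|_{I\times\Sigma'}=\mathcal{L}^H(e|_{\Sigma'},b|_{I\times\Sigma'})$ and then restricts to $\{1\}\times\Sigma'$, which is exactly your reduction. The only difference is that you actually carry out that close look, tracing the inclusion through the three arrows of \eqref{eq:composition} and verifying that the degree $-1$ vector fields $X$ and $X'$ are $(\mathrm{id}_I\times\mathfrak{i})$-related so their flow NQ-morphisms intertwine — a correct filling-in of the step the paper leaves implicit.
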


\begin{proof}
A close look at its construction shows that the horizontal lift behaves well with respect to restriction to a submanifold $\Sigma' \hookrightarrow \Sigma$, \emph{i.e.~}it satisfies $ {\mathcal L}^H(e,b) |_{T[1](I \times \Sigma')} = {\mathcal L}^H(e|_{T[1]\Sigma'}, b|_{T(I \times \Sigma')}) $ or, as a diagram:  
	\begin{equation}
	\label{eq:lemmaLiftRestr0}
		\xymatrix{ 
			 T[1]\Sigma\ar[d]_e  \, \, \ar@{^(->}[rrr]^{u \hookrightarrow (0,u) }& & &  T[1](I\times \Sigma) \ar[d]^{b} \ar@{-->}[dlll]_{{\mathcal L}^H(e,b)} \\				\NQ{M}\ar@{->>}[rrr]^P & & & B[1] \\
			   T[1]\Sigma'\, \, \ar[u]^{e|_{T[1]\Sigma'}} \ar@{^(->}[rrr]_{u \hookrightarrow (0,u) } \ar@/^4.0pc/@{_(->}[uu] & & &  T[1](I\times \Sigma')  \ar[u]_{b|_{T(I \times \Sigma')}}  \ar@{-->}[ulll]_{ {\mathcal L}^H(e|_{T[1]\Sigma'}, b|_{T(I \times \Sigma')}) } \ar@/_4.0pc/@{^(->}[uu]_{}}.
		\end{equation}
		This implies the commutativity of the diagram \eqref{eq:lemmaLiftRestr}.
		\end{proof}

{For a good understanding of the next lemma, recall that for a $\NQ{N}\subset \NQ{M}$ 
a sub-NQ-manifold, NQ-morphisms valued in $\NQ{N}$ may  be homotopic in $\NQ{M}$ without being homotopic in $\NQ{N}$, exactly as for ordinary manifolds.
}
\begin{lemma}\label{lem:hotint}
    Let  $ P \colon \NQ{M} \to B[1]$ be an NQ-manifold fibration over a Lie algebroid as in Definition \ref{def:fibrations} and $\ell \in L$ a point.
    Consider two NQ-morphisms $\Phi_i\colon T[1]\Sigma \to \NQ{T}_\ell $, $i=0,1$. Then the following items are equivalent:
    \begin{enumerate}
        \item[(i)] $\Phi_0$ and $\Phi_1$ are homotopic in $\NQ{T}_\ell$,
        \item[(ii)] $\Phi_0$ and $\Phi_1$ are homotopic in $\NQ{M}$ through a homotopy $h  \colon T[1]I \times T[1]\Sigma \to \NQ{M}$ such that $P \circ h \colon T[1]I \times T[1]\Sigma \to B[1]$ is homotopic to the constant map $\underline{\ell}$. 
    \end{enumerate}

\end{lemma}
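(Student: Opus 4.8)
The plan is to prove the two implications separately: (i)$\Rightarrow$(ii) is formal, whereas (ii)$\Rightarrow$(i) carries the content and rests on the homotopy lifting property of Fundamental Lemma~\ref{lem:lifts}.

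For (i)$\Rightarrow$(ii), suppose $H\colon I\times\Sigma\to\mathbb T_\ell$ is a homotopy from $\Phi_0$ to $\Phi_1$ inside the fibre, and let $h\colon I\times\Sigma\to\mathbb V$ be its composition with the inclusion $\mathbb T_\ell\hookrightarrow\mathbb V$. Since the degree $-1$ part of $\mathbb T_\ell$ is $K_{-1}={\mathrm{Ker}}(P)$ and its base lies in $p^{-1}(\ell)$, composing this inclusion with $P$ kills every Taylor coefficient and sends the base to $\ell$. Hence $P\circ h=\underline{\ell}$ on the nose, which is a fortiori homotopic to $\underline{\ell}$.

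For (ii)$\Rightarrow$(i), I would first record the hypothesis as a Lie algebroid morphism $G\colon T(J\times I\times\Sigma)\to B$, where $J=[0,1]$ is the new homotopy parameter with coordinate $s$ while $I$ keeps the original coordinate $t$, subject to $G|_{s=0}=P\circ h$ and $G|_{s=1}=\underline{\ell}$. Applying Fundamental Lemma~\ref{lem:lifts} with parameter space $I\times\Sigma$, initial shape $h$ and base map $G$ (the compatibility $G|_{s=0}=P\circ h$ being exactly the hypothesis there) produces a horizontal lift $\mathcal L:=\mathcal L^H(h,G)\colon J\times I\times\Sigma\to\mathbb V$ with $\mathcal L|_{s=0}=h$ and $P\circ\mathcal L=G$; its parallel transport $\mathcal P:=\mathcal L|_{s=1}\colon I\times\Sigma\to\mathbb V$ then satisfies $P\circ\mathcal P=\underline{\ell}$, so by the degree argument above it factors through the fibre $\mathbb T_\ell$. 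To identify its two ends I would invoke Proposition~\ref{prop:restrictions}: restricting $\mathcal P$ to $\{t=0\}\times\Sigma\hookrightarrow I\times\Sigma$ equals the parallel transport of $h|_{t=0}=\Phi_0$ along the base path $G|_{t=0}\colon J\times\Sigma\to B$. Choosing the contracting homotopy $G$ rel the faces $t\in\{0,1\}$---natural since $P\circ h$ and $\underline{\ell}$ already coincide there---makes $G|_{t=0}$ the constant map $\underline{\ell}$, and parallel transport along a constant base map is the identity, since the lifted vector field then has vanishing $\mathbb V$-component, lying in $H\cap{\mathrm{Ker}}(P)=0$. Thus $\mathcal P|_{t=0}=\Phi_0$ and, symmetrically, $\mathcal P|_{t=1}=\Phi_1$, exhibiting $\mathcal P$ as the desired homotopy in $\mathbb T_\ell$.

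The step I expect to be the main obstacle is precisely this boundary bookkeeping: one must be able to choose the null-homotopy of the base rel the endpoints $t\in\{0,1\}$ (and rel any distinguished point of $\Sigma$ in the based setting), since otherwise the lifted homotopy would move $\Phi_0$ and $\Phi_1$ within the fibre instead of fixing them. A secondary point to settle is the role of completeness: Fundamental Lemma~\ref{lem:lifts} requires a complete Ehresmann connection in order that the horizontal lift be defined over all of $J$, so either completeness is a standing hypothesis here or one must check that the lift exists for the particular contracting base map at hand (for instance, at the level of germs).
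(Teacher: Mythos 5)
Your proof is correct and takes essentially the same route as the paper's: the easy direction is composition with the inclusion $\mathbb T_\ell \hookrightarrow \mathbb V$, and the hard direction lifts a contracting homotopy of $P\circ h$ via Fundamental Lemma~\ref{lem:lifts}, notes the lift is $\mathbb T_\ell$-valued wherever the base is constantly $\underline{\ell}$ (your degree/Taylor-coefficient argument), and pins down the ends by the $H\cap \mathrm{Ker}(P)=0$ vanishing of the lifted vector field --- the paper merely extracts the fibre homotopy along a curve $\gamma$ through a neighbourhood $U$ of three edges of the square instead of your $\{s=1\}$ slice, which amounts to the same thing. The two caveats you flag are genuine but are shared by the paper, whose proof silently takes the contracting homotopy $h_B$ to be constantly $\underline{\ell}$ near the faces $t\in\{0,1\}$ (legitimate here since $P\circ \Phi_i=\underline{\ell}$, though not automatic for an arbitrary free homotopy as in item (ii)) and tacitly uses the complete Ehresmann connection $H$ from the surrounding setting of Theorem~\ref{thm:snake} even though the lemma's statement omits completeness.
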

\begin{proof}
The implication  (ii) $ \implies $ (i) is trivial. For the converse implication,  let $h_B:T[1](I\times I\times \Sigma)\to B[1]$ be a contracting homotopy for $P \circ h$. Consider its horizontal lift ${\mathcal L}^H (h , h_B) \colon T[1]I (\times I \times \Sigma) \to \NQ{M}$ with initial shape $h \colon T[1]I \times T[1]\Sigma \to \NQ{T}_\ell $ with respect to the base path $h_B:T[1](I\times I\times \Sigma) \to B[1]$. 
\begin{center}
		\begin{figure}[!ht]
			\caption{\label{fig:edge} The neighborhood $ U$ and the curve $ \gamma(t)$. }
			\begin{center}
			\includegraphics[clip=true, trim = 0mm  150mm 0mm 5mm,scale=0.3]{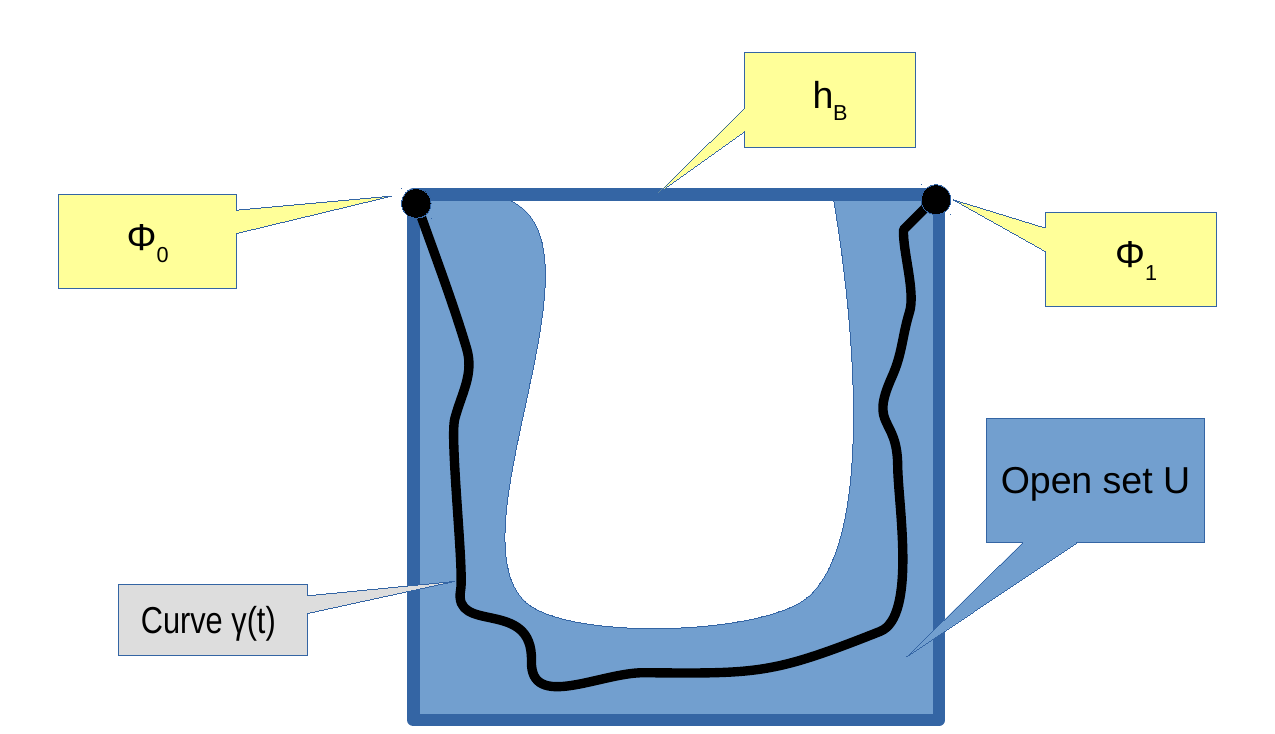}
			\end{center}
		\end{figure}
	\end{center}
Since $ h_B \colon  \colon T[1](I \times I) \to B[1]$ is constantly $ \ell$ on  $T[1](U \times \Sigma )$, with $U \subset I^2$  a neighborhood of the three edges of the square depicted in Figure \ref{fig:edge},
the restriction to $T[1](U \times \Sigma) $ of the horizontal lift ${\mathcal L}^H (h , h_B) $ is valued in $ {\NQ{T}}_\ell$. Let $\gamma : I \to I^2 $ a curve valued in $U$ relating $(0,1)  $ and $ (1,1)$ as in Figure \ref{fig:edge}. The restriction to  $ \{ \gamma(I) \} \times \Sigma $  of the  horizontal lift ${\mathcal L}^H (h , h_B) \colon T[1] (I \times I \times \Sigma) $ is a $ \NQ{T}_\ell$-valued homotopy between $ \Phi_0$ and $ \Phi_1$.

\end{proof}

\subsubsection{Main result}
We can now associate a long exact sequence to any NQ-manifold fibration over a Lie algebroid, generalizing Theorem 1.4 in \cite{Zhu}.

\begin{theorem}\label{thm:snake}
	Consider a fibration of NQ-manifold  over a  Lie algebroid $P \colon \NQ{M} \to B[1]$ as in Definition \ref{def:fibrations} that admits a complete Ehresmann connection. For  $\ell \in L$, denote by $ {\NQ{T}}_\ell  $ the fiber over $ \ell$.

Then there exists a family of group morphisms:
$$\begin{array}{rclll} \delta \colon & \pi_n (B[1],\ell ) & \longrightarrow & \Gamma ( \pi_{n-1}({\NQ{T}}_\ell ) )& \hbox{ for $ n \geq 2$ } \\
&  \pi_1 (B[1],\ell )&  \longrightarrow &  \mathrm{Diff}(M / \NQ{M}) &
\end{array}
$$
such that the sequence of groups morphisms
	$$
	\xymatrix{  & \dots\ar[r]^{P_*}  &  \ar[dll]_{\delta}\pi_{n+1}(B[1],\ell) \\
		\pi_n({\NQ{T}}_\ell,m)\ar[r]^{i_*}&\pi_n(\NQ{M},m)\ar[r]^{P_*}&\pi_n(B[1],\ell)\ar[dll]_{\delta}\\
		\pi_{n-1}({\NQ{T}}_\ell ,m)\ar[r]^{i_*}& \dots & \dots \\ 
}
		$$
		is exact for all $m \in p^{-1}(\ell)$.
		Moreover, $\delta(\pi_2(B[1], \ell)) $ 
		lies in the center of $\Pi_1(\NQ{T}_\ell)$,
		and, for a given  $\gamma \in \pi_1(B[1],\ell) $, the diffeomorphism $\delta (\gamma)$ of the orbit space of $ {\NQ{T}}_\ell$ fixes the  ${\NQ{T}}_\ell$-leaf of $ m$ if and only if $\gamma$ lies in  the image of  $ {P_*} \colon \pi_{1}(\NQ{M} ,m) \to \pi_{1}(B[1] ,\ell)$. 
\end{theorem}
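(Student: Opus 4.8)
The plan is to transcribe the classical proof of the homotopy long exact sequence of a fibration into the NQ-category, using the horizontal lift of Fundamental Lemma \ref{lem:lifts} in place of the usual homotopy lifting property and Lemma \ref{lem:hotint} to pass between homotopies inside $\mathbb T_\ell$ and homotopies inside $\mathbb V$. I represent classes in $\pi_n(B,\ell)$ by NQ-morphisms $\beta$ out of the cube $I^n$ (equivalently, out of $S^n$ constant near the north pole) carrying all of $\partial I^n$ to $\underline\ell$; writing $I^n=I\times I^{n-1}$ and treating the first factor as time makes the connecting map transparent. The unlabelled arrow $\pi_n(\mathbb T_\ell,m)\to\pi_n(\mathbb V,m)$ of the sequence is the map $\iota_*$ induced by the inclusion $\iota\colon\mathbb T_\ell\hookrightarrow\mathbb V$.

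\emph{Construction of $\delta$.} A representative $\beta$ yields a Lie algebroid base map $b\colon T(I\times I^{n-1})\to B$ that is constantly $\ell$ on the bottom face $\{0\}\times I^{n-1}$, the top face $\{1\}\times I^{n-1}$ and the sides $I\times\partial I^{n-1}$. With parameter space $\Sigma=I^{n-1}\times p^{-1}(\ell)$, initial shape $e\colon(u,m)\mapsto\underline m$, and $b$ pulled back along the projection, the completeness of $H$ gives a horizontal lift $\mathcal L^H(e,b)$, and I set $\delta(\beta)(m):=[\,\mathcal P^H(e,b)|_{I^{n-1}\times\{m\}}\,]$. As in the proof of Lemma \ref{lem:hotint}, the lift is constantly $\underline m$ on bottom and sides and valued in $\mathbb T_\ell$ on top, so this restriction represents a class in $\pi_{n-1}(\mathbb T_\ell,m)$; letting $m$ vary inside the single NQ-morphism $\mathcal P^H(e,b)$ exhibits $m\mapsto\delta(\beta)(m)$ as a smooth section, i.e.\ an element of $\Gamma(\pi_{n-1}(\mathbb T_\ell))$. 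For $n=1$ the same recipe produces the diffeomorphism of the orbit space appearing in the statement. The crucial by-product I would record here is a \emph{computation principle}: by Lemma \ref{lem:hotint} together with Proposition \ref{prop:restrictions}, the top-face class equals $\delta(\beta)(m)$ for \emph{any} lift of $b$ sharing the initial shape $\underline m$ and constant $\underline m$ on the sides, not only for the horizontal one.

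\emph{Well-definedness, homomorphism, exactness.} Independence of the representative follows by lifting a homotopy $\beta_0\simeq\beta_1$ and applying the contracting argument of Lemma \ref{lem:hotint}. That $\delta$ is a group morphism comes from performing the product of $\pi_n(B,\ell)$ in one of the $I^{n-1}$-directions: concatenated base maps have concatenated horizontal lifts (Proposition \ref{prop:restrictions}), so the top faces concatenate into the product in $\pi_{n-1}(\mathbb T_\ell)$; for $n=1$ the product is instead the composition of the time-one flows, which compose. Exactness is then the classical three-step verification. At $\pi_n(\mathbb V,m)$: $P_*\iota_*=0$ because the fibre lies over $\ell$, while $\ker P_*\subseteq\mathrm{im}\,\iota_*$ is exactly the horizontal-lift deformation of Lemma \ref{lem:hotint} (lift a null-homotopy of $P\circ\alpha$ with initial shape $\alpha$ to push $\alpha$ into $\mathbb T_\ell$). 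At $\pi_n(B,\ell)$: a class $P_*[\alpha]$ has a global lift $\alpha$ equal to $\underline m$ on all of $\partial I^n$, so by the computation principle its top face computes $\delta$ and is $\underline m$, giving $\delta P_*=0$; conversely $\delta(\beta)(m)=0$ lets me glue a fibre null-homotopy of the monodromy onto the horizontal lift (smooth concatenation) to obtain a global lift of $\beta$, whence $[\beta]\in\mathrm{im}\,P_*$. At $\pi_{n-1}(\mathbb T_\ell,m)$: $\iota_*\delta=0$ since the horizontal lift itself is a $\mathbb V$-null-homotopy of the monodromy; conversely a class $[\sigma]$ with $\iota_*[\sigma]=0$ bounds a filling $H\colon I^n\to\mathbb V$ with top face $\sigma$ and $\underline m$ elsewhere, and $\beta:=P\circ H$ then satisfies $\delta(\beta)(m)=[\sigma]$ by the computation principle.

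\emph{The two refinements, and the main obstacle.} The final assertion is exactness at $\pi_1(B,\ell)$ read off at the level of $\pi_0$: $\delta(\gamma)$ fixes the $\mathbb T_\ell$-leaf of $m$ precisely when the $m$-based horizontal lift of $\gamma$ returns to the leaf of $m$, i.e.\ when $\gamma$ lifts to a loop of $\mathbb V$ based at $m$, i.e.\ when $\gamma\in\mathrm{im}(P_*\colon\pi_1(\mathbb V,m)\to\pi_1(B,\ell))$. For the centrality of $\delta(\pi_2(B,\ell))$ I would run an Eckmann--Hilton argument on the square $I^2$: its two independent directions allow the monodromy $\delta(\beta)$ to be slid past an arbitrary loop of $\mathbb T_\ell$, and since $\pi_2(B,\ell)$ is abelian this exchange forces $\delta(\beta)$ to commute with every arrow of the fundamental groupoid $\mathbf\Pi(\mathbb T_\ell)$. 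I expect this centrality step, and---throughout the exactness arguments---the verification that topological fillings and concatenations can be realised by \emph{genuine} smooth NQ-morphisms (so that the computation principle and the gluings are legitimate), to be the real obstacles; the remainder is a faithful transcription of the simplicial fibration argument with Lemma \ref{lem:lifts} supplying every lift.
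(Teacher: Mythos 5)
Your proposal is correct and follows essentially the same route as the paper's own proof: you define $\delta$ by parallel transport of the constant initial shape $\underline{m}$ via the Fundamental Lemma \ref{lem:lifts}, use the parameter space $I^{n-1}\times p^{-1}(\ell)$ with Proposition \ref{prop:restrictions} for smoothness of the section, invoke Lemma \ref{lem:hotint} for all fibre-versus-total-space homotopy comparisons, run the classical three exactness checks, and establish centrality by sliding $\delta$ along an arbitrary arrow of $\mathbf\Pi(\mathbb T_\ell)$ --- exactly the paper's concatenation-of-homotopies argument (your appeal to abelianness of $\pi_2$ is superfluous there). Your ``computation principle'' is simply the paper's inline argument for $\delta\circ P_*=0$ (concatenating $\tau^{-1}$ with the horizontal lift and applying Lemma \ref{lem:hotint}) abstracted into a reusable statement, and the smooth-concatenation issues you flag are handled by the boundary-collar conventions the paper fixes when defining homotopies.
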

\begin{proof} The fundamental Lemma \ref{lem:lifts} allows to make the proof of the present theorem similar to the usual proof for manifold fibrations as presented for instance in Theorem 4.41 in \cite{MR1867354} - with smooth maps being replaced by NQ-morphisms.
Although it follows the same structure as the proof of Theorem 1.4 in \cite{Zhu}, our point of view is more (NQ-) geometric.\\

\noindent
{\bf Construction of the connecting map $\delta$.}
Consider an NQ-morphism $ T[1]S^n\to B[1]$, which is constantly  $ \ell$  near the north pole $N \in S^n$. This last property allows to see it as an NQ-morphism $ \sigma \colon T[1](I\times S^{n-1})\to B[1]$ which is constantly $\ell$ near  $\{0 \} \times S^{n-1} $, $\{1\} \times S^{n-1} $ and $I \times \{N\}$.
For all $m \in p ^{-1}(\ell)$, we define $$ \delta|_m (\sigma) := {\mathcal P}^H(\sigma  , \underline{m} )$$ 
to be the parallel transport of the initial shape $\underline{m} \colon T[1]S^{n-1} \to \NQ{M}$ with respect to the base map $\sigma$.\\

Let us study this map. 
\begin{enumerate}
    \item  {\textbf{It is valued in the fiber $ {\NQ{T}}_\ell$.}} The horizontal lift of $e $ with respect to the base path $\sigma$ makes by definition the following diagram commutative:
	\begin{equation}
		    \label{eq:item1H}
		      \xymatrix{ 
		 	  T[1]S^{n-1} \ar[rr]^{\underline{m}} \ar@{_(->}[d]_{u \to (0 ,u)}  & & \NQ{M}\ar@{->>}[d]^P \\
		  T[1](I \times S^{n-1}) \ar[rr]_{\sigma } \ar@{-->}[urr]^{{\mathcal L}^H(\sigma  , \underline{m} )} & & B[1].}
    \end{equation}

    Since $\sigma $  is constantly $\ell  $ on $\{1\} \times S^n $, the commutativity of \eqref{eq:item1H} implies that $ \delta|_m ( \sigma) = {\mathcal P}^H(\sigma  , \underline{m} ) = {\mathcal L}^H(\sigma  , \underline{m} )|_{\{1\}\times T[1]S^{n-1}}$ is valued in the fiber $ {\NQ{T}}_\ell$.
    
    \item {\textbf{It is constantly equal to $m$ near the north pole}}. For every parameter manifold, parallel transport of a constant initial shape $\underline{m}$ with respect to a base map which is a constant map $\underline{\ell} $  is the constant map  $\underline{m} $ again. Since the north pole admits a neighborhood $U$ on which $\sigma|_ {T[1](I \times U)}$ is $\underline{\ell}$, it follows from Proposition \ref{prop:restrictions}, applied to the open submanifold $ \Sigma' =U $, that ${\mathcal P}^H(\sigma  , \underline{m} )$ is equal to $\underline{m}$ in a neighborhood of the north pole.
    
    \item {\textbf{ It preserves homotopy.}} Let $h\colon T[1] (I \times S^{n-1}\times I )\to B[1]$ be a homotopy between NQ-morphisms $\sigma_i\colon T[1](S^{n-1}\times I) \to B[1] $, $i=0,1$. Let ${\mathcal P} (\underline{m} , h) $ be the parallel transport of the initial shape $\underline{m}$ with respect to the base map $h$.
    By Proposition \ref{prop:restrictions}, the restrictions of ${\mathcal P} (\underline{m} , h) $  to the submanifolds $ T[1](\{i\} \times S^{n-1}\times I) \subset T[1]( I \times S^{n-1}\times I)$ is $\delta|_m (\sigma_i) $ for $i=0,1$. 
\end{enumerate}

Altogether these three points allow to define an induced group morphism:
 $$  \delta |_m(\sigma) \colon \pi_n(B[1] , \ell ) \longrightarrow  \pi_{n-1} ({\NQ{T}}_\ell ,m).$$
We have to check that $m \to \delta |_m(\sigma) $ is valued in smooth sections of the group bundle  $\pi_{n-1} ({\NQ{T}}_\ell)$.
Consider  \emph{(i)} the parameter space $ \Sigma = S^{n-1} \times p^{-1}(\ell)$, \emph{(ii)} the initial shape given by the projection onto $ pr_2 \colon S^{n-1} \times p^{-1}(\ell) \to \NQ{M}|_{p^{-1}(\ell)}$, and \emph{(iii)} the base map $\sigma \circ pr$ with $pr$ being the projection $I \times S^{n-1} \times  p^{-1}(\ell) \to I \times S^{n-1}$. The henceforth associated parallel transport is an NQ-morphism:
 $$ S^{n-1} \times  p^{-1}(\ell)  \longrightarrow {\NQ{T}}_\ell .$$ 
 Proposition \ref{prop:restrictions} implies that its restriction to the parameter submanifolds $S^{n-1} \times \{m\}$ is $\delta|_m(\sigma)$. This guarantees the smoothness of the section. \\

 \noindent {\bf The relation $\delta\circ P_*=0$.} 
		Consider an element of $ \pi_n(B,\ell )$ that can be represented by an NQ-morphism $ \sigma$  of the form $ \sigma = P_* (\tau)$ for some $ \tau \colon S^n \to \NQ{M} $ constantly equal to $m$ near $N$. 
		Let us compare $ \tau$ with the horizontal lift $\mathcal L^H(\sigma, \underline{m})$ of the initial condition $\underline{\sigma} $ with respect to the base $ \sigma$. 
		
		We can see $ \tau$ as NQ-morphism $\tau \colon T[1](I \times S^{n-1}) \to \NQ{M} $ constantly equal to $m$ in neighborhoods of $\{0\} \times T[1]S^{n-1}$ and $\{1\} \times T[1]S^{n-1}$. 
		This allows to consider the concatenation $\tau^{-1}*L^H(\sigma, \underline{m})$, where $\tau^{(-1)}$ is the inverse path of $\tau$, \emph{i.e.}~$ \tau \circ (1-t \times {\mathrm{id}}_{S^{n-1}}) $. Its projection through $P$ being the concatenation of $\sigma^{-1} $ with $\sigma $ is homotopic to $\underline \ell$. By Lemma \ref{lem:hotint}, the restrictions to the endpoints are homotopic in $\NQ{T}_\ell$.  These restrictions being $\underline{m}$ for both $\tau $ and $\delta|_{m}(\sigma) $, this means that $\delta(P_*([\tau]))$ is zero.  \\

		\noindent {\bf Exactness at $\pi_n(B[1],b_0)$.}  Let  $\sigma \colon T[1]S^n \to B[1] $. Consider $\mathcal L^H(\sigma,\underline{m}): T[1] (I \times S^{n-1})  \to \NQ{M} $ in diagram \eqref{eq:item1H}. Recall that  $ \delta(\sigma)$ is represented by the restriction of  $\mathcal L^H(\sigma,\underline{m})$ to  $T[1]( \{1\} \times S^{n-1})$.  If $\delta(\sigma) =0  $, then there exists
		a homotopy  $ h \colon  T[1]([1,2] \times  S^{n-1})  \to  {\NQ{T}}_\ell$ whose restrictions to $ T[1]({t} \times S^{n-1})$ is $\delta(f) $ for $t$ near $1$ and $ \underline{m}$
		for $t$ near $2$. The concatenation of  $\mathcal L^H(\sigma,\underline{m})$ and $h$ is smooth in view of the boundary assumptions and yields an NQ-morphism $ \tau \colon T[1]([0,2]) \to \NQ{M} $, defining an element in $ \pi_n( \NQ{M}, m) $, whose image through $ P$ is $\sigma$ by construction. \\

		\noindent {\bf Exactness at $\pi_n(\NQ{M}, e_0)$.} 
		Let $ \tau \colon T[1]S^n \to \NQ{M}$ be such  that $ P_* (\tau)=0$, so that there exists an NQ-morphism $ h :T[1]( I \times S^n)  \to B[1]   $ whose restrictions to $ T[1](\{0\} \times S^n) $ is $ \underline{\ell}$ and to  $T[1]( \{1\} \times S^n )$ is $ \underline{\ell}$ is $P_* (\tau)$. In view of the first item of Lemma \ref{lem:lifts}, there exists $\mathcal L^H(h, \tau) $ that makes the following diagram commutative:
		 	$$ \xymatrix{ T[1](\{0 \} \times S^{n} )\ar[rr]^{\tau} \ar[d] &&   \NQ{M}\ar@{->>}[d]^p \\
		 	  T[1](I\times S^{n}) \ar[rr]_{h}\ar@{-->}[urr]^{\mathcal L^H(h,\tau)} && B[1].} 
		 	$$
		In particular, the restriction of $\mathcal L^H(h,\tau)$ to $ T[1](\{0\} \times S^n)$, which is $ \tau$, is homotopic to its restriction to   $ T[1](\{1\} \times S^n)$. The latter being mapped by $P$ to $ \underline{\ell}$ it takes values in  $ {\NQ{T}}_\ell$, so that $ [\tau]$ lies in the image of ${i}_* $.\\

		\noindent {\bf Exactness at $\pi_{n-1}(\NQ{T}_\ell,m)$.}
		Let $ \tau : T[1]S^{n-1}\to {\NQ{T}}_l$ be such that $i_* ( \tau ) =0 $.
		By Lemma \ref{lem:hotint} there exists an NQ-morphism $h : T[1](I \times S^{n-1})\to \NQ{M}$ whose restriction to $T[1](\{t\} \times S^{n-1}) $ is $\underline{m}$ for all $t $ near $0$, and whose restriction to $T[1](\{t\} \times S^{n-1}) $ is $ \tau$ for all $t$ near $1$. In particular, this implies that $ P_* \circ h:T[1](I\times S^{n-1})\to B[1]$ is constantly equal to $\ell$ on neighborhoods of $T[1](\{0\} \times S^{n-1}) $ and $T[1](\{1\} \times S^{n-1})$, defining therefore some NQ-manifold morphism $ \sigma \colon T[1]S^n \to B[1] $. The  NQ-morphisms $\delta|_m(\sigma)$ and $\tau$ are homotopic in $ \NQ{M}$ through a homotopy as in the second item of Lemma \ref{lem:hotint}, so that they are homotopic in $ {\NQ{T}}_\ell$.\\

		\noindent {\bf The relation $\delta(\pi_2(B[1], \ell))\subset \mathrm{Center}(\Pi(\NQ{T}_\ell))$.} Consider $[\sigma] \in \pi_2(B[1],\ell)$, and $\tau:T[1]I\to \NQ{T}_\ell$ a path from $m$ to $m'$. We have to show that $ \delta|_{m'}(\sigma)\circ \tau $ is homotopic to $\tau\circ \delta|_m(\sigma)$ in $\NQ{T}_\ell$. By Lemma \ref{lem:hotint}, it suffices to find a homotopy in $\NQ{M}$, whose image through $P$ is homotopic to a constant map in $B$. Such a homotopy is given by the concatenation of homotopies
				 	$$ \xymatrix{
				 	\delta|_m(\sigma)*\tau \ar[rrr]^{\mathcal L^H(\sigma,\underline{m})^{-1} * id_\tau}
				 	&&& \underline{m} * \tau \ar[r]^\sim
				 	& \tau*\underline{m'}\ar[rrr]^{id_\tau*\mathcal L^H(\sigma,\underline{m'})}
				 	&&& \tau *	\delta|_{m'}(\sigma) ,	} 
		 	$$
		 	where $*$ denotes both the concatenation of paths and homotopies. Recall that the concatenation  $b*a$ is a representative of the product $[a]\circ [b]$ in $\Pi_1$.
\end{proof}

\subsection{Examples of homotopy groups of NQ-manifolds}

In this subsection, we survey some examples and special cases of the above, mostly already present in the literature. 

	$$
	 \resizebox{0.99\textwidth}{!}{
	\xymatrix@C=1pt{ 
		&&\hbox{Lie $\infty$-algebroids} \ar[ld]^{\supset}\ar[rd]^{\supset}&&\\
		&\txt{Lie algebroids,\\ Ex. \ref{ex:LieAlgebroid}}\ar[rd]^{\supset}\ar[ld]^{\supset}&&\txt{Lie $\infty$-algebras, \\Ex \ref{ex:LieInfinityAlgebra}}\ar[ld]^{\supset}\ar[rd]^{\supset}&\\
		\txt{Tangent Lie algebroids,\\ Ex. \ref{ex:manifold}}&&\txt{Lie algebras,\\ Ex. \ref{ex:LieAlgebra}}&&\txt{Nilpotent Lie $\infty$-algebras,\\ Ex. \ref{ex:LieAlgebraNilpotent}}
	}
	}
	$$

\begin{example}
	\label{ex:manifold}
	\normalfont 
	Let $M$ be a manifold. Then the fundamental groupoid, fundamental groups, resp. $n$-th homotopy groups of $T[1]M$ are the fundamental groupoid, fundamental group resp. $n$-th homotopy groups of the manifold $M$.
\end{example}

\begin{example}
	\label{ex:LieAlgebra}
	\normalfont
	The fundamental groupoid and the fundamental group of a Lie algebra ${\mathfrak g}$ (seen as an NQ-manifold $\mathfrak g[1]$ over a point $pt$) is the simply-connected Lie group $\mathbf G$ integrating $ {\mathfrak g}$, see \cite{1707.00265,MR1973056}.
	For all $n \geq 2$, the $n$-th homotopy groups of ${\mathfrak g}[1]$ coincide with the $n$-th homotopy groups of the Lie group $\mathbf G$ (see \cite{Zhu}, Example 3.5).
	In equation:
	 $$ \mathbf \Pi (\mathfrak g[1]) = \pi_1(\mathfrak g[1], pt) = \mathbf G \hbox{ and for all $n \geq 2$: } \pi_n(\mathfrak g[1],pt)=\pi_n(\mathbf G, 1_\mathbf G). $$
\end{example}

\begin{example}
	\label{ex:LieAlgebroid}
	\normalfont
	For a Lie algebroid $A$ our definitions of fundamental groupoid, fundamental groups and $n$-th homotopy groups reduce to the definition given in \cite{1707.00265,MR1973056, Zhu}. 
	The fundamental groupoid is the source $s$-connected topological groupoid $\mathbf G$ that integrates the Lie algebroid \cite{1707.00265}: it is a smooth manifold in a neighborhood of $ M$. This idea has been widely used in the literature (cf. e.g. \cite{MR1973056, zbMATH01686785}).

	If the Lie algebroid is \emph{integrable}, that is when $ \mathbf G$ is a Lie groupoid, it is proven in \cite{Zhu} that the fundamental group at a given point $m$ is the isotropy $I_m( \mathbf G)= \mathbf G |_m^m$ of this groupoid at $m$, and the homotopy groups are the homotopy groups of $\mathbf G |_m = s^{-1}(m)$ of the fibers of the source map.
	In equation:
	 $$ \mathbf \Pi (A[1])  = \mathbf G, \pi_1(A[1], m)=I_m(\mathbf G) \hbox{ and for all $n \geq 2$: } \pi_n(A[1],m)=\pi_n(\mathbf G |_m, m) .$$
\end{example}

Let us state an immediate consequence of this example:
\begin{proposition}
\label{prop:Homot_grs_Of_Lie_oid}
Let $A$ be a Lie algebroid which is longitudinally integrable (\emph{i.e.}~the restriction of the fundamental groupoid $  \mathbf G$ of $A$ to every leaf is a smooth manifold).
Then for all $ n \geq 2$ and $m \in M$, we have $\pi_n(A[1],m)\simeq  \pi_n( \mathbf G|_m , m)  $ and  $\pi_1(A[1], m)=I_m(\mathbf G)$.  
\end{proposition}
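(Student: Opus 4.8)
The plan is to deduce the statement from the integrable case already recorded in Example \ref{ex:LieAlgebroid}, by restricting everything to a single leaf, where the hypothesis of longitudinal integrability becomes honest integrability. First I would invoke Lemma \ref{lem:restrLeaf}: if $L$ denotes the leaf through $m$, then the restriction $\mathfrak i_L A$ of the Lie algebroid $A$ to $L$ is again a Lie algebroid and $\pi_n(A,m) = \pi_n(\mathfrak i_L A, m)$ for all $n \geq 1$. It therefore suffices to compute the homotopy and fundamental groups of $\mathfrak i_L A$.

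Next I would identify the fundamental groupoid of $\mathfrak i_L A$. As in the proof of Lemma \ref{lem:restrLeaf}, the base map of any $A$-path from the connected interval $I$ cannot leave its leaf, so the homotopy classes of paths valued in $\mathfrak i_L A$ are exactly those elements of $\mathbf \Pi(A) = \mathbf G$ whose source and target both lie in $L$; that is, $\mathbf \Pi(\mathfrak i_L A) = \mathbf G|_L^L$, and its source fibers coincide with those of $\mathbf G$ over points of $L$, so it is still source-simply-connected. The hypothesis of longitudinal integrability says precisely that $\mathbf G|_L^L$ is a smooth manifold, hence a Lie groupoid, so $\mathfrak i_L A$ is an integrable Lie algebroid. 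I can then apply the integrable case of Example \ref{ex:LieAlgebroid} to $\mathfrak i_L A$, obtaining $\pi_1(\mathfrak i_L A, m) = I_m(\mathbf G|_L^L)$ and $\pi_n(\mathfrak i_L A, m) = \pi_n((\mathbf G|_L^L)|_m, m)$ for $n \geq 2$.

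Finally I would match these groups against the ones in the statement by bookkeeping of source and target fibers. The isotropy is unchanged, $I_m(\mathbf G|_L^L) = (\mathbf G|_L^L)|_m^m = \mathbf G|_m^m = I_m(\mathbf G)$, since both source and target of an isotropy element at $m$ automatically lie in $L$. For the source fibers, any $g \in \mathbf G$ with $s(g)=m\in L$ has $t(g)$ in the same leaf as $m$, namely $L$, so $\mathbf G|_m = s^{-1}(m) \subset t^{-1}(L)$ already lies in $\mathbf G|_L^L$; thus $(\mathbf G|_L^L)|_m = \mathbf G|_m$ as sets, and as smooth manifolds since $\mathbf G|_m$ is a source fiber of the Lie groupoid $\mathbf G|_L^L$. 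Substituting into the previous paragraph gives $\pi_1(A,m) = I_m(\mathbf G)$ and $\pi_n(A,m) = \pi_n(\mathbf G|_m, m)$ for $n\geq 2$, as desired.

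The only point requiring genuine care is the identification $\mathbf \Pi(\mathfrak i_L A) = \mathbf G|_L^L$ together with the claim that this restricted groupoid is the source-simply-connected integrating groupoid of $\mathfrak i_L A$; everything else is a formal check of source and target fibers. What makes this not entirely automatic is that $\mathbf G$ itself need not be a smooth manifold globally, so one cannot simply restrict a global Lie-groupoid structure to $L$: the smoothness of $\mathbf G|_m$ is supplied only by the longitudinal integrability hypothesis applied leafwise, and it is exactly this input that upgrades $\mathfrak i_L A$ to an integrable Lie algebroid and thereby licenses the use of Example \ref{ex:LieAlgebroid}.
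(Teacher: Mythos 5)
Your proof is correct and follows essentially the same route as the paper, whose entire argument is the one-line instruction to apply the integrable case of Example \ref{ex:LieAlgebroid} to the restriction of $A$ to the leaf $L$ through $m$. You have merely made explicit the steps the paper leaves implicit --- the use of Lemma \ref{lem:restrLeaf}, the identification $\mathbf{\Pi}(\mathfrak{i}_L A)=\mathbf{G}|_L^L$, and the bookkeeping of source fibers and isotropy groups --- all of which are carried out correctly.
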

\begin{proof}
It suffices to apply the second part of Example \ref{ex:LieAlgebroid} to the restriction of $A$ to the leaf $L$ through~$m$.
\end{proof}

\begin{example} Let us review some results by Berglund
	\label{ex:LieAlgebraNilpotent}.
	\normalfont
	\cite{Berlund}
	For a \emph{nilpotent Lie $\infty$-algebra} $ \mathfrak g_\bullet$, \emph{i.e.~}a  Lie $\infty$-algebra concentrated in degrees less or equal to $-2$:
	$$ \stackrel{ \mathrm  d}{\to} {\mathfrak g}_{-3}   \stackrel{  \mathrm  d}{\to} {\mathfrak g}_{-2} \to 0 $$
	the fundamental groupoid and fundamental groups are reduced to $0$ and, for all $n \geq 2$, the $n$-th homotopy groups are isomorphic to the cohomologies $ H^{-n}({\mathfrak g}_\bullet)$ of the above complex in degree $-n$.
	In equation:
	 $$ \mathbf \Pi (\mathfrak g_\bullet) = \pi_1(\mathfrak g_\bullet, pt) = \{ 0 \} \hbox{ and for all $n \geq 2$: } \pi_n(\mathfrak g_\bullet,m)= H^{-n}(\mathfrak g_\bullet) .$$
	
	Explicitly, an NQ-manifold morphism $ \sigma \colon S^n \to {\mathfrak g}_\bullet $ for $n \geq 2$ decomposes as $ \sum_{i=-n}^{-2} \alpha_i  $ with 
	 $\alpha_n \in  \Omega^n (S^n) \otimes {\mathfrak g}_{-n} , \dots, \alpha_2 \in  \Omega^{2} (S^n) \otimes {\mathfrak g}_{-2}   $.
	 Then $ \int_{S^n} \alpha_n  \in {\mathfrak g}_{-n} $ is a $d$-closed element and its class in $ H^{-n}({\mathfrak g}_\bullet)$  entirely determines the class up to homotopy of $\sigma $. 
\end{example}

\begin{example}
	\label{ex:LieInfinityAlgebra}
	\normalfont
	Consider an NQ-manifold $\mathfrak g_\bullet$ over a point $pt$, \emph{i.e.~}a negatively graded Lie $\infty$-algebra:
	$$  \dots  \stackrel{d}{\to} {\mathfrak g}_{-2}   \stackrel{d}{\to} {\mathfrak g}_{-1}.$$

Let us give a long exact sequence describing $\pi_n(\mathfrak g_\bullet,pt)$.
According to the homotopy transfer theorem (see, e.g. \cite{zbMATH06043075}), this Lie $\infty$-algebra is homotopy equivalent to a ``minimal model'', \emph{i.e.}~a Lie $\infty$-algebra with trivial differential constructed on the cohomology of the previous complex: 
$$ \dots \stackrel{0}{\to}  H^{-2}({\mathfrak g_\bullet}) \stackrel{0}{\to}   H^{-1}({\mathfrak g_\bullet}) .$$
The $2$-ary bracket of this Lie $\infty $-algebra does not depend on the choices of a minimal model and satisfies the graded Jacobi identity, so that $H^{-1}({\mathfrak g}_{\bullet}) $ is a Lie algebra.

The projection onto the minus one component is a  surjective submersion of a Lie $\infty$-algebra over a Lie algebra:    
	  \begin{equation} 
 \label{eq:Subm} \oplus_{i\geq 1}  H^{-i}({\mathfrak g}_{\bullet})  \to  H^{-1}({\mathfrak g}_{\bullet}) . 
  \end{equation} 
The fiber is the nilpotent Lie $ \infty$-algebra $ \oplus_{ i\geq 2}  H^{-i}({\mathfrak g}_{\bullet}) $. There is a unique Ehresmann connection (which is of course complete in view of Proposition \ref{prop:existeEhresmann}) given by $H= H^{-1}({\mathfrak g}_{\bullet})$ itself. We are therefore in the setting of  Theorem \ref{thm:snake} with all base manifolds being points, so that there is a long exact sequence of groups
	 $$ \dots \to H^{-n}( {\mathfrak g}_{\bullet}) \to   \pi_n( {\mathfrak g}_{\bullet},pt)  \to  \pi_{n}(\mathbf G, 1_{\mathbf G}) \stackrel{\delta}{\to} H^{-n+1}( {\mathfrak g}_{\bullet} ) \to \cdots $$
	 $$ \cdots  \to  \pi_2( {\mathfrak g}_{\bullet}, pt) \to 0 \to 0 \to \pi_1( {\mathfrak g}_{\bullet},pt) \to \mathbf G  \to 0 ,$$
	where $\mathbf G $ is the simply-connected Lie group integrating the Lie algebra $H^{-1}({\mathfrak g}_\bullet) $. Here, we used Example \ref{ex:LieAlgebraNilpotent}, Example \ref{ex:LieAlgebra} and the fact that $\pi_2(\mathbf G)=0$ for any Lie group.
	In particular, $ \pi_1( {\mathfrak g}_{\bullet},pt) \simeq \mathbf G  $.
\end{example}

\section{Higher holonomies of singular leaves}
\label{sec:leaf}

According to \cite{LLS}, a universal NQ-manifold can be associated to most singular foliations, in particular to those generated by real analytic local generators - which form a large class. 

We recall these constructions in Subsection \ref{sec:recapfol}, then define the homotopy groups of a singular foliation as the homotopy groups its universal NQ-manifold $\NQ{U}^\fol{F}$ in Subsection \ref{sec:homotopyGroups}.
In Subsection \ref{sec:isotropy}, we discuss how these  homotopy groups can be computed with the help of several long exact sequences derived from Theorem \ref{thm:snake}.
 
 We then consider a locally closed leaf $L$ of $ {\fol{F}}$. Upon replacing $M$ by a neighborhood of $L$, we can assume that there exists a surjective submersion $p \colon M \to L$ such that the composition of the anchor NQ-morphism $\NQ{U}^\fol{F}\to T[1]M$ with the surjection $T[1]p\colon T[1]M\to T[1]L$ is a fibration of NQ-manifolds over a Lie algebroid.
We first define and study Ehresmann connections for singular leaves in Section \ref{sec:Ehresmann_leaf}.
In Section \ref{sec:LESsing}, we show that Theorem \ref{thm:snake}, when applied in such a neighborhood of a singular leaf, gives a long exact sequence: we call higher holonomies of $\fol{F}$ at $L$ its connecting homomorphisms. 
We explain how this is related to the Androulidakis-Skandalis' holonomy groupoid, and how a Lie groupoid defining the singular foliation helps to construct it.

We will finish this section by focusing on several particular aspects of the higher holonomies, that reduce to well-known constructions, e.g.~the period map of Cranic-Fernandes \cite{MR1973056}.

\subsection{The universal NQ-manifold and homotopy groups of a singular foliation}

\subsubsection{The universal NQ-manifold  of a singular foliation}

\label{sec:recapfol}

 We recall and adapt the main results of \cite{LLS}.
Throughout this subsection,  ${\fol{F}}  $ is a singular foliation on a manifold $M$.

\begin{definition}\label{def:resolution} (Definition 2.1 in \cite{LLS})
	 A geometric resolution of ${\fol{F}}$ is a finite family of vector bundles $(E_{-i})_{i\geq 1}$ together with vector bundle morphisms $ E_{-i} \overset{d}{\to} E_{-i+1}$ and $\rho \colon E_{-1} \to TM$ such that the following sequence is a resolution of $ {\fol{F}}$ in the category of $\Cinfty{M} $-modules:
 $$ \dots \to \Gamma(E_{-3})  \to \Gamma(E_{-2}) \to  \Gamma(E_{-1}) \to  {\fol{F}}. $$
\end{definition}

\begin{remark}
\normalfont
 By the Serre-Swann theorem, a geometric resolution 
is a finite length projective resolution of ${\fol{F}}$ by finitely generated  $\Cinfty{M} $-modules.
\end{remark}

\begin{theorem}\label{theo:exists}
(Theorem 2.4 in \cite{LLS}) 
Every locally real analytic singular foliation on a manifold $M$ of dimension $n$ admits a geometric resolution of length $ n+1$ on every relatively compact open subset.
 \end{theorem}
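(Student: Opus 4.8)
The plan is to reduce the existence of a geometric resolution to commutative algebra over the ring of germs of real analytic functions, and then to transport the result to the smooth category by a flatness argument. Write $\mathcal{O}^{an}$ (resp.\ $\mathcal{C}^\infty$) for the sheaf of real analytic (resp.\ smooth) functions on $M$. The construction is essentially local, with a single finiteness/globalization step at the end, so I would first fix a point $x \in M$ and work in coordinates in which $\mathcal{F}$ is generated by finitely many vector fields $X_1, \dots, X_r$ that are real analytic.

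First I would build a resolution in the analytic category. Let $\mathcal{F}^{an} \subset (\mathcal{O}^{an})^{n}$ be the subsheaf of analytic vector fields generated by $X_1, \dots, X_r$ over $\mathcal{O}^{an}$. By Oka's coherence theorem the structure sheaf $\mathcal{O}^{an}$ is coherent, so $\mathcal{F}^{an}$ and all its successive syzygy sheaves are coherent; equivalently, since each local ring $\mathcal{O}^{an}_x \cong \mathbb{R}\{x_1, \dots, x_n\}$ is Noetherian, the kernel of any morphism of free $\mathcal{O}^{an}$-modules is locally finitely generated. Choosing a finite free cover at each stage then yields, on a neighbourhood of $x$, an exact complex of finitely generated free $\mathcal{O}^{an}$-modules surjecting onto $\mathcal{F}^{an}$.

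To bound the length I would use regularity: the local ring $\mathbb{R}\{x_1, \dots, x_n\}$ is regular of Krull dimension $n$, so by Auslander--Buchsbaum--Serre its global dimension equals $n$, and Hilbert's syzygy theorem shows that the $n$-th syzygy of any finitely generated module is free. Hence the analytic resolution can be truncated to an exact sequence $0 \to F_{-(n+1)} \to \cdots \to F_{-1} \to \mathcal{F}^{an} \to 0$ with all $F_{-i}$ finitely generated free (so locally free) $\mathcal{O}^{an}$-modules. The transfer to smooth functions then rests on Malgrange's theorem that $\mathcal{C}^\infty_x$ is flat over $\mathcal{O}^{an}_x$: applying $\mathcal{C}^\infty \otimes_{\mathcal{O}^{an}} (-)$ preserves exactness, and since $\mathcal{F}$ is by definition the smooth module generated by the $X_i$, flatness identifies $\mathcal{C}^\infty \otimes_{\mathcal{O}^{an}} \mathcal{F}^{an}$ with $\mathcal{F}$. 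This produces, near $x$, a smooth free resolution of $\mathcal{F}$ of length at most $n+1$ whose terminal module is locally free.

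Finally I would globalize on a relatively compact open $U$: covering $U$ by finitely many such neighbourhoods and using a partition of unity, relative compactness upgrades the local finite generation of $\mathcal{F}$ and of each successive smooth syzygy to global finite generation, which lets me choose trivial bundles $E_{-1}, \dots, E_{-n}$ surjecting onto the successive syzygies. The $n$-th smooth syzygy is finitely generated and locally free, hence projective, so by the Serre--Swann theorem it is the section module of a genuine vector bundle $E_{-(n+1)}$, closing off the resolution and giving a geometric resolution of length $n+1$. I expect the main obstacle to be precisely the interface between the two categories: one must invoke Malgrange flatness carefully enough to transport both finite generation and local freeness of the analytic syzygies to their smooth counterparts while keeping the tensored complex exact and equal to $\mathcal{F}$ rather than to some larger module; the secondary difficulty is ensuring, via relative compactness and Serre--Swann, that the terminal syzygy is an honest constant-rank vector bundle and not merely locally free at the level of germs.
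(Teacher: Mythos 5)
The paper does not actually prove this statement---it is quoted as Theorem 2.4 of \cite{LLS}---and your argument follows the same route as the proof given in that reference: Noetherianity of the ring of real analytic germs together with Hilbert's syzygy theorem (regularity via Auslander--Buchsbaum--Serre) to produce local analytic free resolutions of bounded length, Malgrange's flatness of smooth germs over analytic germs to transport exactness and the identification with $\mathcal F$ to the smooth category, and relative compactness to globalize over a finite cover. Your proposal is correct in outline, and the two delicate points you flag yourself (the flatness interface, and upgrading the terminal syzygy to an honest vector bundle of locally constant rank---for which a Schanuel-type argument combined with the locally constant ranks of the analytic resolutions does the job) are precisely the points addressed in \cite{LLS}.
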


Among the NQ-manifolds over $M$  {whose basic singular foliation is included} in $\fol{F}$, there is a distinguished class, which we now define.

\begin{definition}
 A \emph{universal NQ-manifold of a singular foliation  $ {\fol{F}}$} is an NQ-manifold $\NQ{U}^\fol{F}$ such that for any associated Lie $\infty$-algebroid $(E_\bullet,l_\bullet,\rho)$ the underlying sequence  $E_{-i}\overset{l_1}{\to} E_{-i+1}$ together with the anchor map $E_{-1}\overset{\rho}{\to} TM$ form a geometric resolution of $ {\fol{F}}$.
\end{definition}

The use of the word ``universal'' is justified by the following Theorem:

\begin{theorem}\label{theo:onlyOne}
(Theorem 1.7 in \cite{LLS})
	\texttt{	"Universality"}.
	Universal NQ-manifolds of $ {\fol{F}}$ are terminal objects in the category where objects are NQ-manifolds over $M$ inducing sub-foliations of $ {\fol{F}}$, and morphisms are homotopy classes of NQ-manifold morphisms over $M$.
\end{theorem}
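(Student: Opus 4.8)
The plan is to establish terminality directly, which amounts to two claims: given any object $\mathbb W=(E'_\bullet,M,Q')$ of the category---so an NQ-manifold over $M$ whose anchor satisfies $\rho'(\Gamma(E'_{-1}))\subseteq\mathcal F$---there exists an NQ-morphism $\Phi\colon\mathbb W\to\mathbb V$ over $\mathrm{id}_M$, and any two such morphisms are homotopic. Recall that $\Phi$ is encoded by its Taylor coefficients $\phi_n\colon S^n(E')\to E$ and that the defining condition $\Phi^\ast\circ Q=Q'\circ\Phi^\ast$ unwinds, order by order in arity, into a hierarchy of equations. The key structural input is that $\mathbb V$ is universal (Definition~\ref{def:resolution}): its linear part
$$\cdots\to E_{-2}\xrightarrow{l_1}E_{-1}\xrightarrow{\rho}\mathcal F\to 0$$
is exact, i.e.~a resolution of $\mathcal F$ by finitely generated projective $\mathcal C^\infty_M$-modules, whereas the linear part of $\mathbb W$ is merely an augmented complex of projectives, augmented by $\rho'$ onto $\mathcal F'\subseteq\mathcal F$ using $\rho'\circ l'_1=0$.

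First I would produce the leading Taylor coefficient $\phi_1$ as a chain map of linear parts lifting the inclusion $\mathcal F'\hookrightarrow\mathcal F$. In the lowest degree this is a lift of $\Gamma(E'_{-1})\xrightarrow{\rho'}\mathcal F$ through the surjection $\Gamma(E_{-1})\xrightarrow{\rho}\mathcal F$, which exists because $\Gamma(E'_{-1})$ is projective; in higher degrees $\phi_1$ is produced inductively from the exactness of the target resolution. This is exactly the fundamental comparison theorem of homological algebra, and it simultaneously yields uniqueness of $\phi_1$ up to chain homotopy.

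The substance of the theorem is then to promote this chain map to a full Lie $\infty$-morphism by constructing the higher coefficients $\phi_2,\phi_3,\dots$ recursively. Assuming $\phi_1,\dots,\phi_n$ have been chosen so that the morphism relation holds up to arity $n$, the failure to extend is measured by a correction term built from the already-constructed $\phi_{\le n}$, the brackets $l_k,l'_k$ and the differentials. The two crucial observations---both of which I would verify by a degree and Leibniz bookkeeping---are that this term is $\mathcal C^\infty_M$-linear (the anchor nonlinearity, governed by the $l_2$ Leibniz rule at the bottom, enters only through $\phi_1$ and cancels) and that it is $l_1$-closed. Exactness of the target resolution then exhibits it as an $l_1$-boundary, and any primitive serves as $\phi_{n+1}$, projectivity of $S^{n+1}(E')$ guaranteeing that the primitive may be chosen as a bundle map. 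Iterating builds $\Phi$.

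For uniqueness up to homotopy, given two morphisms $\Phi,\Psi$ I would run the same recursion one dimension higher: a homotopy is an NQ-morphism $I\times\mathbb W\to\mathbb V$, equivalently a differential graded algebra morphism into $(\Gamma(S(V))\,\tilde\otimes\,\Omega^\bullet_I,\,Q+\mathrm{d}_{dR})$, whose Taylor coefficients are again constructed order by order, the contracting homotopy of the acyclic resolution playing the role that exactness played above. The main obstacle throughout is precisely this bookkeeping: one must check at every arity that the obstruction lands in the image of $l_1$, which is where the combinatorics of the $L_\infty$-relations and the compatibility with the anchor have to be controlled; everything else reduces to the classical fact that maps into an acyclic complex of projectives exist and are unique up to homotopy.
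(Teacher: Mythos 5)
The paper does not prove this statement at all: it is imported verbatim as Theorem 1.7 of \cite{LLS}, so the only proof to compare against is the one in that reference. Your proposal correctly reconstructs that argument in its essentials --- lift the inclusion $\mathcal F'\hookrightarrow\mathcal F$ to a chain map of linear parts via the classical comparison theorem for projective resolutions, then build the higher Taylor coefficients (and, one level up, the homotopy into $\Gamma(S(V))\,\tilde\otimes\,\Omega^\bullet_I$) by induction on arity, checking at each stage that the obstruction is $\mathcal C^\infty_M$-linear and $l_1$-closed and killing it by exactness of the geometric resolution --- which is precisely the strategy of \cite{LLS}.
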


Since in a category two universal objects are canonically isomorphic:

\begin{corollary} \label{coro:unique}
(Corollary 1.8 in \cite{LLS}) {\texttt{"Uniqueness"}}.
	Two  universal NQ-manifolds of a singular foliation $ \mathcal{F}$ are homotopy equivalent in a unique up to homotopy manner.
\end{corollary}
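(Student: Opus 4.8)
The plan is to deduce this purely formally from the universality statement, Theorem \ref{theo:onlyOne}, using the elementary fact that a terminal object of any category is unique up to a unique isomorphism. Let $\mathcal{C}$ denote the category appearing in Theorem \ref{theo:onlyOne}, whose objects are NQ-manifolds over $M$ inducing sub-foliations of $\mathcal{F}$ and whose morphisms are homotopy classes of NQ-manifold morphisms over $M$. By Theorem \ref{theo:onlyOne}, every universal NQ-manifold of $\mathcal{F}$ is a terminal object of $\mathcal{C}$, so it suffices to translate the categorical statement ``two terminal objects are canonically isomorphic'' back into the language of NQ-manifolds.

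First I would record the categorical lemma. Let $\mathbb{U}_1,\mathbb{U}_2$ be two universal NQ-manifolds, hence two terminal objects of $\mathcal{C}$. Terminality of $\mathbb{U}_2$ produces a unique morphism $[f]\colon \mathbb{U}_1 \to \mathbb{U}_2$ in $\mathcal{C}$, and terminality of $\mathbb{U}_1$ a unique morphism $[g]\colon \mathbb{U}_2 \to \mathbb{U}_1$. The composite $[g]\circ[f]$ is an endomorphism of $\mathbb{U}_1$ in $\mathcal{C}$; but terminality of $\mathbb{U}_1$ forces the set of $\mathcal{C}$-endomorphisms of $\mathbb{U}_1$ to be a singleton, which necessarily contains the identity, so $[g]\circ[f] = [\mathrm{id}_{\mathbb{U}_1}]$. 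Symmetrically $[f]\circ[g] = [\mathrm{id}_{\mathbb{U}_2}]$. Thus $[f]$ is an isomorphism in $\mathcal{C}$ with inverse $[g]$, and it is the unique $\mathcal{C}$-morphism $\mathbb{U}_1 \to \mathbb{U}_2$.

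Next I would unwind what this means concretely. Since a morphism of $\mathcal{C}$ is a homotopy class of NQ-manifold morphisms over $M$, and composition in $\mathcal{C}$ is induced by composition of NQ-morphisms, the identities $[g]\circ[f] = [\mathrm{id}]$ and $[f]\circ[g] = [\mathrm{id}]$ say exactly that chosen representatives $f,g$ satisfy $g\circ f \sim \mathrm{id}_{\mathbb{U}_1}$ and $f\circ g \sim \mathrm{id}_{\mathbb{U}_2}$, where $\sim$ denotes homotopy of NQ-morphisms. This is precisely the assertion that $f$ is a homotopy equivalence of NQ-manifolds with homotopy inverse $g$. The clause ``in a unique up to homotopy manner'' is then nothing but the uniqueness of the $\mathcal{C}$-morphism $[f]$: any two NQ-morphisms $\mathbb{U}_1 \to \mathbb{U}_2$ realizing such an equivalence represent the same class $[f]$ and hence are homotopic.

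The argument is essentially formal and presents no genuine obstacle; the only point requiring care is the bookkeeping of the third paragraph, namely verifying that ``isomorphism in the homotopy category $\mathcal{C}$'' coincides with ``homotopy equivalence of NQ-manifolds'' and that ``canonical/unique isomorphism in $\mathcal{C}$'' coincides with ``unique up to homotopy''. Both rely only on the fact, already built into the definition of $\mathcal{C}$, that composition of NQ-morphisms descends to homotopy classes.
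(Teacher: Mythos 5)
Your proof is correct and follows exactly the paper's own route: the paper derives the corollary from Theorem \ref{theo:onlyOne} via the single observation that terminal objects of a category are unique up to unique isomorphism, which is precisely your argument. Your additional unwinding of ``isomorphism in the homotopy category'' into ``homotopy equivalence, unique up to homotopy'' is the same translation the paper leaves implicit, and it is sound because composition of NQ-morphisms descends to homotopy classes by construction of the category.
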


 We will denote a universal NQ-manifold of $\fol{F}$ by $\NQ{U}^{\fol{F}}$.
Singular foliations that admit a universal NQ-manifold admit a geometric resolution. 
In fact, the converse also holds:

\begin{theorem}  
(Theorem 1.6 in \cite{LLS}) \texttt{"Existence"}.
	\label{theo:existe}
If $\fol{F}$ admits a geometric resolution $(E_\bullet, d,\rho)$, then it admits a universal NQ-manifold $\NQ{U}^\fol{F}$ such that $\sh{\NQ{U}^\fol{F}}=\Gamma(\cdot, S^\bullet E^*_\bullet)$ and $l_1=d$ and $\rho_{\NQ{U}^\fol{F}}=\rho$.
\end{theorem}

We finish this section by two lemmas about restrictions of the universal NQ-manifold of a singular foliation to a transverse sub-manifold and to a leaf.

For any locally closed submanifold $ S \subset M$, we let $ {\fol{F}}|_S  \subset {\mathfrak X}(S)$ be the space obtained by restricting to $S$ vector fields in $ {\fol{F}}$ which are tangent to $S$.  We say that a submanifold $S \subset M$ is  \emph{${\fol{F}}$-transversal} if $ T_m S + T_m {\fol{F}} = T_m M $ for all $ m \in S$.
For a generic submanifold $S$, the space $\fol{F}|_S$ might not be finitely generated and hence might not be a singular foliation. However: 
\begin{lemma}\label{lem:transversefol0}
 The restriction  $ {\fol{F}}|_S$  to an ${\fol{F}}$-transversal $S$ 
is a singular foliation on $S$. Moreover, the restriction of any universal NQ-manifold of $ {\fol{F}}$ to $S$ is a universal NQ-manifold of $ {\fol{F}}|_S$. 
\end{lemma}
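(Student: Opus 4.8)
The plan is to treat the two assertions separately, the foliation statement being quick and the universality statement carrying the homological content. For the first assertion, the module and bracket-closure properties are immediate: if $X,Y\in\mathcal F$ are tangent to $S$ then $[X,Y]\in\mathcal F$ is again tangent to $S$, and for $f\in\mathcal C^\infty_S$ one extends $f$ to $M$ and multiplies, so $\mathcal F|_S$ is a $\mathcal C^\infty_S$-submodule of $\mathfrak X(S)$ closed under the bracket. The essential point is local finite generation, which I would read off the resolution. $\mathcal F$-transversality says precisely that $\pi_N\circ\rho\colon E_{-1}|_S\to N_S$ (with $N_S=TM|_S/TS$ the normal bundle and $\pi_N$ the projection) is fibrewise surjective, so its kernel $K_{-1}:=\ker(\pi_N\circ\rho)\subseteq E_{-1}|_S$ is a subbundle of constant rank, consisting of those elements whose anchor is tangent to $S$. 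I would then check $\mathcal F|_S=\rho(\Gamma(K_{-1}))$ by the two obvious inclusions: a section $k$ of $K_{-1}$ extends to a section $\tilde k$ of $E_{-1}$, and $\rho(\tilde k)\in\mathcal F$ is tangent to $S$ with restriction $\rho(k)$; conversely a tangent $X\in\mathcal F$ can be written $X=\rho(e)$, and $\rho(e)|_S=X|_S\in TS$ forces $e|_S\in K_{-1}$. Since $K_{-1}$ has finite rank, this exhibits $\mathcal F|_S$ as locally finitely generated.

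For the second assertion, the candidate restricted NQ-manifold has linear part $\dots\to E_{-2}|_S\xrightarrow{\mathrm d}K_{-1}\xrightarrow{\rho}TS$, where $\mathrm d$ does land in $K_{-1}$ because $\rho\circ\mathrm d=0$ forces $\mathrm{im}\,\mathrm d\subseteq\ker(\pi_N\circ\rho)=K_{-1}$; the higher brackets restrict to $K_\bullet$ exactly as in the fibre construction following Definition \ref{def:fibrations}, and the anchor is $TS$-valued by construction, so this is an NQ-manifold on $S$ whose basic foliation is $\mathcal F|_S$. By the Existence and Uniqueness theorems (Theorem \ref{theo:existe}, Corollary \ref{coro:unique}) it then suffices to prove that this linear part is a geometric resolution of $\mathcal F|_S$, that is, that the displayed complex is exact. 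Exactness at $\mathcal F|_S$ is the surjectivity $\mathcal F|_S=\rho(\Gamma(K_{-1}))$ from the previous paragraph; exactness at $E_{-i}|_S$ for $i\ge 2$ is, morally, the vanishing $\mathrm{Tor}^{\mathcal C^\infty_M}_{\ge 1}(\mathcal F,\mathcal C^\infty_S)=0$ (the restricted complex computes these groups, and the terms $E_{-i}|_S$ are unaffected by replacing $E_{-1}|_S$ with $K_{-1}$, since $\ker(\mathrm d\colon E_{-2}|_S\to K_{-1})=\ker(\mathrm d\colon E_{-2}|_S\to E_{-1}|_S)$); exactness at $K_{-1}$ itself is the one genuinely new spot.

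I would deduce all these vanishing statements from the transverse normal form. Near any $m\in S$ one chooses $c=\mathrm{rank}(N_S)$ vector fields $Z_1,\dots,Z_c\in\mathcal F$ whose values span $N_S$ along $S$; using that flows of elements of $\mathcal F$ preserve $\mathcal F$ \cite{AS09}, one identifies a neighbourhood of $m$ with $S\times\mathbb R^c$ in such a way that $\mathcal F\cong\mathcal F|_S\times T\mathbb R^c$ as singular foliations \cite{Cerveau,Dazord,AS09}. A geometric resolution of such a product is the tensor product of a geometric resolution $\mathcal E_\bullet$ of $\mathcal F|_S$ with the length-one tangent resolution $T\mathbb R^c\xrightarrow{\ =\ }\mathfrak X(\mathbb R^c)$ of the transverse directions; restricting to $S\times\{0\}$ and passing to $K_{-1}$ removes precisely the acyclic Koszul contribution of the $T\mathbb R^c$ factor and returns $\mathcal E_\bullet$. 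Hence the restricted complex is locally exact, therefore exact, and the restriction of the given universal NQ-manifold is a universal NQ-manifold of $\mathcal F|_S$. The main obstacle is exactly this last step: establishing the product normal form transverse to $S$ and carrying out the Koszul cancellation that identifies the restricted resolution with $\mathcal E_\bullet$. The purely algebraic properties and the construction and identification of the subbundle $K_{-1}$ are by comparison routine.
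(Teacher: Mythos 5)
Your proposal is correct and follows essentially the same route as the paper: the paper's own proof cites \cite{AS09} (Proposition 1.10, Item (b)) for the first statement and then simply observes that the complex $\dots \to E_{-2}|_S \to \rho^{-1}(TS) \to TS$ --- exactly your $K_{-1}$-complex --- is both a sub-Lie $\infty$-algebroid and a geometric resolution of ${\mathcal F}|_S$, so your local splitting and Koszul-cancellation argument merely makes explicit the exactness that the paper dismisses as a ``trivial observation.'' One caution for your write-up: invoking a geometric resolution $\mathcal E_\bullet$ of ${\mathcal F}|_S$ is a priori circular (its local existence is part of what the lemma delivers), but this is easily repaired in the spirit of your own remark by instead resolving $\mathcal C^\infty_S$ over $\mathcal C^\infty_M$ via the Koszul complex on the transverse coordinates and computing $\mathrm{Tor}_{\bullet}(\mathcal F, \mathcal C^\infty_S)$ directly from the product normal form.
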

\begin{proof}
The first statement is proven in \cite{AS09}, Proposition 1.10,  Item (b). The second statement follows from the trivial observation that for any geometric resolution $ (E,\mathrm d, \rho)$ of $ {\fol{F}}$ and any  ${\fol{F}}$-transversal $S$ as above, the complex:
 $$   \dots \to E_{-3} |_{S} \stackrel{\mathrm d}{\longrightarrow} E_{-2} |_{S} \stackrel{\mathrm d}{\longrightarrow}  \rho^{-1}(TS)     \stackrel{\rho}{\longrightarrow}  TS $$
 is both a geometric resolution of  $ {\fol{F}}|_S$ and a sub-Lie $ \infty$-algebroid.
\end{proof}

Let $L$ be a leaf of $ \fol{F}$. We recall from \cite{AZ13} the following construction:
there is a unique transitive Lie algebroid over $L$, called \emph{holonomy Lie algebroid of $L$}, denoted by $ A_L$, whose sheaf of sections  is $ \fol{F} / I_L \fol{F}$, with $ I_L$ the sheaf of smooth functions on $M$ vanishing along $L$.

\begin{example}
\label{ex:regularLeafHolonomy}
\normalfont
    For a regular\footnote{A regular point is a point around which all leaves have the same dimension (and are called regular leaves). Recall that if a geometric resolution of finite length exists, then all regular leaves have the same dimension.} leaf $L$, the holonomy Lie algebroid is the tangent Lie algebroid. In equation: $ A_L =TL$. 
\end{example}

Consider the restriction $\mathfrak i_L \NQ{U}^\fol{F} $ of any universal NQ-manifold $\NQ{U}^\fol{F} $.
This is an NQ-manifold over $L$, which is transitive over $L$. 

\begin{lemma}\label{lem:transversefol}
The restriction $\mathfrak i_L \NQ{U}^\fol{F}$ of any universal NQ-manifold $\NQ{U}^\fol{F} $  to a leaf $L$ of a singular foliation is an NQ-manifold such that for any $E_\bullet$ satisfying $\sh{\NQ{U^\fol{F}}}=\Gamma(\cdot, S^\bullet E_\bullet^*)$:
 \begin{enumerate}
     \item the differential $ \mathrm d: \mathfrak i_L E_{-i-1} \to \mathfrak i_L E_{-i}$ has constant rank along $L$,
     \item the quotient space $ A_L := \mathfrak i_L E_{-1} / \mathrm d \mathfrak i_L E_{-2}$:
     \begin{enumerate}
         \item comes equipped with a transitive Lie algebroid structure,
         \item which does not depend on the choice of a universal NQ-manifold $\NQ{U}^\fol{F} $,
         \item  and coincides with Androulidakis-Zambon's holonomy Lie algebroid $A_L$ of the leaf $L$.
     \end{enumerate}
 \end{enumerate}
\end{lemma}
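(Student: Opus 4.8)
The plan is to treat the two items separately; Item~1 (constancy of rank) is where all the work lies, after which Item~2 follows quickly by transporting the Androulidakis--Zambon structure through a canonical isomorphism. I would open by restricting the geometric resolution
\[
\cdots \to \Gamma(E_{-2}) \xrightarrow{\mathrm d} \Gamma(E_{-1}) \xrightarrow{\rho} \mathcal F \to 0
\]
to $L$, that is, by applying the right-exact functor $-\otimes_{\mathcal C^\infty_M}\mathcal C^\infty_L$. Since $\Gamma(E_{-i})\otimes_{\mathcal C^\infty_M}\mathcal C^\infty_L=\Gamma(\mathfrak i_L E_{-i})$ and $\mathcal F\otimes_{\mathcal C^\infty_M}\mathcal C^\infty_L=\mathcal F/I_L\mathcal F$, right-exactness yields an exact sequence $\Gamma(\mathfrak i_L E_{-2})\xrightarrow{\mathrm d}\Gamma(\mathfrak i_L E_{-1})\to \mathcal F/I_L\mathcal F\to 0$, hence a canonical isomorphism $\mathcal F/I_L\mathcal F\cong \Gamma(\mathfrak i_L E_{-1})/\mathrm d\,\Gamma(\mathfrak i_L E_{-2})$ that makes no reference to the chosen $\mathbb U^{\mathcal F}$. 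This identification will deliver parts (b) and (c) essentially for free.

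The heart of the argument is to show that each $\mathrm d\colon \mathfrak i_L E_{-i-1}\to \mathfrak i_L E_{-i}$, as well as $\rho$, has locally constant rank along $L$. The key observation is that the capped fibrewise complex $\cdots \to E_{-1}|_m \xrightarrow{\rho_m} T_mM\to 0$ computes $\mathrm{Tor}^{\mathcal C^\infty_M}_\bullet(\mathfrak X(M)/\mathcal F,\mathbb R_m)$: indeed $\cdots\to\Gamma(E_{-1})\xrightarrow{\rho}\Gamma(TM)\to \mathfrak X(M)/\mathcal F\to 0$ is a projective resolution of $\mathfrak X(M)/\mathcal F$ (exactness at $\Gamma(E_{-1})$ uses $\ker\rho=\mathrm{Im}\,\mathrm d$), so tensoring with the residue field $\mathbb R_m=\mathcal C^\infty_M/\mathfrak m_m$ gives exactly this complex. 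These Tor-dimensions depend only on the germ of the module $\mathfrak X(M)/\mathcal F$ at $m$; since flows of vector fields in $\mathcal F$ preserve $\mathcal F$ \cite{AS09} and act transitively on $L$, they induce germ isomorphisms and force the fibrewise homology dimensions $h_i(m)$ to be constant along $L$. A purely numerical induction then finishes the job: writing $r_i(m)=\mathrm{rank}(\mathrm d|_m\colon E_{-i}|_m\to E_{-i+1}|_m)$ (with $\mathrm d=\rho$ for $i=1$) and $\epsilon_i=\mathrm{rank}\,E_{-i}$, rank–nullity gives $h_0(m)=\epsilon_0-r_1(m)$ and $h_i(m)=\epsilon_i-r_i(m)-r_{i+1}(m)$ for $i\ge 1$; constancy of all $h_i$ and $\epsilon_i$ yields $r_1(m)=\dim L$ constant, and then $r_{i+1}=\epsilon_i-r_i-h_i$ constant by induction. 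The \textbf{main obstacle} is precisely this constancy of the fibrewise homology dimensions along the leaf, i.e.\ justifying the flow-invariance cleanly; everything after it is bookkeeping.

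For Item~2, constancy of rank makes $\mathrm d(\mathfrak i_L E_{-2})\subset \mathfrak i_L E_{-1}$ a genuine subbundle, so $A_L:=\mathfrak i_L E_{-1}/\mathrm d\,\mathfrak i_L E_{-2}$ is a vector bundle with $\Gamma(A_L)=\Gamma(\mathfrak i_L E_{-1})/\mathrm d\,\Gamma(\mathfrak i_L E_{-2})\cong \mathcal F/I_L\mathcal F$ by the first paragraph. Transporting the transitive Lie algebroid structure that Androulidakis--Zambon \cite{AZ13} put on $\mathcal F/I_L\mathcal F$ through this isomorphism equips $A_L$ with a transitive Lie algebroid structure, proving (a) and (c) at once; its anchor is the descent $\bar\rho\colon A_L\to TL$ of $\rho$ (well defined since $\rho\circ\mathrm d=0$ and $\rho$ is valued in $\mathcal F$, tangent to $L$), which is fibrewise surjective because $\mathrm{rank}(\rho_m)=\dim L$, giving transitivity. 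Finally, independence of the choice of $\mathbb U^{\mathcal F}$ — part (b) — is immediate, since the target $\mathcal F/I_L\mathcal F$ of the canonical isomorphism is defined without reference to any resolution. If one wishes to check that the Lie bracket on $A_L$ is the one genuinely induced by the binary bracket $l_2$ of $\mathbb U^{\mathcal F}$, it suffices to note that $\rho$ intertwines $l_2$ with the Lie bracket of vector fields and passes to the quotient, matching the Androulidakis--Zambon bracket.
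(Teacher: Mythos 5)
Your proposal is correct, but it proves the lemma by a genuinely different mechanism than the paper. For Item 1 the paper gives a two-line differential-geometric argument: choosing a splitting $\tau\colon TL\to \mathfrak i_L E_{-1}$ of the anchor along the leaf, it defines connections $\nabla^{(i)}_u e := [\tau(u),e]$ on each $\mathfrak i_L E_{-i}$ and observes $\nabla^{(i+1)}_u(\mathrm d e)=\mathrm d\,\nabla^{(i)}_u e$, i.e.~$\mathrm d$ is parallel, hence of constant rank along the connected leaf; note this uses the binary bracket of $\mathbb U^{\mathcal F}$, not just the underlying resolution. You instead prove constancy of rank homologically: the capped fibrewise complex computes $\mathrm{Tor}_\bullet(\mathfrak X(M)/\mathcal F,\mathbb R_m)$, these dimensions are invariant under the flows of elements of $\mathcal F$ (which preserve $\mathcal F$ by \cite{AS09} and act transitively on $L$), and a rank--nullity induction converts constant homology dimensions into constant ranks. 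This buys you two things: Item 1 needs no $L_\infty$-brackets at all, and the flow-invariance of the $h_i$ is exactly the mechanism behind Remark 4.9 of \cite{LLS}, so the independence of choices becomes transparent. (Two small remarks: the base case $r_1(m)=\dim L$ is automatic, since $\rho_m(E_{-1}|_m)=T_m\mathcal F=T_mL$ for $m\in L$, so you do not even need $h_0$ there; and the identifications $\Gamma(E_{-i})\otimes_{\mathcal C^\infty_M}\mathcal C^\infty_L=\Gamma(\mathfrak i_L E_{-i})$ and $\mathcal F\otimes_{\mathcal C^\infty_M}\mathcal C^\infty_L=\mathcal F/I_L\mathcal F$ require $L$ locally closed, the paper's standing assumption.) For Item 2 the paper simply defers to Proposition 4.18 in \cite{LLS}; your right-exactness argument, producing the canonical isomorphism $\mathcal F/I_L\mathcal F\cong \Gamma(\mathfrak i_L E_{-1})/\mathrm d\,\Gamma(\mathfrak i_L E_{-2})$ and transporting the Androulidakis--Zambon structure across it, is a self-contained substitute that yields (a), (b), (c) simultaneously. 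Your closing remark, that the bracket so obtained is the one genuinely induced by $l_2$, is right but deserves its one-line justification: $l_2$ descends to the quotient because the $L_\infty$ relation gives $l_2(x,l_1 y)=\pm\, l_1 l_2(x,y)\in \mathrm d\,\Gamma(E_{-2})$ for $x\in\Gamma(E_{-1})$, $y\in\Gamma(E_{-2})$, and $\rho\circ l_2 = [\,\cdot\,,\cdot\,]\circ(\rho\times\rho)$ then matches it with the Androulidakis--Zambon bracket on $\mathcal F/I_L\mathcal F$.
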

\begin{proof}
Let $ \tau : TL \to E_{-1}$ be a section of $ \rho : E_{-1} \to TL $. Then the bilinear map
 $$ \nabla_u^{(i)} e := [\tau ( u ) , e ] \hbox{ for all $u \in \mathfrak X (L) $ and $e \in \Gamma(E_{-i}) $ }$$
defines a connection on $ E_{-i}$ for all $i \geq 1$. An easy computation gives:
 $$ \nabla_u^{(i+1)} (\mathrm d e) =  [\tau ( u ) , \mathrm d e ] = \mathrm d [\tau ( u ) , e ] = \mathrm d \nabla^{(i)}_u e .$$
 Since the covariant derivative of $\mathrm  d $ is $0$, its rank has to be constant. This proves the first item. 
For the second item, see Proposition 4.18 in \cite{LLS}.
\end{proof}

\subsubsection{Definition of the homotopy groups of a singular foliation}
\label{sec:homotopyGroups}

Throughout this section, $ {\fol{F}}$ is a singular foliation  that admits a geometric resolution: by Theorem \ref{theo:existe}, it also admits a universal NQ-manifold $\NQ{U}^{\fol{F}} $. Since homotopy equivalent NQ-manifolds have isomorphic homotopy groups and fundamental groupoid by Corollary \ref{cor:homotopyGroups},
and since any two universal NQ-manifolds are homotopy equivalent by Corollary \ref{coro:unique}, 
the groups $\pi_n(\NQ{U}^\fol{F}, m) $, $\Gamma(\pi_n(\NQ{U}^\fol{F}) ) $ and the  fundamental groupoid $\mathbf\Pi (\NQ{U}^\fol{F}) $ do not depend on the choice of a universal NQ-manifold. In particular, this justifies the following definition and the following notations:

\begin{definition}
For every singular foliation $\fol{F} $ that admits a geometric resolution, we define 
 \begin{enumerate}
     \item the $n$-th homotopy group $ \pi_n ({\fol{F}}, m) $ at $m$, 
     \item its space of smooth sections $\Gamma(\pi_n(\fol{F})) $,
     \item  and the fundamental groupoid $\Pi(F)$ of $\fol{F} $,
 \end{enumerate}  
 to be those of any one of its universal NQ-manifolds.
\end{definition}

In \cite{AS09}, Androulidakis and Skandalis constructed the holonomy Lie groupoid of a singular foliation: It is a topological groupoid whose leaves are the leaves of $ {\fol{F}}$, and whose restriction to a given leaf $L$ is a smooth manifold in a neighborhood of the identity. 
According to Proposition 4.37 in \cite{LLS}, the fundamental groupoid $\Pi(\fol{F}) $ of $\fol{F} $ is the universal cover of the holonomy groupoid of Androulidakis and Skandalis\footnote{By universal cover, we mean that for each leaf $m \in M$, $\Pi(\fol{F}) |_m=s^{-1}(m) $ is the universal cover of the fiber over $m$ of  Androulidakis and Skandalis' holonomy groupoid}. In view of this identification, and since the holonomy Lie groupoid is a well-established mathematical object, we prefer to adopt the terminology as follows:

\begin{convention}
\label{con:newName}
From now on, we will call $\mathbf F=\Pi(\fol{F})  $ the \emph{universal holonomy groupoid of $\fol{F} $} rather than ``the fundamental groupoid of $\fol{F} $''.
\end{convention}

\begin{remark}
\normalfont
\label{rmk:holonomyLieAlgerbroid}
The universal holonomy groupoid $\mathbf F$ can be characterized as follows: it induces the same foliation as $ \fol{F}$ and its  restriction to a leaf $L$ is the fundamental groupoid of the holonomy Lie algebroid $A_L$.
\end{remark}

Lemma \ref{lem:restrLeaf} implies that $ \pi_n(\fol{F},m)$ only depends on the restriction of the universal NQ-manifold $ \NQ{U}^\fol{F}$ to the leaf $L$ to which $m $ belongs:

\begin{lemma}
For every universal NQ-manifold $ \NQ{U}^\fol{F}$ of a singular foliation $ \fol{F}$, and every point $m \in M$, we have:
 $$  \pi_n(\fol{F}, m) = \pi_n ( \mathfrak i_L \NQ{U}^\fol{F}, m)$$
 where $L$ is the leaf through $ m$.
\end{lemma}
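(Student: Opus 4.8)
The plan is to observe that the statement is an immediate consequence of the \emph{definition} of $\pi_n(\mathcal F, m)$ together with Lemma \ref{lem:restrLeaf}. First I would unwind the definition: by construction $\pi_n(\mathcal F, m)$ is declared to be $\pi_n(\mathbb U^\mathcal F, m)$, the $n$-th homotopy group based at $m$ of any universal NQ-manifold $\mathbb U^\mathcal F$ of $\mathcal F$. This is well-defined precisely because Corollary \ref{coro:unique} guarantees that any two universal NQ-manifolds are homotopy equivalent and Corollary \ref{cor:homotopyGroups} ensures that homotopy equivalent NQ-manifolds have isomorphic homotopy groups. In particular, the left-hand side of the claimed equality is, verbatim, $\pi_n(\mathbb U^\mathcal F, m)$.

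Next I would apply Lemma \ref{lem:restrLeaf} with input NQ-manifold $\mathbb V := \mathbb U^\mathcal F$. That lemma asserts exactly that the restriction $\mathfrak i_L \mathbb U^\mathcal F$ to the leaf $L$ through $m$ is again an NQ-manifold and that $\pi_n(\mathbb U^\mathcal F, m) = \pi_n(\mathfrak i_L \mathbb U^\mathcal F, m)$. Chaining the two equalities gives $\pi_n(\mathcal F, m) = \pi_n(\mathfrak i_L \mathbb U^\mathcal F, m)$, which is the assertion. No additional choices or constructions are needed, since $\mathfrak i_L \mathbb U^\mathcal F$ is exactly the object occurring in Lemma \ref{lem:restrLeaf}.

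The only genuine content sits inside Lemma \ref{lem:restrLeaf}, which I would invoke as a black box: the base map of any NQ-morphism out of a connected manifold cannot jump between leaves, so every representative of a class in $\pi_n(\mathbb U^\mathcal F, m)$---having its north pole sent to $m$---has base map valued in the single leaf $L$, and the same holds for any homotopy between two such representatives. Consequently the sets of based maps and based homotopies computing $\pi_n(\mathbb U^\mathcal F, m)$ coincide with those computing $\pi_n(\mathfrak i_L \mathbb U^\mathcal F, m)$, and the two groups agree. Since all the substantive work was already carried out there, I expect no further obstacle: the present lemma is purely a matter of combining the definition of $\pi_n(\mathcal F, m)$ with the already-established restriction principle.
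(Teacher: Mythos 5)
Your proposal is correct and matches the paper exactly: the paper states this lemma without a separate proof, presenting it as an immediate consequence of Lemma \ref{lem:restrLeaf} applied to $\mathbb V = \mathbb U^{\mathcal F}$, combined with the definition of $\pi_n(\mathcal F, m)$ as the homotopy groups of any universal NQ-manifold (well-defined by Corollaries \ref{cor:homotopyGroups} and \ref{coro:unique}). Your closing remark that base maps of NQ-morphisms from connected manifolds cannot jump between leaves is precisely the paper's one-line justification of Lemma \ref{lem:restrLeaf} itself.
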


\begin{example}
\normalfont
\label{ex:regular}
	When $ \fol{F}$ is a regular foliation, we have $ \fol{F} = \Gamma(A)$ with $A\subset TM$ an integrable distribution. In this case, $A[1]$ is itself a universal NQ-manifold of $ \fol{F}$. The universal holonomy groupoid is then the universal cover of the holonomy groupoid of $\fol{F} $ as defined in \cite{zbMATH01956612}. Also, at a point $m \in M$, the homotopy groups of $ \fol{F}$ are the homotopy groups of the leaf through $m$.
\end{example}

\begin{example}
\normalfont
\label{ex:regularleaf}
By the previous example, we see that 
 $ \pi_n( \fol{F},m) = \pi_n(L,m) $
 for every regular point $m \in M$, with $L$ being the leaf through $m$.
\end{example}

\begin{example}
\normalfont
\label{ex:Debord1}
We say that a singular foliation is \emph{Debord} or \emph{projective} when it is projective as a module over functions. By the Serre-Swann theorem, it means that $ {\fol{F}}$ comes from a Lie algebroid $A$ whose anchor is injective on a dense open subset. By construction, the Lie algebroid $A[1]$ is a universal NQ-manifold of $ {\fol{F}}$. Moreover, Theorem 1 in \cite{D} says that $A$ integrates into a smooth (source simply-connected) Lie groupoid, which has to coincide with $ \mathbf F$. In view of Example \ref{ex:LieAlgebroid}, we have\footnote{As usual for Lie groupoids, $ \mathbf F |_m = s^{-1}(m) \subset \mathbf F$ is the source fiber of $m$, and  $\widetilde{I_m({\mathbf F})}$ is the universal cover of the isotropy of $I_m(\mathbf F) $ of $ \mathbf F$ at $m$.} $ \pi_n(\fol{F},m) = \pi_n( {\mathbf F}|_m ,m)$ for all $ n \geq 2$ and $ \pi_1(\fol{F},m)$ is the isotropy ${I_m({\mathbf F})}$ of $\mathbf F$ at $m$.
\end{example}

\subsubsection{Computation of the homotopy groups of a singular foliation}
\label{sec:isotropy}

The purpose of this subsection is to describe several exact sequences that lead to the computation of the homotopy groups $ \pi_n({\fol{F}},m)$, when the following two ingredients are given:
\begin{enumerate}
\item the isotropy Lie $\infty$-algebra of $ \fol{F}$ at $m \in M$, 
 \item the universal holonomy groupoid $ \mathbf F$ of $\fol{F} $, see Convention \ref{con:newName}, more precisely its restriction to the leaf $L$ through $m$. \end{enumerate}

Let us briefly recall how the first of these objects is constructed. The \emph{isotropy Lie $\infty$-algebra of $\fol{F} $ at $m$} is introduced in \cite{LLS}, Section 4.2, by the following spaces and brackets:
    \begin{enumerate}
        \item  The cohomology groups $ H^\bullet(\fol{F}, m) := \oplus_{ n \geq 1}  H^{-n}( {\fol{F}},m )$ of any geometric resolution $(E_\bullet, \mathrm d, \rho) $ of $\fol{F}$, evaluated at $ m \in M$:
    \begin{align}
        \label{eq:Hn} 
        &H^{-n}(\fol{F},m)= H^{-n}(E_\bullet|_m,d|_m)=\frac{\mathrm{ker}( d|_m: E_{-n}|_m\to E_{-(n-1)}|_m)}{\mathrm{im}(d|_m: E_{-(n+1)|_m}\to  E_{-n}|_m)} ~~~~ \mathrm{~for~}n\geq 2\\
        &H^{-1}(\fol{F},m)=\nonumber H^{-1}(E_\bullet|_m,d|_m)=\frac{\mathrm{ker}( \rho|_m: E_{-1}|_m\to T_mM)}{\mathrm{im}(d|_m: E_{-2|_m}\to  E_{-1}|_m)} 
    \end{align}
     coincides by construction with $ Tor^{\bullet}({\fol{F}}, {\mathbb R})$, with ${\mathbb R}$ being made a $\Cinfty{M}$-module through evaluation at $m$ (see Remark 4.9 in \cite{LLS}).
     In particular, the graded vector space $H^\bullet(\fol{F}, m)$ does not depend on the chosen resolution and depends on the singular foliation $ {\fol{F}}$ only, which explains the notation. 
        \item The Lie $ \infty$-algebroid brackets of any universal Lie $\infty$-algebroid of $ \fol{F}$ restrict to the point $m$ to yield a Lie $\infty $-algebra, called \emph{isotropy Lie $\infty$-algebra}. Its \emph{minimal model} is by construction a Lie $\infty$-algebra structure on $H^\bullet(\fol{F}, m)$ whose un-ary bracket is zero. In particular, its $2$-ary bracket satisfies the graded Jacobi identity. Any two minimal models are strictly isomorphic through an isomorphism whose linear map is the identity map of  $ H^{\bullet}( {\fol{F}},m )$: in particular, the $2$-ary graded Lie bracket is canonically attached to $ {\fol{F}}$.
    \end{enumerate}

We now give a long exact sequence that computes the homotopy groups of a singular foliation.

\begin{proposition}
\label{prop:computingISotropies}
Let ${\fol{F}}  $ be a singular foliation that admits a geometric resolution and $\mathbf F $ be the universal holonomy Lie groupoid. Then  {for any point $m$ in a leaf $L$,} there is a natural exact sequence of groups as follows:
\begin{align}
\label{eq:computingISotropies}
    \dots \to H^{-n}( {\fol{F}}, m) \longrightarrow   \pi_n( \fol{F},m) \stackrel{q}{ \longrightarrow } \pi_{n}({\mathbf F}|_m,m)  \longrightarrow H^{-n+1}( {\fol{F}},m) \longrightarrow \cdots \\
	  \cdots  \longrightarrow \pi_2( \fol{F}, m)    \stackrel{q}{ \longrightarrow } \pi_2(\mathbf F|_m,m)  \longrightarrow  0  \longrightarrow \pi_1( \fol{F}, m)  \longrightarrow {\mathbf{F}|_m^m}  \longrightarrow 0 ,\nonumber
\end{align}
	where  $ \mathbf F|_m = s^{-1}(m)$ is the $s$-fiber over $m$ of the source map, $\mathbf{F}|_m^m$ is the isotropy at $m$ of the universal holonomy Lie groupoid.
\end{proposition}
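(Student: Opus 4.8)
The plan is to apply Theorem \ref{thm:snake} to a well-chosen NQ-manifold fibration and then identify each term of the resulting long exact sequence with the objects appearing in \eqref{eq:computingISotropies}. First I would restrict everything to the leaf $L$ through $m$: by the preceding lemma $\pi_n(\mathcal F,m)=\pi_n(\mathfrak i_L\mathbb U^\mathcal F,m)$, so it suffices to work with the transitive NQ-manifold $\mathfrak i_L\mathbb U^\mathcal F$ over $L$. The fibration I have in mind is the quotient map $P\colon \mathfrak i_L\mathbb U^\mathcal F\to A_L$ onto the holonomy Lie algebroid $A_L=\mathfrak i_L E_{-1}/\mathrm d\,\mathfrak i_L E_{-2}$, viewed as an NQ-manifold through its Chevalley--Eilenberg differential. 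That $P$ is a genuine NQ-morphism, i.e.~that the bracket on $A_L$ is the one transported from $\mathbb U^\mathcal F$, is exactly Lemma \ref{lem:transversefol}, whose first item moreover guarantees that $\mathrm d\,\mathfrak i_L E_{-2}$ has constant rank, so that $\ker P$ is an honest subbundle and $P$ is a surjective submersion of NQ-manifolds over the transitive Lie algebroid $A_L$. A crucial simplification is that the base map of $P$ is $\mathrm{id}_L$: for any Ehresmann connection $H$ the vector fields $\rho_{\mathbb V}\circ\tilde H(b)$ and $\rho_{A_L}(b)$ on $L$ then coincide, so the completeness condition of Definition \ref{def:Ehresmann} holds automatically and Theorem \ref{thm:snake} applies.

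Next I would compute the homotopy groups of the fiber $\mathbb T_\ell$. Since $p=\mathrm{id}_L$, this fiber sits over the single point $\ell=m$, hence $\mathbb T_\ell$ is a Lie $\infty$-algebra; its linear part is $K_{-1}=\mathrm d\,\mathfrak i_\ell E_{-2}\subset E_{-1}|_\ell$ in degree $-1$ and $E_{-i}|_\ell$ in degrees $-i\le -2$, with vanishing anchor because $\rho\circ\mathrm d=0$. The decisive point is the cohomology of this complex: in degree $-1$ it vanishes, as $\mathrm d\colon E_{-2}|_\ell\to K_{-1}$ is surjective by construction, whereas in degrees $-n\le -2$ it equals $H^{-n}(\mathcal F,m)$, with the same kernels and images as in the isotropy complex \eqref{eq:Hn}. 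By homotopy transfer $\mathbb T_\ell$ is therefore homotopy equivalent to a nilpotent Lie $\infty$-algebra concentrated in degrees $\le -2$ and built on $\bigoplus_{n\ge 2}H^{-n}(\mathcal F,m)$. Example \ref{ex:LieAlgebraNilpotent} (equivalently, the collapse of the sequence of Example \ref{ex:LieInfinityAlgebra}, since $H^{-1}=0$ makes the integrating Lie group trivial) then yields $\pi_n(\mathbb T_\ell,m)\cong H^{-n}(\mathcal F,m)$ for $n\ge 2$ and $\pi_1(\mathbb T_\ell,m)=0$.

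For the base terms I would use that $A_L$ is transitive and that, by Remark \ref{rmk:holonomyLieAlgerbroid}, its fundamental groupoid is the smooth groupoid $\mathbf F|_L$; thus $A_L$ is longitudinally integrable and Proposition \ref{prop:Homot_grs_Of_Lie_oid} (i.e.~Example \ref{ex:LieAlgebroid} applied to $A_L$) identifies $\pi_n(A_L,\ell)\cong\pi_n(\mathbf F|_m,m)$ for $n\ge 2$ and $\pi_1(A_L,\ell)=\mathbf F|_m^m$. Substituting the fiber and base computations into the long exact sequence of Theorem \ref{thm:snake} produces precisely \eqref{eq:computingISotropies}, with $q=P_*$. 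It then remains to treat the bottom corner. The term immediately after $\pi_2(\mathbf F|_m,m)$ is $\pi_1(\mathbb T_\ell,m)=0$, which accounts for the displayed $0$, and exactness gives the injection $0\to\pi_1(\mathcal F,m)\to\mathbf F|_m^m$. To close the sequence I would invoke the final clause of Theorem \ref{thm:snake}: since $\mathbb T_\ell$ lives over a point, its orbit space is a single point, so for every $\gamma\in\pi_1(A_L,\ell)$ the diffeomorphism $\delta(\gamma)$ trivially fixes the $\mathbb T_\ell$-leaf of $m$; by the stated equivalence this forces $\gamma\in\mathrm{im}(P_*)$, so $P_*\colon\pi_1(\mathcal F,m)\to\mathbf F|_m^m$ is onto and hence an isomorphism, which is the content of the last segment.

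The main obstacle is the middle step, namely correctly pinning down $\mathbb T_\ell$ and its homotopy groups. Everything hinges on the identification $\ker P=\mathrm d\,\mathfrak i_L E_{-2}$ together with the constant-rank statement of Lemma \ref{lem:transversefol}, which is exactly what forces $H^{-1}(\mathbb T_\ell)=0$ and thereby shifts the degrees so that the holonomy isotropy $\mathbf F|_m^m$ appears precisely as $\pi_1(\mathcal F,m)$, rather than being entangled with $H^{-1}(\mathcal F,m)$. Naturality of the whole sequence would then follow from the naturality of Theorem \ref{thm:snake} and the canonicity, up to homotopy, of $\mathbb U^\mathcal F$, of $A_L$, and of the minimal model computing the $H^{-n}(\mathcal F,m)$.
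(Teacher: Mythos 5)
Your proposal is correct and follows essentially the same route as the paper: restrict $\mathbb U^{\mathcal F}$ to the leaf, apply Theorem \ref{thm:snake} to the fibration $\mathfrak i_L\mathbb U^{\mathcal F}\to A_L$, identify the homotopy groups of the fiber with $H^{-n}(\mathcal F,m)$ via Example \ref{ex:LieAlgebraNilpotent} and those of the base with the homotopy groups of $\mathbf F|_m$ via Proposition \ref{prop:Homot_grs_Of_Lie_oid}. You in fact supply details the paper leaves implicit (automatic completeness of the Ehresmann connection because the base map is $\mathrm{id}_L$, the vanishing of $H^{-1}$ of the fiber complex, and surjectivity onto $\mathbf{F}|_m^m$ via the final clause of Theorem \ref{thm:snake}); the one fact you assert rather than justify is the smoothness of $\mathbf F|_m$, which the paper obtains from Debord's longitudinal smoothness of the holonomy groupoid (Proposition 2.2 in \cite{MR3119886}) combined with Proposition 4.37 in \cite{LLS}.
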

\begin{proof} Consider the restriction $\mathfrak i_L  \NQ{U}^\fol{F}$ of the universal NQ-manifold $ \NQ{U}^\fol{F} $ to the leaf $L$. 
The exact sequence \eqref{eq:computingISotropies} is a particular case of the exact sequence obtained in Theorem \ref{thm:snake} to the fibration of NQ-manifolds:
 $$ Q \colon \mathfrak i_L \NQ{U}^{\fol{F}} \longrightarrow A_L[1] = \left(\tfrac{{\mathfrak i}_L E_{-1}}{ \mathrm d \mathfrak i_L E_{-2}} \right)[1]  $$
 over the holonomy Lie algebroid of $L$. Its fiber over $ \ell$ is the nilpotent part of isotropy Lie $\infty$-algebra of $ \fol{F}$: for $ n \geq 2 $ its $\pi_n $ coincide with $H^{-n}(\fol{F} , m )$ in view of Example \ref{ex:LieAlgebraNilpotent}, while its $ \pi_1$ is trivial.
 Since the holonomy Lie algebroid is the Lie algebroid of the universal holonomy groupoid (see Remark \ref{rmk:holonomyLieAlgerbroid}), Example \ref{ex:LieAlgebroid} gives an expression of its homotopy groups.

 By Proposition 2.2. in \cite{MR3119886}, the holonomy groupoid of a singular foliation is longitudinally smooth. Since $\mathbf F $ is its universal cover by Proposition 4.37 in \cite{LLS}, $ \mathbf F|_m$ is a smooth manifold for all $m\in M$. Proposition \ref{prop:Homot_grs_Of_Lie_oid} gives therefore for all $ n \geq 2$ and $ m \in L$ an isomorphism $ \pi_{n}({\mathbf F}|_m,m)\simeq \pi_n(A_{L}, m) $ and 
 $ \pi_{1}({\mathbf F}|_m,m)\simeq I_m(\mathbf F) = \mathbf F|_m^m $.
 {One can now apply Theorem \ref{thm:snake}.}
\end{proof}
\begin{remark}\normalfont
\label{rmk:holonomyGpdPi2}
Let $L$ be a leaf of a singular foliation $ \fol{F}$, and $A_L$ be the holonomy Lie algebroid. Applying Theorem \ref{thm:snake} or Theorem 1.4 in \cite{Zhu} to the surjective submersion  of Lie algebroids:
 $$  A_L \to  T L $$
 whose fiber over $m$ is the isotropy Lie algebra $ H^{-1}(\fol{F}, m ) $,
we obtain the following long exact sequence:
\begin{align}
\label{eq:computingISotropies2}
    \dots \to  \pi_n(  \mathbf H_m , 1 ) \longrightarrow \pi_n( \mathbf F |_m, m )   \longrightarrow \pi_n( L , m  )  \longrightarrow \cdots \\
	 \cdots \longrightarrow  0 \longrightarrow \pi_2( \mathbf F|_m, m)   \longrightarrow  \pi_2( L,m)  \longrightarrow  \mathbf H_m \longrightarrow \mathbf F_m  \stackrel{t}{\longrightarrow}  L ,\nonumber
\end{align}
with $ \mathbf F|_m  = s^{-1}(m) \subset  \mathbf F_m$
and $\mathbf H_m$ the simply connected Lie group integrating the isotropy Lie algebra.
The second line of \eqref{eq:computingISotropies2} appears in Proposition 3.9. in \cite{AZ13}. Let us explain briefly the relation.  Equation \eqref{eq:computingISotropies2} yields an exact sequence:
\begin{align}
\label{eq:computingISotropies3}  0  \longrightarrow  \frac{ \pi_2( L,m) }{ \pi_2( \mathbf F|_m, m) }  \longrightarrow  \mathbf H_m \longrightarrow \mathbf F|_m^m  \longrightarrow  0  
\end{align}
which matches (3.6) in \cite{AZ13}: The left-hand term of \eqref{eq:computingISotropies3} is therefore what Androulidakis and Zambon call the essential isotropy group of $L$.
\end{remark}

\subsection{Ehresmann ${\fol{F}}$-connections}

\label{sec:Ehresmann_leaf}

\subsubsection{Definition and existence}

Let $L$ be a locally closed leaf of a singular foliation $ {\fol{F}}$.

\begin{definition}
An Ehresmann ${\fol{F}}$-connection near $L$ is a triple made of:
\begin{enumerate}
    \item[a)] A neighborhood $M_L$ of $L$ in $M$ equipped with a projection $p\colon M_L\to L$,
    \item[b)] An Ehresmann connection $H$ for $ p$ whose sections are elements of $ {\fol{F}}$.
\end{enumerate}
It is said to be a complete Ehresmann  ${\fol{F}}$-connection near $L$ if $H$ is complete.
\end{definition}

Let us discuss the existence of such objects. The first statement of the following Proposition was established and proved while discussing with Iakovos Androulidakis \cite{AndroulidakisPrivate}.

\begin{proposition}
\label{prop:existeEhresmann}
Any locally closed leaf $L$ of singular foliation $ {\fol{F}}$ admits an Ehresmann ${\fol{F}}$-connection in a neighborhood of $L$. It can be chosen to be complete, possibly in a sub-neighborhood $M_L $, if there exists a function $ \kappa \colon M_L \to {\mathbb R}$ vanishing along $L$
such that the restriction of $p \colon \{ \kappa \leq 1 \} \to L $ is a proper map satisfying either of the following assumptions:
\begin{enumerate}
    \item $ \kappa $ is constant along the leaves,
    \item for every $m \in\kappa^{-1}(1) $, there exists a vector field $ u \in {\fol{F}}  $ tangent to the fibers of $p$ such that $ u[  \kappa]|_m \neq 0 $.
\end{enumerate}
\end{proposition}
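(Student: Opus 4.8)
The plan is to prove the two assertions separately: first the unconditional existence of an Ehresmann $\mathcal F$-connection near $L$, then completeness under either hypothesis on $\kappa$.

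\textbf{Existence.} I would fix any tubular neighborhood $M_L$ of $L$ with a submersion $p\colon M_L\to L$ restricting to the identity on $L$, and construct the horizontal lift $\tilde H\colon \Gamma(TL)\to\mathcal F$ locally, then glue. The key observation is that the rank of the map $X(m)\mapsto Tp(X(m))$ on $T_m\mathcal L_m=\{X(m):X\in\mathcal F\}$ (where $\mathcal L_m$ is the leaf through $m$) is lower semicontinuous in $m$ and bounded above by $\dim L$; since it equals $\dim L$ along $L$, it equals $\dim L$ on a shrunken $M_L$. Hence, over $M_L$, the projections $Tp(X)$ of $\mathcal F$-vector fields span $TL$ with constant rank, so a local frame $e_1,\dots,e_d$ of $TL$ admits smooth $\mathcal F$-valued lifts $\tilde H(e_i)$ obtained by solving $Tp(\tilde H(e_i))=e_i\circ p$ pointwise with smooth coefficients. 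These span a complement to $\ker Tp$. Gluing the local lifts through a locally finite cover $\{U_\alpha\}$ of $L$ with subordinate partition of unity $\{\chi_\alpha\}$, via $\tilde H=\sum_\alpha(\chi_\alpha\circ p)\,\tilde H^\alpha$, yields a global Ehresmann $\mathcal F$-connection: it is $\mathcal F$-valued because $\mathcal F$ is a $\mathcal C^\infty_{M_L}$-module, it is $p$-related to the identity because $p$-relatedness is preserved under convex combinations, and its image is complementary to $\ker Tp$ because it projects isomorphically onto $TL$.

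\textbf{Reduction of completeness to confinement.} Completeness means that each horizontal lift $\tilde H(v)$ has a flow defined for exactly those times for which the flow of $v$ on $L$ is defined; the ``only if'' direction is automatic from $p$-relatedness, so only the non-escape of horizontal trajectories must be controlled. Here $\kappa$ enters through properness: if the flow of $v$ exists over a finite time interval, its trajectory stays in a compact $K\subset L$, and if in addition $\kappa$ stays $\le 1$ along the horizontal lift, then the whole horizontal trajectory is confined to $p^{-1}(K)\cap\{\kappa\le 1\}$, which is compact because $p\colon\{\kappa\le 1\}\to L$ is proper. A trajectory confined to a fixed compact set over a finite time extends to the endpoint, which is precisely completeness. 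Thus in both cases it suffices to arrange that horizontal flows never let $\kappa$ exceed $1$, and I take $M_L=\{\kappa\le 1\}$.

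\textbf{The two cases.} Under assumption (1), every $X\in\mathcal F$ is tangent to the leaves and $\kappa$ is leafwise constant, so $X[\kappa]=0$; in particular every horizontal lift preserves $\kappa$. A horizontal trajectory starting in $\{\kappa\le 1\}$ therefore remains on its initial level set, and the confinement argument applies verbatim, with no modification of the connection. Under assumption (2), I would instead correct the connection near the boundary: by continuity the fibre-tangent fields $u\in\mathcal F$ with $u[\kappa]\neq 0$ exist on a whole collar $\{1-\varepsilon<\kappa\le 1\}$, so on this collar I solve, linearly in $v$ and locally, the equation $\bigl(\tilde H(v)+W(v)\bigr)[\kappa]=0$ by $W(v)=-\tfrac{\tilde H(v)[\kappa]}{u[\kappa]}\,u$, patch the vertical $\mathcal F$-valued correction $W$ by a partition of unity over the fibrewise-compact collar, and damp it by a cutoff in $\kappa$ so that $W$ vanishes below the collar. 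Adding a vertical $\mathcal F$-field preserves $p$-relatedness and transversality to the fibres, so $\tilde H':=\tilde H+W$ is again an Ehresmann $\mathcal F$-connection, and by construction its lifts are tangent to the level sets of $\kappa$ throughout the collar; hence a horizontal trajectory cannot push $\kappa$ past these levels and stays in $\{\kappa\le 1\}$, after which confinement finishes the proof. The main obstacle is exactly this last construction: one must check that the local corrections assemble into a smooth vertical bundle map whose modified distribution remains transverse to the fibres, and that tangency to the level sets on the collar genuinely forbids escape through $\{\kappa=1\}$ — it is here that properness of $p|_{\{\kappa\le 1\}}$ is indispensable.
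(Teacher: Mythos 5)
Your proposal is correct and takes essentially the same route as the paper's proof: a tubular neighborhood and partition-of-unity gluing of local $\mathcal F$-valued lifts (using that $T_m\mathcal F+\mathrm{Ker}(T_mp)=T_mM$ on a shrunken neighborhood) for existence, invariance of $\{\kappa\le 1\}$ plus properness of $p|_{\{\kappa\le 1\}}$ for case (1), and a vertical $\mathcal F$-valued correction killing $\tilde H(v)[\kappa]$ at the boundary for case (2) --- where the paper simply takes $H_\kappa(X)=H(X)-H(X)[\kappa]\,u$ with $u$ vertical and $u[\kappa]=1$ on $\{\kappa=1\}$, making lifts tangent to the hypersurface $\{\kappa=1\}$ and thereby avoiding your collar-width and cutoff bookkeeping over a possibly noncompact $L$. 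One minor repair to your existence step: the local lifts are produced on open subsets of $M_L$ (where finitely many $X_i\in\mathcal F$ have projections framing $TL$), not on full preimages $p^{-1}(U_\alpha)$, so the gluing should use a partition of unity on $M_L$ itself, as the paper does, rather than one pulled back from $L$.
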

\begin{proof}
According to the tubular neighborhood theorem, there exists a neighborhood $U $ of the leaf $L$ in $M$ that comes equipped with a projection $p \colon U \to L$ which is a surjective submersion.
Now, there is a neighborhood $M_L$ of $L$ in $ U$ such that for all $m \in M_L$, $ T_m {\fol{F}} + {\mathrm{Ker}} (T_m p ) =T_m M $. 

{
For every $l\in L$ there exists a neighborhood $V_l$ and a local trivialization $u_1,...,u_n$ of $TL$. By definition of $M_L$, there exist $X_1,...,X_n\in \fol{F}$ $p$-related to $u_1,...,u_n$,
} defining therefore an Ehresmann $ {\fol{F}}$-connection $ H_m$ on $p^{-1}(V_l)$. 

Let $ (\phi_i,V_i)_{i \in I} $ be a partition of unity of $ M_L$, such that an Ehresmann $ {\fol{F}}$-connection $H_i$ of the previous type exists on $V_i$.
A global Ehresmann $ {\fol{F}}$-connection on $M_L$ is given as follows: it is the unique distribution for which the horizontal lift of $X \in {\mathfrak X}(L)$ is $ H(X) := \sum_{i \in I} \phi_i H_i ({X})$ with  $H_i({X})$ the horizontal lift of $X|_{p(V_i)} $ with respect to $H_i$ on $V_i$.  

If a function $\kappa  $ satisfying the first condition exists, then the horizontal lift $ \tilde{X}$ of every vector field $X \in {\mathfrak X}(L) $ satisfies $ \tilde{X}[\kappa]=0$, so that its flow preserves $ \{\kappa \leq 1\}$. Since $p: \{\kappa \leq 1\} \to L$ is a proper map, this proves that the flow of $X$ is defined if and only if the flow of $ \tilde{X}$ is defined.
 
Assume now that a function $\kappa  $ satisfying the second condition exists. This implies that $ \{\kappa =1\}$ is a submanifold of $ M_L$. Using partitions of unity, we can construct a vertical vector field $u \in {\fol{F}}$ such that $ u[\kappa]=1$ at any point of $  \{\kappa =1\}$. Now, let $H $ be an Ehresmann $ {\fol{F}}$-connection on $p \colon M_L \to L$. By assigning to a vector field $X \in \mathfrak X (L) $ the vector field $H(X) - H(X)[\kappa] \,   u $, we define a second Ehresmann ${\fol{F}} $-connection $ H_\kappa$ whose sections are tangent to the submanifold $\{ \kappa = 1 \} $. {The flows of these sections, preserve} the subset  $\{ \kappa \leq 1 \}  \subset M_L$. Since $p$ is a proper map on that subset, the flow of $X$ is defined if and only if the flow of  $ H_\kappa (X) $ is defined, implying the claim. 
 \end{proof}

\begin{example}
\normalfont
\label{ex:Tsphere}
Let $M=TS^n$ be the tangent space of the sphere with its standard metric, and let $ \kappa : TS^n \to {\mathbb R}$ be the square of the norm.
Let $ {\fol{F}}$ be the singular foliation on $M=TS^n$ of all vector fields $X$ that satisfy $ X[\kappa]=0$. The leaves of this singular foliation are the sets $ \kappa^{-1}(\lambda)$ for $ \lambda \in {\mathbb R}_+$. Moreover,  $\kappa^{-1}(0)$, \emph{i.e}.~the zero section, is the unique singular leaf. The function $ \kappa$ obviously satisfies the first condition in Proposition \ref{prop:existeEhresmann}, so that complete Ehresmann ${\fol{F}}$-connections exist. Indeed, the horizontal distribution associated to the Levi-Civita connection is a complete Ehresmann $ {\fol{F}}$-connection.
\end{example}

\begin{example}
\normalfont
Let $\fol{F}$ be the singular foliation on $M=TS^n$ of all vector fields tangent to the zero section $S^n\subset TS^n$. The zero section is a leaf, and the function $\kappa$ defined in Example \ref{ex:Tsphere} now satisfies the second condition of Proposition \ref{prop:existeEhresmann}. Hence a complete Ehresmann $ {\fol{F}}$-connection exists. Indeed, the horizontal distribution associated to any linear connection is a complete Ehresmann $ {\fol{F}}$-connection.
\end{example}

Consider a complete Ehresmann ${\fol{F}}$-connection $(M_L,p,H)$ near a locally closed leaf $L$. The fiber of $p $ over $\ell $ is an $ {\fol{F}}$-transversal to $ {\fol{F}}$ at $\ell$. The restricted singular foliation $ {\fol{F}}|_{p^{-1} \ell}$ shall be denoted $ {\fol{T}}_\ell$.  \\

 []{The following proposition extends the classical normal form theorem of singular foliations in the presence of a complete Ehresmann connection (\cite{Dazord, Cerveau, AS09}).}

\begin{proposition}\label{prop:loctriv}
Consider a complete Ehresmann ${\fol{F}}$-connection $(M_L,p,H)$ near a locally closed leaf $L$. Every $ \ell \in L$ admits a neighborhood $U \subset L$ and a local diffeomorphism $ p^{-1}(U) \simeq U \times p^{-1}(L) $ intertwining $ {\fol{F}}$ with the direct product foliation $ {\mathfrak X}(U) \times  {\fol{T}}_\ell$.
\end{proposition}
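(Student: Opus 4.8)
The plan is to reproduce the classical proof that a surjective submersion equipped with a complete Ehresmann connection is locally trivial, while checking that the transverse foliation is carried along as a product factor. Fix $d=\dim L$ and a chart $U\subseteq L$ around $\ell$, identified with a cube around the origin, with coordinate vector fields $\partial_1,\dots,\partial_d$; write $T_\ell:=p^{-1}(\ell)$ for the fiber, endowed with its transverse foliation $\mathcal T_\ell=\mathcal F|_{T_\ell}$ (a singular foliation by Lemma \ref{lem:transversefol0}). Let $\tilde\partial_i:=\tilde H(\partial_i)\in\mathcal F$ be the horizontal lifts and $\Phi^{t}_i$ their time-$t$ flows. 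Because $\tilde\partial_i$ is $p$-related to $\partial_i$, the completeness assumption guarantees that $\Phi^{x_i}_i$ is defined on the whole (a priori non-compact) fibers for all $x_i$ in the coordinate range; this is the only place completeness enters. I then set
\[
  \Psi\colon U\times T_\ell\longrightarrow p^{-1}(U),\qquad
  \Psi(x_1,\dots,x_d,q):=\bigl(\Phi^{x_1}_1\circ\cdots\circ\Phi^{x_d}_d\bigr)(q),
\]
which is smooth by smooth dependence of flows on time and initial data.

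First I would verify that $\Psi$ is a fiber-preserving diffeomorphism onto $p^{-1}(U)$. Since each $\Phi^{t}_i$ is $p$-related to the translation flow of $\partial_i$ on $U$, one gets $p\circ\Psi=\mathrm{pr}_U$, and $\Psi(0,\cdot)=\mathrm{id}_{T_\ell}$ because all flows are taken at time $0$. For fixed $x$ the partial map $\Psi(x,\cdot)$ is a composition of fiber diffeomorphisms, with inverse obtained by running the flows backwards; hence $\Psi$ is a smooth bijection covering $\mathrm{id}_U$ and a diffeomorphism on each fiber, so it is a diffeomorphism.

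The core is to show $\Psi_*\bigl(\mathfrak X(U)\times\mathcal T_\ell\bigr)=\mathcal F|_{p^{-1}(U)}$, which rests on two facts. \textbf{(i)} Differentiating the iterated flow in the base directions gives $\Psi_*\partial_{x_k}=\bigl(\Phi^{x_1}_1\circ\cdots\circ\Phi^{x_{k-1}}_{k-1}\bigr)_*\tilde\partial_k$; since the $\tilde\partial_k$ lie in $\mathcal F$ and the (local) flow of any element of $\mathcal F$ preserves $\mathcal F$ \cite{AS09}, each $\Psi_*\partial_{x_k}$ is again in $\mathcal F$. \textbf{(ii)} Each $\Phi^{x_i}_i$ preserves $\mathcal F$ and maps fibers to fibers, so $\Psi(x,\cdot)$ is a foliated diffeomorphism $(T_\ell,\mathcal T_\ell)\to(p^{-1}(x),\mathcal T_x)$; consequently $\Psi$ sends the vertical $\mathcal T_\ell$-directions into the vertical part $\mathcal F^v:=\mathcal F\cap\Gamma(\ker Tp)$ of $\mathcal F$. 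Facts (i) and (ii) show that $\Psi$ maps the generators of $\mathfrak X(U)\times\mathcal T_\ell$ into $\mathcal F$, giving one inclusion. For the reverse inclusion I would use the horizontal–vertical splitting $TM_L=H\oplus\ker Tp$: writing any $Y\in\mathcal F$ as $\sum_k g_k\tilde\partial_k+Y^v$ shows $Y^v\in\mathcal F^v$, hence $\mathcal F|_{p^{-1}(U)}=\Gamma(H)+\mathcal F^v$; since $\Psi_*\partial_{x_k}-\tilde\partial_k$ is vertical and in $\mathcal F$, each $\tilde\partial_k$ already lies in $\Psi_*(\mathfrak X(U)\times\mathcal T_\ell)$, and together with $\mathcal F^v=\Psi_*(\text{vertical }\mathcal T_\ell\text{-directions})$ this yields $\mathcal F|_{p^{-1}(U)}\subseteq\Psi_*(\mathfrak X(U)\times\mathcal T_\ell)$.

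I expect the main obstacle to be the passage from the pointwise/fiberwise statements to an equality of \emph{modules}: Fact (ii) only gives, fiber by fiber, that $\Psi(x,\cdot)$ carries $\mathcal T_\ell$ onto $\mathcal T_x$, whereas the product foliation $\mathfrak X(U)\times\mathcal T_\ell$ is generated, as a $C^\infty(U\times T_\ell)$-module, by finitely many generators of $\mathcal T_\ell$ taken constant in $x$. Identifying $\mathcal F^v$ with $\Psi_*\bigl(C^\infty(U\times T_\ell)\cdot\mathcal T_\ell\bigr)$ therefore requires knowing that a smooth $U$-family of vector fields, each lying in $\mathcal T_\ell$, can be expanded in fixed generators of $\mathcal T_\ell$ with coefficients depending smoothly on $(x,q)$. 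This is where finite generation of the transverse foliation and smooth dependence on the parameter $x$ (after possibly shrinking $U$) must be invoked carefully; the bijectivity and smoothness of $\Psi$, by contrast, are routine.
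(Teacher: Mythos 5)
Your overall strategy differs from the paper's: you trivialize $p^{-1}(U)$ directly by composing the flows of the lifted coordinate fields in a fixed order, whereas the paper argues by induction on $\dim L$, using the flow of the lift of the last coordinate field to transport a flat complete Ehresmann $\mathcal F$-connection slice by slice (their $H''=\langle H(\tfrac{\partial}{\partial t})\rangle\oplus\tilde H'$ in \eqref{eq:final_dist}), so that everything reduces to one-dimensional transport; their route additionally produces a flat complete $\mathcal F$-connection, while yours is more direct. Most of your steps are sound: $p\circ\Psi=\mathrm{pr}_U$, bijectivity of $\Psi$, the use of completeness (modulo a cutoff, since Definition \ref{def:Ehresmann} quantifies over global sections of $TL$, so you should extend $\partial_i$ by a bump function), the decomposition $\mathcal F|_{p^{-1}(U)}=C^\infty\text{-span}\{\tilde\partial_k\}+\mathcal F^v$ via $H\oplus\ker Tp$, and facts (i)--(ii), noting that the flow times $x_j$ are constant along each fiber so each restriction is a pushforward by a fixed $\mathcal F$-preserving diffeomorphism.

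However, the obstacle you flag at the end is a genuine gap, and the principle you hope to invoke is actually \emph{false} in general: joint smoothness plus fiberwise membership in a finitely generated module does \emph{not} yield jointly smooth coefficients. For instance, on $\mathbb R$ let $\mathcal T$ be the singular foliation generated by $f(y)\partial_y$ with $f(y)=e^{-1/y^2}$ (it is finitely generated and involutive, since $[af\partial_y,bf\partial_y]=f^2(ab'-a'b)\partial_y\in f\,C^\infty\cdot\partial_y$). The family $h(t,y)\partial_y$ with $h(t,y)=e^{-(1+t^2)/y^2}$ is jointly smooth and lies in $\mathcal T$ for every fixed $t$, but the coefficient, uniquely determined for $y\neq 0$ as $e^{-t^2/y^2}$, has no continuous extension at $(0,0)$ (its limit is $1$ along $t=0$ and $e^{-1}$ along $t=y$); hence $h\partial_y$ does not belong to $C^\infty(\mathbb R_t\times\mathbb R_y)\cdot\mathcal T$ with $t$-constant generators. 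So the fiberwise statement of your fact (ii) cannot close the argument. What does close it --- and this is exactly what the paper's one-coordinate-at-a-time induction and its equivariance statement \eqref{eq:pro:Phi} encode --- is to produce the coefficients by solving a linear ODE in the module: pick local generators $Z_1,\dots,Z_r$ of $\mathcal F$, use local finite generation and involutivity to write $[\tilde\partial_k,Z_i]=\sum_j c^k_{ij}Z_j$, and deduce $(\Phi^t_k)_*Z_i=\sum_j a_{ij}(t)Z_j$ where $(a_{ij})$ solves a smooth linear ODE and is therefore \emph{jointly} smooth in $(t,\cdot)$; iterating over $k$ shows that $\mathcal F|_{p^{-1}(U)}$ is generated by the $\tilde\partial_k$ together with the $\Psi$-transports of local generators of $\mathcal T_\ell$, which is the module equality you need. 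This is the mechanism behind the local splitting theorem of Androulidakis--Skandalis (Proposition 1.12 in \cite{AS09}), and with it supplied your proof is complete; without it, the decisive equality $\mathcal F^v=\Psi_*\bigl(C^\infty(U\times p^{-1}(\ell))\cdot\mathcal T_\ell\bigr)$ is established only pointwise.
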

\begin{proof}
It  suffices to prove the following result: ``For $L \simeq J^n$  with $ J = ]-1,1[$, if a complete Ehresmann ${\fol{F}}$-connection $(M_L,p,H)$ over $L$ exists, then  a flat\footnote{A flat Ehresmann connection is a horizontal distribution whose sections are closed under the Lie bracket of vector fields.} complete Ehresmann ${\fol{F}}$-connection exists''.

We prove this statement by induction on $n$. For $n=1$, since every  Ehresmann connection is flat, the result is straightforward. 

Assume that it holds true for some $n \in {\mathbb N}$.
Consider a complete Ehresmann ${\fol{F}}$-connection $(M_L,p,H)$ over $L= J^{n+1}  $.
Let $t \in J$ stand for the variable in the last copy of $J$. The horizontal lift (with respect to $H$) of the vector field $\tfrac{\partial}{\partial t}$ being complete by assumption, its flow defines a family of diffeomorphisms $ \Psi_t \colon p^{-1}(J^n \times \{ s \} ) \to p^{-1}(J^n \times \{s+t\})$ for all $t,s \in  J$ such that $ s+t \in J$. Being the flow of an element in $ {\fol{F}}$, $\Psi_t $ preserves $ {\fol{F}}$, so that:
 \begin{equation}
     \label{eq:pro:Phi}
    (\Psi_t )_* \,  \colon \,  {\fol{F}}|_{p^{-1}(J^n\times  \{0\})} \simeq  {\fol{F}}|_{p^{-1}(J^{n}\times  \{t\})}.
 \end{equation}
By recursion hypothesis, there exists a flat complete Ehresmann ${\fol{F}}$-connection $(M_L,p,H')$ on the restriction 
$$ p^{-1}( J^n \times \{0\}) \to   J^n \times \{0\}   .$$ 
We use $ \Psi_t$ to transport this flat complete Ehresmann connection $\tilde{H}' $ over
 $$ p^{-1}( J^n \times \{t\}) \to   J^n \times \{t\} . $$
According to Equation \eqref{eq:pro:Phi}, $\tilde{H}' $ is a flat Ehresmann  ${\fol{F}}$-connection for all $t \in J$. Consider the distribution
 \begin{equation}\label{eq:final_dist} H'':= \langle H(\tfrac{\partial}{\partial t})\rangle \oplus  \tilde{H}' .\end{equation}
The distribution $ H''$
 \begin{enumerate}
     \item  is of rank  $ n+1$ and is horizontal with respect to $ p$,
     \item  is flat because $\tilde{H}' $ is flat over $ p^{-1}(J^n \times \{t\})$ for all $t \in J$ and is by construction invariant under the flow of $ H(\tfrac{\partial}{\partial t})$,
     \item has sections in $ {\fol{F}}$ because 
     $H(\tfrac{\partial}{\partial t}) \in {\fol{F}}$, and because sections of $\tilde{H}' $ are in ${\fol{F}}$,
     \item  is complete because $H(\tfrac{\partial}{\partial t})$ is a complete vector field and  $ \tilde{H}'$   is a complete flat Ehresmann connection on $ p^{-1}(J^n \times \{t\})$ for all $t \in J$. Since complete vector fields for a flat connection are a $ \Cinfty{L}$-module, this is enough to guarantee the completeness of $H'' $.
 \end{enumerate}
 This completes the proof.
\end{proof}

\begin{remark}
\label{rmk:Pedro}
\normalfont
When $\mathcal  F$ is a Debord foliation, it comes from a Lie algebroid with an anchor map injective on an open subset, then Theorem C in \cite{MR3897481} gives an alternative proof of Proposition \ref{prop:loctriv}. 
\end{remark}

\subsubsection{Ehresmann $ {\fol{F}}$-connections and NQ-manifolds}

Most singular foliations that appear in a given concrete problem are locally real analytic. Since locally real analytic singular foliations do admit a geometric resolution over a locally compact open subset by Theorem 2.4 in \cite{LLS}, the following theorem justifies the assumption (that we always make) about the existence of a universal NQ-manifold for $ \fol{F}$ on  $M_L$.

\begin{theorem}\label{thm:onetransverseisenough}
Consider a complete Ehresmann ${\fol{F}}$-connection $(M_L,p,H)$ near a compact leaf $L$.  The following statements are equivalent:
\begin{enumerate}
    \item[(i)] There exists an $ \ell \in L$ such that   $ {\fol{T}}_\ell$ admits a geometric resolution.
    \item[(ii)] The restriction of ${\fol{F}}$ to $M_L$ admits a universal NQ-manifold.
\end{enumerate}
In particular, if $ {\fol{T}}_\ell$ is a locally real analytic singular foliation, and $ p^{-1}(\ell)$ is relatively compact in $M$, then (ii) holds.
\end{theorem}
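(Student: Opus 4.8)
The plan is to begin by reformulating statement (ii). By Theorem \ref{theo:existe}, together with the very definition of a universal NQ-manifold (whose linear part is by construction a geometric resolution), statement (ii) is equivalent to the assertion that $\mathcal F|_{M_L}$ admits a geometric resolution in the sense of Definition \ref{def:resolution}. With this reformulation the implication (ii) $\Rightarrow$ (i) is immediate: the fibre $p^{-1}(\ell)$ is an $\mathcal F$-transversal, so by Lemma \ref{lem:transversefol0} the restriction to $p^{-1}(\ell)$ of a universal NQ-manifold of $\mathcal F|_{M_L}$ is a universal NQ-manifold of $\mathcal T_\ell=\mathcal F|_{p^{-1}(\ell)}$, and in particular its linear part is a geometric resolution of $\mathcal T_\ell$.

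For the converse (i) $\Rightarrow$ (ii) I would first upgrade the hypothesis from one fibre to all of them. Since $L$ is compact the connection is automatically complete in the leaf directions, so the flows of horizontal lifts furnish, along any path in the connected leaf $L$, diffeomorphisms between fibres intertwining the transverse foliations; transporting a geometric resolution along such a foliation-preserving diffeomorphism again yields a geometric resolution, whence every $\mathcal T_\ell$ admits one, all of a common finite length $r$. Next I invoke Proposition \ref{prop:loctriv}: every $\ell\in L$ has a neighbourhood $U$ and a foliated identification $p^{-1}(U)\simeq U\times p^{-1}(\ell)$ carrying $\mathcal F$ to the product $\mathfrak X(U)\times\mathcal T_\ell$. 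Adjoining the free summand $\mathrm{pr}_U^*TU$ in degree $-1$ (which resolves the $\mathfrak X(U)$-part with no higher syzygies) to the pullback of a geometric resolution of $\mathcal T_\ell$ produces a geometric resolution of this product, of length bounded by $r$. Thus $\mathcal F$ \emph{locally} admits a geometric resolution, with a bound on its local projective dimension that is uniform over $M_L$.

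It remains to globalise these chart-wise resolutions, which is the main difficulty. I would take $M_L$ to be a tubular neighbourhood of $L$, so that $M_L$ deformation retracts onto the compact leaf $L$ and admits a finite good cover; combined with the local finite generation of singular foliations, a partition-of-unity argument then yields finitely many global generators of $\mathcal F|_{M_L}$, i.e.\ a vector bundle $E_{-1}$ and an anchor $\rho\colon E_{-1}\to TM$ whose image module is $\mathcal F$. I would then resolve inductively: having built $\Gamma(E_{-1})\leftarrow\cdots\leftarrow\Gamma(E_{-k})$ with kernel at each stage equal to the image of the next arrow, the syzygy $N_k:=\ker(\Gamma(E_{-k})\to\Gamma(E_{-(k-1)}))$ is again finitely generated (local finite generation plus the finite good cover), so I may pick a vector bundle $E_{-(k+1)}$ surjecting onto $N_k$, realising the differential as a $\mathcal C^\infty$-linear, hence vector-bundle, morphism. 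The termination is the subtle point: because every point of $M_L$ carries the same local model $\mathfrak X(U)\times\mathcal T_\ell$, the local projective dimension of $\mathcal F$ is bounded by the constant $r$, so the $r$-th syzygy $N_r$ is \emph{locally projective}, hence locally free of a rank that is locally---and therefore, by connectedness of $M_L$, globally---constant; by Serre--Swan it is the section module of a vector bundle $E_{-(r+1)}$ with injective differential, closing off a finite geometric resolution of $\mathcal F|_{M_L}$. Theorem \ref{theo:existe} then converts this resolution into the desired universal NQ-manifold, establishing (ii).

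Finally, the ``in particular'' clause follows at once from (i) $\Rightarrow$ (ii): if $\mathcal T_\ell$ is locally real analytic and $p^{-1}(\ell)$ is relatively compact, then Theorem \ref{theo:exists} provides a geometric resolution of $\mathcal T_\ell$ on $p^{-1}(\ell)$, so (i) holds. I expect the genuinely delicate step to be the termination of the global syzygy construction---namely that the iterated syzygy eventually becomes locally free of \emph{constant} rank, so that Serre--Swan returns an honest vector bundle rather than merely a locally projective sheaf---together with the bookkeeping guaranteeing finite generation of each intermediate syzygy; both rest precisely on the uniform local model supplied by Proposition \ref{prop:loctriv} and on the completeness-driven isomorphism of all the transverse foliations $\mathcal T_\ell$.
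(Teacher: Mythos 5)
Your overall route coincides with the paper's: (ii) $\Rightarrow$ (i) by restricting a universal NQ-manifold to the transversal $p^{-1}(\ell)$ via Lemma \ref{lem:transversefol0}, and (i) $\Rightarrow$ (ii) by using Proposition \ref{prop:loctriv} to transport the transverse model along the compact leaf, producing finitely many opens $p^{-1}(U_{\ell'})$ on which $\mathcal F$ admits a geometric resolution (and the ``in particular'' clause via Theorem \ref{theo:exists}, as in the paper). The divergence is in how the local resolutions are globalized: the paper simply invokes Section 3.2.2 of \cite{LLS} at this point, whereas you attempt to prove the gluing from scratch, building a global resolution by iterated syzygies and terminating it with a Serre--Swan argument. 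That is a legitimate, more self-contained route --- essentially a re-proof of the cited result --- and your identification of the delicate point (termination, i.e.\ eventual projectivity of the iterated syzygy) is accurate.

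However, as written the syzygy induction has a genuine gap: you justify finite generation of each intermediate syzygy $N_k=\ker(\Gamma(E_{-k})\to\Gamma(E_{-(k-1)}))$ by ``local finite generation plus the finite good cover''. This is insufficient, and indeed it is exactly the reasoning that fails in the smooth category: kernels of morphisms of finitely generated $\mathcal C^\infty$-modules need not be finitely generated, and if local finite generation of $\mathcal F$ together with a finite cover sufficed, every smooth singular foliation would admit geometric resolutions near a compact set --- contradicting the role of the real-analyticity hypothesis in Theorem \ref{theo:exists}. What actually makes your induction run is the uniform local model: on each product chart $\mathcal F$ has a finitely generated projective resolution of length at most $r+1$, so a (generalized) Schanuel / dimension-shifting argument applied chart by chart shows that the restriction of your global partial resolution has finitely generated syzygies at every stage, and a finitely generated \emph{projective} one past stage $r+1$. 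You gesture at this in your closing paragraph, but the argument must be run explicitly, and it additionally requires tracking exactness of the partial resolution at the level of sheaves (over every open, so that Schanuel can be applied over $C^\infty(U)$), not merely on global sections; likewise the exactness of the pulled-back resolution of the product foliation ${\mathfrak X}(U)\times\mathcal T_\ell$ needs a (standard, parametric) justification rather than an assertion. Finally, a minor misstatement: compactness of $L$ does not make the connection ``automatically complete in the leaf directions'' --- completeness concerns the flows of horizontal lifts in $M_L$, not of vector fields on $L$ --- but this is harmless here since completeness is part of the hypothesis.
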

Of course, $ {\fol{T}}_\ell$ is a locally real analytic singular foliation
if  ${\fol{F}}$ is locally real analytic singular foliation and $p$ is, around each point, real analytic in some local coordinates (that do not need to patch in a real analytic manner).
\begin{proof}
It is obvious that (ii) implies (i). Conversely, Proposition \ref{prop:loctriv} implies that for any $\ell, \ell'\in L$, $\fol{T}_\ell$ and $\mathcal{ T}_{\ell'}$ are isomorphic and that any $\ell'\in L$ has a neighborhood $U_{\ell'}$  such that $p^{-1}(U_{\ell'})\subset M_L$ admits a geometric resolution. By compactness of $L$, $M_L$ can be covered by finitely many such $p^{-1}(U_{\ell'})$. The statement now follows from Theorem 2.4 in \cite{LLS}.
\end{proof}

\begin{proposition}
    \label{prop:compatEhresmann}
Consider an Ehresmann ${\fol{F}}$-connection $(M_L,p,H)$ near a locally closed leaf $L$. {Assume that $\NQ{U}$ is an NQ-manifold whose basic foliation is $\fol{F}$}
\begin{enumerate}
    \item The composition of the projections $\NQ{U} \to T[1]M_L$ and $T[1]p:T[1]M_L\to T[1]L$ is an NQ-manifold fibration $ P \colon \NQ{U} \to TL$ over the tangent Lie algebroid $TL$.
    \item There exists an Ehresmann connection $(E_\bullet, H_{\NQ{U}})$
    for $P \colon {\NQ{U}} \to T[1]L$ such that $ \rho_{\NQ{U}} ( H_{\NQ{U}} )  = H $.
    \item Moreover, any such Ehresmann connection $ H_{\NQ{U}}$ is complete for $P $ if and only if $(M_L,p,H)$ is a complete Ehresmann $ {\fol{F}}$-connection.
 \end{enumerate}
\end{proposition}
\begin{proof}
Like any NQ-manifold over $M_L$, $\NQ{U}$ has a natural NQ-manifold morphism to $T[1]M_L$. When picking a Lie $\infty$-algebroid $(E_\bullet, \l_k, \rho_{\NQ{U}}$ associated to $\NQ{U}$, this morphism  corresponds to the anchor map $\rho_{\NQ{U}}:{E_{1}} \to TM_L$. The projection $T[1]p:  \to T[1]L$ is a Lie algebroid morphism. Thus their composition is a Lie $\infty$-algebroid morphism. 
Since  $ p \colon M_L \to L$ is a surjective submersion and since $Tp \colon H \to TL $ is bijective and $ H_m \subset \rho_{{\NQ{U}}}(E_{-1}|_m) $, this Lie $\infty $-algebroid morphism is a surjective submersion. This proves the first item.

To prove the second item, it suffices to construct a vector bundle morphism $ \sigma: H \to E_{-1}  $ such that $ \rho \circ \sigma = {\mathrm {id}}_{H} $: Its image $ H_{\NQ{U}} = \sigma(H) \subset E_{-1}$ satisfies then the required conditions. 
Let us first construct such a section locally. Let $d$ be the dimension of $L$. Since $H$ is an Ehresmann $ {\fol{F}}$-connection, there exists for every $m \in M_L $ vector fields $X_1, \dots,X_d \in {\fol{F}}$  that generate $H$ on every point in a neighborhood $ U_m \subset M_L$. Let $ e_1, \dots, e_d \in \Gamma_{U_m}(E_{-1})$ be sections such that $ \rho(e_i) = X_i$ for all $i=1, \dots, d$. The vector bundle morphism:
 $\sigma_{U} \colon H \to E_{-1}$ mapping $X_i|_{m'} $ to $ e_i|_{m'}$ for all $m' \in U$ satisfies $\rho \circ \sigma_U ={\mathrm{id}}_H $ on $U$ by construction.
 Let $ (U_i, \phi_i)_{i \in I}$ be a partition of unity of $M$ such that each open subset $ U_i$ comes equipped with a section $\sigma_{i} \colon H \to E_{-1}$ as above. The vector bundle morphism:
  $$ \sigma := \sum_{i \in I} \phi_i \sigma_i $$ 
  satisfies $ \rho_{\NQ{U}} \circ \sigma = \sum_{i \in I} \phi_i \, \rho_{\NQ{U}} \circ \sigma_i =  \sum_{i \in I} \phi_i \, {\mathrm{id}}_H = {\mathrm{id}}_H$. This proves the second item.

The third item is an immediate consequence of Proposition \ref{prop:aboutComplete}
\end{proof}

\subsection{The higher holonomies of a singular leaf} 
\label{sec:LESsing}

Throughout this section, $L$ is a locally closed leaf in is a singular foliation $ {\fol{F}}$ on a manifold~$M$.

\subsubsection{Main theorem and definition}

The main result of this section is the following theorem.

\begin{theorem}
\label{theo:holonomies}
 Let $ {\fol{F}}$ be a singular foliation that admits a geometric resolution. Let $L$ be a locally closed leaf that admits a complete Ehresmann ${\fol{F}}$-connection $(M_L,p,H)$.
For every $\ell \in L$,
    there exist group morphisms 
    \begin{equation}
\label{eq:def_Holonomy}
    Hol : \pi_n (L ,\ell )  \to \Gamma \left( \pi_{n-1}(  {\fol{T}}_{\ell}) \right) \end{equation}
    such that for all $m\in p^{-1}(\ell)$ the following sequence is exact
    \begin{align}\label{seq:leaf}
      \dots \stackrel{Hol|_{m}}{\longrightarrow}   \pi_n( {{\fol{T}}}_\ell,m )  \stackrel{i}{\to} \pi_n({\fol{F}}|_{M_L}  , m)  \stackrel{P}{\to} \pi_n(L,\ell)  \stackrel{Hol|_{m}}{\longrightarrow}  \pi_{n-1}( {{\fol{T}}}_\ell,m ) \to \dots 
    \end{align}
Here $ {\fol{T}}_{\ell}$ is the transverse singular foliation on $p^{-1}(\ell)$.
\end{theorem}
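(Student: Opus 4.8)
The plan is to deduce everything from the general fibration result, Theorem \ref{thm:snake}, by producing the right NQ-manifold fibration over the tangent Lie algebroid $TL$. First I would fix, on $M_L$, a universal NQ-manifold $\mathbb U^{\mathcal F}$ of $\mathcal F$, whose existence is guaranteed by the hypothesis that $\mathcal F$ permits geometric resolutions together with Theorem \ref{theo:existe} (and, if one needs it globally on $M_L$ rather than germ-wise, Theorem \ref{thm:onetransverseisenough}). Then I would apply Proposition \ref{prop:compatEhresmann} to the complete Ehresmann $\mathcal F$-connection $(M_L,p,H)$: its first item yields an NQ-manifold fibration $P=Tp\circ\rho_{\mathbb U}\colon \mathbb U^{\mathcal F}\to TL$ over the tangent Lie algebroid $TL$, its second item lifts $H$ to an Ehresmann connection $H_{\mathbb U}\subset E_{-1}$ for $P$, and its third item guarantees that $H_{\mathbb U}$ is complete precisely because $H$ is. We are thus exactly in the situation of Theorem \ref{thm:snake}, with total space $\mathbb V=\mathbb U^{\mathcal F}$ and base $B=TL$.

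Applying Theorem \ref{thm:snake} then produces connecting morphisms $\delta$ and, for each $m\in p^{-1}(\ell)$, the exact sequence
$$ \cdots \to \pi_n(\mathbb T_\ell,m)\to \pi_n(\mathbb U^{\mathcal F},m)\stackrel{P}{\longrightarrow}\pi_n(TL,\ell)\stackrel{\delta}{\longrightarrow}\pi_{n-1}(\mathbb T_\ell,m)\to\cdots. $$
It remains to translate each term into the language of the statement. Since the base $B=TL$ is the tangent Lie algebroid of $L$, Example \ref{ex:manifold} identifies $\pi_n(TL,\ell)=\pi_n(L,\ell)$; since the total space is a universal NQ-manifold of $\mathcal F$, the definition of the homotopy groups of a singular foliation gives $\pi_n(\mathbb U^{\mathcal F},m)=\pi_n(\mathcal F|_{M_L},m)$. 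Setting $Hol:=\delta$ produces the desired morphisms \eqref{eq:def_Holonomy}, valued in $\Gamma(\pi_{n-1}(\mathbb T_\ell))$.

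The one step carrying genuine content is the identification of the fiber $\mathbb T_\ell$ with a universal NQ-manifold of the transverse foliation $\mathcal T_\ell$, which is what makes $\pi_n(\mathbb T_\ell,m)=\pi_n(\mathcal T_\ell,m)$. Recall that $\mathbb T_\ell$ is built, as in Theorem \ref{thm:snake}, from $K_{-i}=E_{-i}$ for $i\geq 2$ and $K_{-1}=\mathrm{Ker}(P)$, restricted to the fiber $p^{-1}(\ell)$. Because $H$ is an Ehresmann $\mathcal F$-connection, $p^{-1}(\ell)$ is an $\mathcal F$-transversal, and over it $\mathrm{Ker}(P)=\rho^{-1}(\mathrm{Ker}\,Tp)=\rho^{-1}\big(T(p^{-1}(\ell))\big)$. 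This is exactly the degree $-1$ term of the complex appearing in Lemma \ref{lem:transversefol0}, so I would argue that $\mathbb T_\ell$ coincides with the restriction ${\mathfrak i}_{p^{-1}(\ell)}\mathbb U^{\mathcal F}$; indeed both structures are obtained by restricting the very same Lie $\infty$-brackets of $\mathbb U^{\mathcal F}$ to sections tangent to $p^{-1}(\ell)$, hence agree as NQ-manifolds and not merely as underlying complexes. By Lemma \ref{lem:transversefol0} this restriction is a universal NQ-manifold of $\mathcal F|_{p^{-1}(\ell)}=\mathcal T_\ell$, so its homotopy groups and their smooth sections are by definition those of $\mathcal T_\ell$.

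Substituting these three identifications into the exact sequence above yields \eqref{seq:leaf}, and the fact that $Hol=\delta$ is a group morphism is inherited directly from Theorem \ref{thm:snake}. The main obstacle, as indicated, is this fiber identification: one must be careful that the \emph{abstract} NQ-fiber of the fibration $P$ agrees, as an NQ-manifold with its full bracket structure, with the \emph{geometric} restriction of $\mathbb U^{\mathcal F}$ to the transversal, rather than only matching in their linear parts. Everything else is bookkeeping, transporting the conclusions of Theorem \ref{thm:snake} through the dictionary $TL\leftrightarrow L$, $\mathbb U^{\mathcal F}\leftrightarrow\mathcal F|_{M_L}$, and $\mathbb T_\ell\leftrightarrow\mathcal T_\ell$.
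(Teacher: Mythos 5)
Your proposal is correct and follows essentially the same route as the paper's proof: fix a universal NQ-manifold $\mathbb U^{\mathcal F}$ on $M_L$, use the three items of Proposition \ref{prop:compatEhresmann} to obtain a complete Ehresmann connection for the NQ-fibration $P = Tp\circ\rho\colon \mathbb U^{\mathcal F}\to TL$, apply Theorem \ref{thm:snake}, and translate the terms via Example \ref{ex:manifold} and the identification of the fiber with a universal NQ-manifold of $\mathcal T_\ell$. Your careful justification of the fiber identification through Lemma \ref{lem:transversefol0} is in fact the correct reference for the step the paper disposes of in one line, so if anything your write-up makes the paper's argument more explicit at its only nontrivial point.
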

\begin{proof}
Let $ \NQ{U}^\fol{F}$ be a universal NQ-manifold of $\fol{F} $.
According to the first item of Proposition \ref{prop:compatEhresmann}, the morphism of NQ-manifolds $ \NQ{U}^\fol{F} \to T[1]L$.
is a fibration of NQ-manifolds. According to the second item in Proposition \ref{prop:compatEhresmann},  $P$ admits an Ehresmann connection. According to its third item, the latter 
Ehresmann connection is complete. 

We can therefore apply Theorem \ref{thm:snake}.
Since the base Lie algebroid is the tangent Lie algebroid $TL \to L$, its homotopy groups and the usual homotopy groups of the manifold $L$, see Example \ref{ex:manifold}. Also, the kernel of $ P$ over a given point $ \ell \in L$ is the universal NQ-manifold of the transverse singular foliation $ {\fol{T}}_\ell$  on $p^{-1}(\ell) $ by Lemma \ref{lem:transversefol}. Its homotopy groups are therefore the homotopy groups of the transverse foliation $ {\fol{T}}_\ell$ on $ p^{-1}(\{l\})$. Theorem \ref{thm:snake} yields a family of group morphisms as in Equation \eqref{eq:def_Holonomy}  that make the sequence \eqref{seq:leaf} exact.
\end{proof}

\begin{definition} \label{def:holonomies}
We call \emph{$n$-th holonomy} of the singular foliation $\fol{F} $ near the leaf $L$ the group morphisms \eqref{eq:def_Holonomy}:
  \begin{equation} \label{eq:FolHolonomy}
   Hol \colon  \left\{ \begin{array}{rcll} \pi_n(L,\ell) &\to & \Gamma( \pi_{n-1}(\fol{T}_\ell)) & \hbox{ for $n \geq 3$} \\
    \pi_2(L,\ell) &   \to  &  \mathrm{Center}(\Gamma(I(\mathbf T_\ell))  & \\
    \pi_1(L,\ell) & \to & {\rm Diff}( p^{-1}(\ell) / \fol{T}_\ell )  & \\ \end{array}\right. 
\end{equation}
We call \emph{higher holonomies} the family of these group morphisms and  \emph{higher holonomy long exact sequence} the long exact sequence \eqref{seq:leaf}.
\end{definition}

Above, we used the identification $ \Gamma(\pi_1(\fol{T}_\ell)) \simeq \Gamma(I(\mathbf T_\ell)) $ with $ \mathbf T_\ell$ the universal holonomy groupoid of $\fol{T}_\ell $ seen in Proposition 2.18,  and the identification $ \Gamma(\pi_0(\fol{T}_\ell)) \simeq  {\rm Diff}( p^{-1}(\ell) / \fol{T}_\ell ) $
of Definition \ref{def:pi0}. We also implicitly used Theorem  \ref{thm:snake} to assure that the image of $\pi_2(L,\ell)$ under $Hol$ is valued in the center of $\Gamma(I(\mathbf T_\ell)$.

\begin{remark}
\normalfont
Although the context is slightly different, the first holonomy $$Hol:\pi_1(L)\to \mathrm{Diff}(p^{-1}(\ell)/\fol{T}_\ell)$$ matches the holonomy map constructed by Dazord in \cite{Dazord}. Instead of complete Ehresmann connection,  \cite{Dazord} assumes saturated neighborhoods (called stable in his paper), but a line-by-line comparison shows that both morphisms agree for leaves that admit a neighborhood satisfying both conditions.
\end{remark}

Let $\tilde L$ be a leaf of $ \fol{F}$ through some point $m \in p^{-1}(\ell)$ for $ \fol{F} |_{M_L}$. 
The restricted map $ p|_{\tilde L} \colon \tilde{L} \to L $ is a surjective submersion, and any complete Ehresmann connection on $p \colon M_L \to L $ restricts to an Ehresmann connection on $p|_{\tilde L} $. By a classical theorem for fibrations (cf. eg. \cite{MR1867354}), there exists a long exact sequence:
\begin{equation}\label{eq:les-topology}  \dots \to \pi_n( L , \ell) \to  \pi_{n-1} ( \tilde{S}, m ) \to \pi_{n-1} ( \tilde{L}, m )  \to \cdots \end{equation}
with $\tilde S = p^{-1}(\ell) \cap \tilde{L}$ the fiber of $p|_{\tilde L} \colon \tilde L \to L $.
 \begin{proposition}
 \label{prop:regular_leaves}
 For $m$ a regular point, the higher holonomy long exact sequence \eqref{seq:leaf} coincides with the long exact sequence \eqref{eq:les-topology} associated to the fibration $p |_{\tilde L} \colon \tilde L \to L$.
 \end{proposition}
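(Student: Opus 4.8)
The plan is to identify, term by term and arrow by arrow, the higher holonomy sequence \eqref{seq:leaf} with the topological fibration sequence \eqref{eq:les-topology}; the only genuinely delicate point will be the connecting maps. First I would observe that the open locus of regular points is saturated under $ {\mathcal F}$ (flows of vector fields in $ {\mathcal F}$ preserve $ {\mathcal F}$ and hence the leaf-dimension function), so that the whole leaf $ \tilde L$ of the regular point $m$ consists of regular points; likewise, by the local product decomposition $ p^{-1}(U)\simeq U\times p^{-1}(\ell)$, $ {\mathcal F}\simeq {\mathfrak X}(U)\times {\mathcal T}_\ell$ of Proposition \ref{prop:loctriv}, the transverse foliation $ {\mathcal T}_\ell$ is regular near $m$, with leaf $ \tilde S=p^{-1}(\ell)\cap \tilde L$ through $m$. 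Using Lemma \ref{lem:restrLeaf} and Corollary \ref{cor:homotopyGroups}, all homotopy groups appearing in \eqref{seq:leaf} may be computed from the restrictions to the leaves $ \tilde L$ and $ \tilde S$, where by Example \ref{ex:regular} a universal \dg-manifold is given by the tangent Lie algebroid: $ {\mathfrak i}_{\tilde L}{\mathbb U}^{\mathcal F}\simeq T\tilde L$ and, for the fibre, $ {\mathbb T}_\ell\simeq T\tilde S$.

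With this model in hand, I would read off the corner terms and the first two arrows. By Example \ref{ex:manifold} the tangent Lie algebroids $ T\tilde S$, $ T\tilde L$, $ TL$ have the homotopy groups of the manifolds $ \tilde S$, $ \tilde L$, $ L$, which are precisely the three corner terms of \eqref{eq:les-topology}. The arrow $ i$ of \eqref{seq:leaf} is induced by the inclusion of the fibre \dg-manifold $ {\mathbb T}_\ell\hookrightarrow {\mathbb U}^{\mathcal F}$, which under the above identification is $ T\tilde S\hookrightarrow T\tilde L$ and hence the map induced by the fibre inclusion $ \tilde S\hookrightarrow \tilde L$; the arrow $ P=Tp\circ\rho$ is, on the regular leaf, the tangent map $ T(p|_{\tilde L})\colon T\tilde L\to TL$, inducing $ (p|_{\tilde L})_*$. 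Both agree with the corresponding arrows of \eqref{eq:les-topology}.

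The main obstacle is to match $ Hol|_m=\delta$ with the topological connecting homomorphism $ \partial$ of $ p|_{\tilde L}$. Here I would use that, by Proposition \ref{prop:compatEhresmann}, the complete Ehresmann $ {\mathcal F}$-connection $ (M_L,p,H)$ lifts to a complete Ehresmann connection $ H_{\mathbb U}\subset E_{-1}$ for $ P\colon {\mathbb U}^{\mathcal F}\to TL$ with $ \rho(H_{\mathbb U})=H$; in the tangent model this is exactly the restricted ordinary connection $ H|_{\tilde L}$ on $ p|_{\tilde L}\colon \tilde L\to L$ already used for \eqref{eq:les-topology}. Recall that $ \delta|_m(\sigma)$ is constructed in the proof of Theorem \ref{thm:snake} as the endpoint restriction $ {\mathcal P}^{H}(\underline m,\sigma)$ of the \dg-horizontal lift of Fundamental Lemma \ref{lem:lifts}, with base map $ \sigma$ and constant initial shape $ \underline m$. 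In the case $ {\mathbb V}=T\tilde L$, $ B=TL$ this \dg-horizontal lift reduces to the classical horizontal lift for $ H|_{\tilde L}$ --- this is precisely the homotopy lifting property recalled before Fundamental Lemma \ref{lem:lifts} --- so that $ {\mathcal P}^{H}(\underline m,\sigma)$ is the ordinary parallel transport and $ \delta|_m=\partial$. Since the three corner terms and all three families of arrows coincide, the two long exact sequences are identified, proving the claim. The crux is thus the verification that the \dg-horizontal lift of Fundamental Lemma \ref{lem:lifts} degenerates to the classical one over a tangent Lie algebroid, which is built into the construction but must be checked by unwinding the three composed \dg-morphisms of \eqref{eq:composition}.
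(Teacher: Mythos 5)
Your proof is correct and takes essentially the same route as the paper's: the paper's (much terser) argument simply invokes Example \ref{ex:regularleaf} to identify $\pi_n(\mathcal F,m)=\pi_n(\tilde L,m)$ and $\pi_n(\mathcal T_\ell,m)=\pi_n(\tilde S,m)$ on the regular leaves and then asserts that the construction reduces to \eqref{eq:les-topology}. Your explicit unwinding of the arrows and, in particular, the verification that the \dg-horizontal lift of Fundamental Lemma \ref{lem:lifts} degenerates to classical parallel transport when the fibration is $T\tilde L\to TL$ is precisely the content the paper leaves implicit in the phrase ``our construction thus reduces to the long exact sequence \eqref{eq:les-topology}''.
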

 \begin{proof}
Since both the leaf $\tilde L$ and the connected component of $m$ in  $ \tilde S$ are regular leaves of $ \fol{F}$ and $\fol{T}_\ell$, 
Example \ref{ex:regularleaf} implies
$ \pi_n(\fol{F},m)=\pi_n ( \tilde L , m )  \hbox{ and } \pi_n(\fol{F},m)=\pi_n (\tilde S,m ) $.
 \end{proof}

\subsubsection{The higher holonomies and Androulidalis-Skandalis holonomy groupoid.}

\label{sec:LinkWithHolonomyGroupoid}

We show that the $n$-th holonomy of the singular foliation $\fol{F} $ near the leaf $L$ ``projects'' to a similar group morphism associated to Androulidakis-Skandalis' holonomy Lie groupoid. 

Let $ {\fol{F}}$ be a singular foliation that admits a geometric resolution, and $L$ be a locally closed leaf that admits a complete Ehresmann ${\fol{F}}$-connection $(M_L,p,H)$.
Throughout this section, we denote by $\mathbf F$ the universal holonomy groupoid of $\fol{F} $ and by  $\mathbf T_\ell $ the universal holonomy
groupoid of the transverse singular foliation $ \fol{T}_\ell$ on the fiber $ p^{-1}(\ell)$.

We choose $m \in p^{-1}(L)$ and we denote by $ \tilde L$ and $\tilde S $ leaves through $m$ of $\fol{F} $ and $\fol{T}_\ell $ respectively.
Let us apply Theorem \ref{thm:snake} (or Theorem 1.4 in \cite{Zhu}) 
to the surjective submersion of Lie algebroids,
 $$ P :=  p \circ \rho : A_{\tilde L}  \mapsto   T L   . $$
 The fiber over $ \ell$ is easily identified with the holonomy Lie algebroid $ A_{\tilde S}$ of the leaf $ \tilde{\mathcal S}$. Also, the Ehresmann $ \fol{F}$-connection on $p: M_L \to L $ induces an Ehresmann connection on $P$ as in Proposition \ref{prop:compatEhresmann}. 
 By Proposition 2.2. in \cite{MR3119886}, $ \mathbf  T_\ell|_m$ and $\mathbf F|_m $ are smooth manifolds for all $m \in M$.
  {By Example \ref{ex:LieAlgebroid},} we obtain for all $m \in M$ an exact sequence of the form:
 \begin{equation} \label{exact:Gr} 
 \xymatrix{  \ar[r]\pi_n(\mathbf T_\ell|_m ,m ) &\ar[r]
      \pi_n(\mathbf F|_m  , m) &  \pi_n(L,\ell) \ar[dll]_{Hol_{AS}|_m}  \\
      \pi_{n-1}(\mathbf T_\ell|_m,m )&\cdots & \pi_2(L,\ell) \ar[dll]_{Hol_{AS}|_m}  \\ { I_m} (\mathbf T_\ell ) \ar[r] &  I_m(\mathbf F )\ar[r]   & \pi_1(L,\ell) \ar[dll]_{Hol_{AS}|_m}  \\ p^{-1}(\ell)/\fol{T}_\ell & &   }  \end{equation}
The connecting maps $ Hol_{AS}|_m$, altogether, form group morphisms:
  \begin{equation} \label{eq:GrHolonomy}
   Hol_{AS} \colon  \left\{ \begin{array}{rcll} \pi_n(L,\ell) &\to & \Gamma( \pi_{n-1}(\mathbf T_\ell)) & \hbox{ for $n \geq 3$} \\
    \pi_2(L,\ell) &   \to  &  \mathrm{Center}(\Gamma(I(\mathbf T_\ell)))  & \\
    \pi_1(L,\ell) & \to & {\rm Diff}( p^{-1}(\ell) / \fol{T}_\ell )  & \\ \end{array}\right. 
\end{equation}
Above, $ \Gamma(I(\mathbf T_\ell)) $ stands for sections of the source $ s: \mathbf T_\ell \to p^{-1}(L)$ which are for all $ m \in M$ valued in the isotropy group at $m$ of the groupoid $\mathbf T_\ell  $, a group that we denote by $ I_m(\mathbf T_\ell)$. Also, $ I_m(\mathbf F)$ stands for the isotropy group of $ \mathbf F$ at $m$.

\begin{definition}
We call $n$-th \emph{AS holonomy of $L$} the group morphism \eqref{eq:GrHolonomy}  
and \emph{AS long exact sequence near $L$} the exact sequence \eqref{exact:Gr}.
\end{definition}

We relate these connecting maps to the higher holonomies of the leaves in the next proposition.

\begin{proposition}\label{prop:ASrelation}
 Let $ {\fol{F}}$ be a singular foliation that admits a geometric resolution. Let $L$ be a locally closed leaf that admits a complete Ehresmann ${\fol{F}}$-connection $(M_L,p,H)$.
For every $\ell \in L$ and $n \in {\mathbb N}^*$, the $n$-th holonomy and the $n$-th AS holonomy of the leaf $L$ are equal for $n=1,2$ and for $n \geq 3$ are related by:
\begin{align}
\label{eq:factorize0}
\xymatrix@R-0.8pc{
      && \Gamma(\pi_{n-1}(\fol{T}_\ell )) \ar@{-->}[dd]\\
 \pi_n ( L , \ell )       \ar[rrd]_{Hol_{AS}}\ar@{->}[rru]^{Hol}
      & \\ & & \Gamma(\pi_{n-1}( \mathbf T_\ell )) }
\end{align}
and for all $m \in p^{-1}(\ell)$, we have a natural morphism of long exact sequences:
\begin{align}\label{eq:algsnake0}
\xymatrix@R-0.8pc{
      & \pi_n( {\fol{T}}_\ell,m ) \ar[r]^{i}  \ar@{-->}[dd]
      & \pi_n({\fol{F}}|_{M_L}  , m)  \ar[rd]^{P} \ar@{-->}[dd]
      & 
      & \pi_{n-1}( {\fol{T}}_\ell|_m,m )\ar[r] \ar@{-->}[dd]
      &\dots
      \\ \ar[rd]_{Hol_{AS}|_m} 
      \dots \ar[ru]^{Hol|_{m}}
      &
      & 
      &\ar[rd]_{Hol_{AS}|_m} \pi_n(L,\ell)  \ar[ru]^{Hol|_{m}} 
      &
      &\\
      &\pi_n(\mathbf T_\ell|_m ,m ) \ar[r]^{i}
      &\pi_n(\mathbf F|_m  , m)  \ar[ru]_{P_A}
      &  
      &\pi_{n-1}(\mathbf T_\ell|_m,m )\ar[r] 
      &\dots}
\end{align}
where all vertical lines above are the natural morphisms $q$ of Proposition \ref{prop:computingISotropies}, the upper line is the higher holonomy long exact sequence \eqref{eq:def_Holonomy} of $ \fol{F}$ near $L$ and  the lower one is the AS long exact sequence \eqref{exact:Gr} near $L$. 
\end{proposition}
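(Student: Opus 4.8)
The plan is to realize both long exact sequences as outputs of Theorem \ref{thm:snake} applied to two NQ-manifold fibrations over the \emph{same} base $TL$, related by the $1$-truncation morphism, and then to invoke functoriality of the whole parallel-transport construction. Fix a universal NQ-manifold $\mathbb U^\mathcal F$ of $\mathcal F$ on $M_L$ with linear part $(E_\bullet,\mathrm d,\rho)$, giving the NQ-fibration $P=Tp\circ\rho\colon \mathbb U^\mathcal F\to TL$ whose fiber over $\ell$ is the universal NQ-manifold $\mathbb T_\ell$ of $\mathcal T_\ell$ (Lemma \ref{lem:transversefol}). By Lemma \ref{lem:restrLeaf} all homotopy groups based at $m$ only see the leaves $\tilde L$ and $\tilde S$, so I may restrict to $\mathfrak i_{\tilde L}\mathbb U^\mathcal F\to TL$ with fiber $\mathfrak i_{\tilde S}\mathbb T_\ell$. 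Lemma \ref{lem:transversefol} provides the $1$-truncation NQ-morphisms $Q_{\tilde L}\colon \mathfrak i_{\tilde L}\mathbb U^\mathcal F\to A_{\tilde L}=\mathfrak i_{\tilde L}E_{-1}/\mathrm d\,\mathfrak i_{\tilde L}E_{-2}$ and, over the fiber, $Q_{\tilde S}\colon \mathfrak i_{\tilde S}\mathbb T_\ell\to A_{\tilde S}$. Since $\rho\circ\mathrm d=0$, the anchor descends to $A_{\tilde L}$ and one checks $P_A\circ Q_{\tilde L}=P|_{\tilde L}$ with $Q_{\tilde S}=Q_{\tilde L}|_{\mathrm{fiber}}$, so that $Q_{\tilde L}$ is a morphism of NQ-manifold fibrations over $\mathrm{id}_{TL}$ carrying fiber to fiber.

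The technical heart is that parallel transport intertwines with $Q_{\tilde L}$. Let $H_{\mathbb U}\subset E_{-1}$ be the complete Ehresmann connection for $P$ from Proposition \ref{prop:compatEhresmann}, with $\rho(H_{\mathbb U})=H$. Because $\ker Q_{\tilde L}=\mathrm d\,\mathfrak i_{\tilde L}E_{-2}\subset K_{-1}=\ker P$ and $Q_{\tilde L}$ maps $K_{-1}$ onto $\ker P_A$, the image $Q_{\tilde L}(H_{\mathbb U})$ is a genuine complement of $\ker P_A$, hence an Ehresmann connection for $P_A$; its induced base vector field on $\tilde L$ is $\rho_{A_{\tilde L}}(Q_{\tilde L}\tilde H_{\mathbb U}(b))=\rho(\tilde H_{\mathbb U}(b))$, which is complete by hypothesis, so Proposition \ref{prop:aboutComplete} shows $Q_{\tilde L}(H_{\mathbb U})$ is complete. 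The crux is then that $Q_{\tilde L}$ is an NQ-morphism intertwining the homological vector fields and that the contractions $\mathfrak i_{\tilde H_{\mathbb U}(b)}$ and $\mathfrak i_{Q_{\tilde L}\tilde H_{\mathbb U}(b)}$ are $Q_{\tilde L}$-related; consequently the two horizontal-lift vector fields are $Q_{\tilde L}$-related, and by the flow construction of the Fundamental Lemma \ref{lem:lifts} their flows intertwine, giving $Q_{\tilde L}\circ\mathcal L^{H_{\mathbb U}}(e,b)=\mathcal L^{Q_{\tilde L}(H_{\mathbb U})}(Q_{\tilde L}\circ e,b)$ for every initial shape $e$ and base map $b$. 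Restricting to $\{1\}\times\Sigma$ gives the same identity for parallel transports, and specializing to $e=\underline m$ (so $Q_{\tilde L}\circ\underline m=\underline m$) and $b=\sigma$ yields $Q_{\tilde S}\circ\delta(\sigma)=\delta_{AS}(\sigma)$, that is $Hol_{AS}=q\circ Hol$. Since the dashed arrow in \eqref{eq:factorize0} is precisely the pushforward by $Q_{\tilde S}$, which is the natural map $q$ of Proposition \ref{prop:computingISotropies} for $\mathcal T_\ell$ and preserves smoothness of sections, this establishes \eqref{eq:factorize0} for $n\geq 3$.

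It remains to treat $n=1,2$ and to assemble \eqref{eq:algsnake0}. Applying the sequence \eqref{eq:computingISotropies} to $\mathcal T_\ell$ shows that the pushforward $q$ by $Q_{\tilde S}$ is an isomorphism $\pi_1(\mathcal T_\ell,m)\xrightarrow{\sim}I_m(\mathbf T_\ell)$ and the identity on orbit spaces $\pi_0(\mathcal T_\ell)=p^{-1}(\ell)/\mathcal T_\ell=\pi_0(A_{\tilde S})$, the anchor and hence the leaves being unchanged by truncation; under these identifications the factorization becomes the equality $Hol=Hol_{AS}$ for $n=1,2$, the case $n=2$ being compatible with the centrality clause of Theorem \ref{thm:snake} since an isomorphism preserves centers. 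Finally, the two rows of \eqref{eq:algsnake0} are the long exact sequences produced by Theorem \ref{thm:snake} for $P$ and $P_A$, and the three families of vertical maps are the pushforwards by $Q_{\tilde S}$, by $Q_{\tilde L}$, and $\mathrm{id}_{TL}$, i.e.\ the maps $q$ of Proposition \ref{prop:computingISotropies}; the squares containing $i$ and $P$ commute because $Q_{\tilde L}$ carries fiber to fiber and $P_A\circ Q_{\tilde L}=P|_{\tilde L}$, while the squares containing the connecting maps commute by the parallel-transport identity above, so \eqref{eq:algsnake0} is a morphism of long exact sequences. The main obstacle is the connection-compatibility step of the second paragraph: one must use $Q_{\tilde L}(H_{\mathbb U})$ as the connection on $A_{\tilde L}$ and verify that the homological vector fields and contractions are $Q_{\tilde L}$-related, so that the two parallel transports genuinely intertwine despite $Q_{\tilde L}$ collapsing $\mathrm d\,E_{-2}$ and all higher $E_{-i}$.
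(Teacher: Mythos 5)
Your proposal is correct and takes essentially the same route as the paper: the paper's entire proof is the one-line observation that the image under the truncation $Q\colon \mathfrak i_{\tilde L}\mathbb U^{\mathcal F}\to A_{\tilde L}$ of the horizontal lift $\mathcal L^H(\sigma,\underline m)$ is the horizontal lift for the Lie algebroid fibration $A_{\tilde L}\to TL$, which is precisely your parallel-transport intertwining identity $Q_{\tilde L}\circ\mathcal L^{H_{\mathbb U}}(e,b)=\mathcal L^{Q_{\tilde L}(H_{\mathbb U})}(Q_{\tilde L}\circ e,b)$. You merely make explicit the details the paper leaves implicit (that $Q_{\tilde L}(H_{\mathbb U})$ is a complete Ehresmann connection for $P_A$, the $Q$-relatedness of the flows in the Fundamental Lemma's construction, and the $\pi_1/\pi_0$ identifications from Proposition \ref{prop:computingISotropies} giving equality for $n=1,2$), all of which check out.
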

\begin{proof}
Let $\tilde L$ be the leaf through $m \in p^{-1}(\ell)$.
For all $ \sigma \colon S^n \to L$,
consider the horizontal lift $ \mathcal L^H (\sigma, \underline{m}) \colon T[1](I \times S^n) \to \NQ{U}^\fol{F}$ in Fundamental Lemma \ref{lem:lifts}. Its image through $  \NQ{U}^\fol{F} \to A_{\tilde L}$ is the horizontal lift for the Lie algebroid fibration $A_{\tilde L} \to TL $. 
 This implies the commutativity of the diagrams \eqref{eq:factorize0} and \eqref{eq:algsnake0}.
\end{proof}

\begin{remark}
\normalfont
The restriction of $ \fol{F}$ to the open subset $M_{reg} \subset M_L $ of regular points is a regular foliation. Its holonomy groupoid (more precisely, its universal cover) is the restriction to $ M_{reg}$ of the universal holonomy groupoid $ \mathbf F$. This gives an alternative proof of Proposition \ref{prop:regular_leaves} as an immediate consequence and Proposition \ref{prop:ASrelation} and Example \ref{ex:regularleaf}.
\end{remark}

\begin{remark}
\normalfont
Iakovos Androulidakis and Marco Zambon \cite{AZ13,AZ2} have also defined a holonomy morphism for a leaf of a singular foliation, which goes from the holonomy groupoid to some quotients of jets of vector fields on the transverse singular foliation. It is not a higher holonomy in our sense. However, it can be related to our construction as follows: Any element of the holonomy groupoid comes from a path in $A_L$ from $\ell$ to $\ell'$. Using the Ehresmann $\fol{F}$-connection, for any $m\in p^{-1}(\ell)$ this path can be lifted to a path ending in $\phi(m)\in p^{-1}(\ell')$. The class of the map $\phi$ modulo $\mathrm{exp(I_\ell\fol{T}_\ell)}$ is the holonomy of \cite{AZ13,AZ2}. Alfonso Garmendia and Joel Villatoro indeed reconstructed the holonomy groupoid using such paths, see \cite{GarVilt}.
\end{remark}

\subsubsection{The higher holonomies for singular foliations coming from a Lie algebroid.}

\label{sec:IfLieAlgebroid}

We show that if there exists a Lie algebroid defining the singular foliation $ \fol{F}$, then the  $n$-th holonomy is obtained by projection from a group morphism associated to this algebroid, introduced by Olivier Brahic and Chenchang Zhu in \cite{Zhu}, that we call the  $n$-th BZ holonomy of $ \fol{F}$ near~$L$.

Throughout this section, we consider an integrable Lie algebroid $(A,[ \cdot, \cdot ]_A, \rho_A)$ over $M$. We denote by $\mathbf A $ its fundamental groupoid (also referred to as the Weinstein groupoid). Then $ {\fol{F}} = \rho_A(\Gamma(A))$  is a singular foliation.

Let us adapt the main results of \cite{Zhu} in our context: Assume that a locally closed leaf $ L$ comes equipped with a complete Ehresmann connection $ (M_L,p,H)$. 
By construction, $ P_A:= p \circ \rho_A \colon A \to TL$  is a surjective submersion and a Lie algebroid morphism. Its kernel over a point $\ell \in L$ is the sub-Lie algebroid  $(  K , [\cdot, \cdot]_A ,\rho_ A )$  with  $   K|_m  = {\mathrm{Ker}}(T_m p \circ \rho_A|_m) $. 
We denote by $\mathbf K $ its fundamental Lie groupoid.
  Theorem \ref{thm:snake} or Theorem 1.4 in \cite{Zhu}, together with Example \ref{ex:LieAlgebroid}, can then be applied to yield a family of group morphisms:
 \begin{align}
\label{eq:AHolonomy} 
\delta_{BZ} \colon  \left\{
\begin{array}{rcll} \pi_n(L,\ell) &\to & \Gamma( \pi_{n-1}(\mathbf K_\ell)) & \hbox{ for $n \geq 3$} \\
    \pi_2(L,\ell) &   \to  &  \mathrm{Center}(\Gamma(I(\mathbf K_\ell)))  & \\
    \pi_1(L,\ell) & \to & \mathrm{Diff}( p^{-1}(\ell) / \fol{T}_\ell )  & \\ \end{array}\right. 
\end{align}
making for all $ m \in p^{-1}(\ell)$ the following long sequence exact:
\begin{align}
\label{exact:BZ}
\xymatrix{  \ar[r]\pi_n(\mathbf K_\ell|_m ,m ) &\ar[r]
      \pi_n(\mathbf A|_m  , m) &  \pi_n(L, \ell) \ar[dll]_{\delta_{BZ}|_m}  \\
      \pi_{n-1}(\mathbf K_\ell|_m,m )&\cdots & \pi_2(L,\ell) \ar[dll]_{\delta_{BZ}|_m}  \\ { I_m} (\mathbf K_\ell ) \ar[r] &  I_m(\mathbf A_\ell )\ar[r]   & \pi_1(L,\ell) \ar[dll]_{\delta_{BZ}|_m}  \\ p^{-1}(\ell)/\fol{T}_\ell & &   }
\end{align}

\begin{definition}
Let $(A,[ \cdot, \cdot ]_A, \rho_A) $ be a Lie algebroid with a complete Ehresmann connection over $M$. 
We call $n$-th \emph{BZ holonomy\footnote{BZ stands for Olivier Brahic and Chenchang Zhu.} of $A$ near $L$} the group morphism \eqref{eq:AHolonomy}  
and \emph{BZ long exact sequence of $A$ near $L$} the long exact sequence \eqref{exact:BZ}.
\end{definition}

Corollary 3.12 in \cite{Zhu} describes the evaluation at $ \ell$ of the $BZ$-holonomy. Here is the relation between both family of holonomies:
\begin{proposition}
\label{prop:BZ}
Let $A$ be an integrable Lie algebroid with fundamental groupoid $\mathbf A $ and $ {\fol{F}} = \rho_A(\Gamma(A))$ be its induced singular foliation.
For every locally closed leaf $L$ that admits a complete Ehresmann ${\fol{F}}$-connection $(M_L,p,H)$, the first holonomy and the first BZ-holonomy coincide, and for $n \geq 2$ the $n$-th holonomy of $\fol{F}$ near $L$ factors through the $n$-th BZ-holonomy of $A$ near $L$:
\begin{align}
\label{eq:factorize}
\xymatrix@R-0.8pc{
      &&\Gamma(\pi_{n-1}(  \mathbf K_\ell ))\ar@{-->}[dd]\\
 \pi_n ( L , \ell )       \ar[rru]^{\delta_{BZ}}\ar@{->}[rrd]_{Hol}
      & \\ & &\Gamma(\pi_{n-1}(\fol{T}_\ell )) }
      \xymatrix@R-0.8pc{
      &&{\mathrm{Center}} \left(\Gamma(I(\mathbf K_\ell ))\right) \ar@{-->}[dd]\\
 \pi_2 ( L , \ell )       \ar[rru]^{\delta_{BZ}}\ar@{->}[rrd]_{Hol}
      & \\ & &{\mathrm{Center}} \left( \Gamma(\pi_1(\fol{T}_\ell )) \right)}
\end{align}
For all $m \in p^{-1}(\ell)$, moreover, we have a natural morphism of long exact sequences:
\begin{align}\label{eq:algsnake}
\xymatrix@R-0.8pc{
      &\pi_n(\mathbf K_\ell|_m ,m ) \ar[r]^{i}\ar@{-->}[dd] 
      &\pi_n(\mathbf A|_m  , m)  \ar@{-->}[dd] \ar[rd]^{P_A}
      &  
      &\pi_{n-1}(\mathbf K_\ell|_m,m )\ar[r]\ar@{-->}[dd]  
      &\dots\\ \ar[ru]^{\delta_{BZ}|_m} 
      \dots \ar[rd]_{Hol|_{m}}
      &
      & 
      &\ar[ru]^{\delta_{BZ}|_m} \pi_n(L,\ell)  \ar[rd]_{Hol|_{m}} 
      &
      &\\
      & \pi_n( {\fol{T}}_\ell,m ) \ar[r]^{i} 
      & \pi_n({\fol{F}}|_{M_L}  , m)  \ar[ru]_{P}
      & 
      & \pi_{n-1}( {\fol{T}}_\ell,m )\ar[r]
      &\dots}
\end{align}
where the upper line is the BZ long exact sequence \eqref{exact:BZ} of $A$ near $L$ and the lower one is the higher holonomy long exact sequence \eqref{eq:def_Holonomy} of $ \fol{F}$ near $L$. 
\end{proposition}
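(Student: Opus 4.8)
The plan is to exploit the universal property of $\mathbb U^\mathcal F$ to produce a single morphism of NQ-manifold fibrations relating the Brahic--Zhu data (the Lie algebroid $A$) to the higher-holonomy data (the universal NQ-manifold), and then to invoke the naturality of the parallel transport built in Fundamental Lemma \ref{lem:lifts}. First I would produce the comparison morphism: since $A$ is an NQ-manifold over $M$ inducing the sub-foliation $\mathcal F = \rho_A(\Gamma(A))$, Theorem \ref{theo:onlyOne} supplies an NQ-morphism $\Psi \colon A \to \mathbb U^\mathcal F$ over $\mathrm{id}_M$, unique up to homotopy, whose linear part $\Psi_1 \colon A \to E_{-1}$ intertwines anchors, $\rho_{\mathbb U} \circ \Psi_1 = \rho_A$. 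Consequently $P \circ \Psi = Tp \circ \rho_{\mathbb U} \circ \Psi_1 = Tp \circ \rho_A = P_A$, so $\Psi$ is a morphism of fibrations over the common base $TL$. It restricts over $\ell$ to an NQ-morphism of fibers $K \to \mathbb T_\ell$, where $\mathbb T_\ell$ is a universal NQ-manifold of $\mathcal T_\ell$ by Lemma \ref{lem:transversefol} and $K = \ker(P_A)$ is the Lie algebroid inducing $\mathcal T_\ell$; on homotopy groups this restriction is precisely the canonical map $\psi$ appearing in \eqref{eq:factorize}.

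The main obstacle is to arrange the two complete Ehresmann connections to be $\Psi$-compatible. Starting from the complete Ehresmann $\mathcal F$-connection $H \subset TM$ of the hypothesis, Proposition \ref{prop:compatEhresmann} applied to the NQ-manifold $A$ yields a complete Ehresmann connection $H_A \subset A$ for $P_A$ with $\rho_A(H_A) = H$. I would then set $H_{\mathbb U} := \Psi_1(H_A)$ and verify it is a legitimate complete connection for $P$. On $H_A$ the morphism $\Psi_1$ is injective, since $P_A|_{H_A}$ is injective and $P_A = P \circ \Psi_1$; hence $\Psi_1(H_A)$ is a subbundle of rank $\dim L$. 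It meets $\ker(P)$ trivially, because $\Psi_1(h) \in \ker P$ forces $P_A(h) = P(\Psi_1(h)) = 0$, i.e. $h \in H_A \cap \ker P_A = 0$. As $P \colon E_{-1} \to TL$ is fiberwise surjective, $\operatorname{rank}\ker P = \operatorname{rank}E_{-1} - \dim L$, so a rank count gives $E_{-1} = \Psi_1(H_A) \oplus \ker P$. Completeness is then immediate from Proposition \ref{prop:compatEhresmann}(3), since $\rho_{\mathbb U}(H_{\mathbb U}) = \rho_A(H_A) = H$ is the same complete geometric connection.

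Third, I would transfer this compatibility to parallel transport. With $H_{\mathbb U} = \Psi_1(H_A)$, the horizontal lifts of $\tfrac{\partial}{\partial t}$ are $\Psi_1$-related, and since $\Psi$ intertwines $Q_A$ and $Q_{\mathbb U}$, the degree-zero flows used in Lemma \ref{lem:lifts} are $\Psi$-related; hence $\Psi \circ \mathcal L^{H_A}(e,b) = \mathcal L^{H_{\mathbb U}}(\Psi \circ e, b)$, and likewise for the parallel transport on $\{1\}\times\Sigma$. Choosing these compatible connections is a legitimate choice in both constructions, so the induced connecting maps are the $n$-th BZ-holonomy and the $n$-th holonomy respectively. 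It then follows at once that $\Psi_*$ commutes with the fiber inclusions $i$, with the projections $P_A$ and $P$, and with the connecting maps $\delta_{BZ}$ and $Hol$. This yields the morphism of long exact sequences \eqref{eq:algsnake}, whose vertical arrows are the universality maps $\Psi_*$ (on fibers, the map $\psi$), and the factorizations $Hol = \psi \circ \delta_{BZ}$ of \eqref{eq:factorize}; the landing in the respective centers for $n=2$ is inherited from the corresponding assertion in Theorem \ref{thm:snake}, and compatibility with restriction to parameter submanifolds (Proposition \ref{prop:restrictions}) guarantees that the identities hold at the level of smooth sections.

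Finally, for $n=1$ the two target groups already coincide, both equal to $\mathrm{Diff}(p^{-1}(\ell)/\mathcal T_\ell)$, so I would argue directly that the two holonomies are equal rather than merely related. The orbit-space diffeomorphism obtained by parallel transporting the constant shape $\underline m$ along a loop in $L$ is governed by the time-one flow of the geometric horizontal vector field on $M$, namely $\rho_{\mathbb U}\circ\tilde H_{\mathbb U}(\dot\gamma) = \rho_A\circ\tilde H_A(\dot\gamma)$; since these vector fields coincide by construction of $H_{\mathbb U}$, the first holonomy and the first BZ-holonomy agree.
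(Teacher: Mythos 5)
Your proposal is correct and follows essentially the same route as the paper's own proof, which likewise produces the comparison morphism $A \to \mathbb U^{\mathcal F}$ from the universality Theorem \ref{theo:onlyOne} (restricted over $p^{-1}(\ell)$ to compare the fibers) and concludes by identifying the horizontal lifts of the Fundamental Lemma \ref{lem:lifts} for $P_A$ and for $P$; your additional steps --- the explicit compatible complete connection $H_{\mathbb U} = \Psi_1(H_A)$ with its rank-count verification, and the direct flow argument for $n=1$ --- merely make explicit choices that the paper leaves implicit. One caution: the on-the-nose identity $\Psi \circ \mathcal L^{H_A}(e,b) = \mathcal L^{H_{\mathbb U}}(\Psi \circ e, b)$ (which the paper asserts in the same words) is literally justified only when $\Psi$ is strict, since for nonvanishing higher Taylor coefficients $\Psi_k$, $k \geq 2$, the degree $-1$ contraction fields $X_A$ and $X_{\mathbb U}$ are $\Psi$-related on degree-one generators but not on generators of degree $\geq 2$, so the degree-zero flows need not be strictly intertwined; this is harmless, however, because $\Psi \circ \mathcal L^{H_A}(\underline{m},\sigma)$ is in any case \emph{a} lift of $\sigma$ with initial shape $\underline{m}$, and any two such lifts have parallel transports homotopic in $\mathbb T_\ell$ by the concatenation argument via Lemma \ref{lem:hotint} (exactly as in the proof of Theorem \ref{thm:snake}), which is all that the commutativity of \eqref{eq:factorize} and \eqref{eq:algsnake} at the level of homotopy classes requires.
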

\begin{proof}
By the universality Theorem \ref{theo:onlyOne}, there exists a (unique up to homotopy) NQ-morphism 
 $\Phi\colon A[1] \to  \NQ{U}^\fol{F} $ and a unique NQ-morphism 
  $  {\NQ{T}_\ell}\to \NQ{U}^{{\fol{T}}_\ell} $ (which can chosen to be the restriction of $ \Phi$ to $ p^{-1}(\ell)$). If $L(m,\sigma)$ is the lift in the Fundamental Lemma \ref{lem:lifts} applied to a base map $ \sigma : S^n \to L$ and an initial shape $e = \underline{m}$ for the fibration $P_A \colon A \to TL $, then $ \Phi \circ L(e,m) $ is the lift for the same   base map $ \sigma $ and the same initial shape $ \underline{m}$ for the NQ-manifold fibration $ P : \NQ{U}^{\fol{F}} \to T[1]L$.  This implies that $ \Phi$ and $ \Phi|_{p^{-1}(\ell)}$ are NQ-morphisms whose induced vertical arrows of \eqref{eq:factorize} and \eqref{eq:algsnake} satisfy all required conditions. 
\end{proof}

 One of the most intriguing open question about singular foliations, that first appeared in \cite{AZ13}, is \emph{``Let $ {\fol{F}}$ be a finitely generated singular foliation, do we have $ {\fol{F}} = \rho_A( \Gamma(A))$ for some Lie algebroid $ (A,[ \cdot, \cdot ]_A, \rho_A)$? Is it at least true locally?''}. Let us recapitulate a few points about this problem (see Example 3.9--3.16 in \cite{LLS}):
 \begin{enumerate}
   \item  For many singular foliations, no natural Lie algebroid  seems to exist (\emph{e.g. }vector fields on $ \mathbb R^n$ vanishing at order two at zero, or tangent to a given affine variety).
     \item For many singular foliations, such a Lie algebroid is known to exist  (\emph{e.g. }symplectic leaves of a Poisson structure, orbits of a Lie group action).
     \item  But even if such a Lie algebroid exists, it may not be unique. 
 \end{enumerate}
 For singular foliations as in item 2, Proposition \ref{prop:BZ} is useful to compute the holonomies.  But item 1 warns us that it might not be always possible. Item 3 means that we should not hope that the BZ holonomies ``know more'' about the singular leaf than the holonomies that we constructed: they certainly ``know more'' in view of (\ref{eq:factorize}), but they may contain information which is not related to the singular foliation itself but only to the particular Lie algebroid defining it.

\subsection{Examples and particular cases}
\label{sec:exsing}

Let us specialize our construction of the higher holonomies to several types of singular foliations.

\vspace{0.2cm}
{\textbf{Projective singular foliations.}} We say that a singular foliation $\fol{F} $ is \emph{Debord} or \emph{projective} when $\fol{F} $ is projective as a module over $ \Cinfty{M}$. By the Serre-Swann theorem, it means that $ {\fol{F}}$ is the image of a Lie algebroid $A$ through an anchor map which is injective on a dense open subset. Since the $\Cinfty{M}$-module of sections of the Lie algebroid $T{\fol{F}}$ is isomorphic to the $\Cinfty{M}$-module $ {\fol{F}}$, the Lie algebroid $A[1]$ is a universal NQ-manifold of $ {\fol{F}}$.
The following is an obvious consequence of Proposition \ref{prop:BZ}:

\begin{corollary}
	Let $ {\fol{F}}$ be a Debord singular foliation with associated Lie algebroid $A$, and $L$ a leaf that admits an Ehresmann connection on a neighborhood. The higher holonomies and the higher holonomy long exact sequence of ${\fol{F}} $ at  $L$ coincide with the BZ holonomies and the BZ long exact sequence of the Lie algebroid $A$ at $L$.
\end{corollary}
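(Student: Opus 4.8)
The plan is to exploit the observation, recalled just above the statement, that for a Debord foliation the defining Lie algebroid $A$ is \emph{itself} a universal NQ-manifold of $\mathcal F$. This reduces the comparison of the two families of holonomies to the remark that they are both obtained by applying Theorem \ref{thm:snake} to one and the same fibration, so that the two constructions are not merely related by a factorisation but literally identical.

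First I would normalise the choice of universal object by setting $\mathbb U^{\mathcal F} = A$, which is legitimate by Corollary \ref{coro:unique}. With this choice the NQ-manifold fibration $P = Tp \circ \rho \colon \mathbb U^{\mathcal F} \to TL$ entering the construction of the higher holonomies in Theorem \ref{theo:holonomies} is \emph{literally} the Lie algebroid submersion $P_A = p \circ \rho_A \colon A \to TL$ entering the BZ construction; moreover the complete Ehresmann connection $H_{\mathbb U}$ furnished by Proposition \ref{prop:compatEhresmann} may be taken to be the one already used for $A$. Thus the inputs to Theorem \ref{thm:snake} coincide in the two cases.

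Second I would identify the fibers. Because $\mathbb U^{\mathcal F}=A$ is concentrated in degree $-1$, the definition of the fiber of an NQ-manifold fibration gives $\mathbb T_\ell = \mathrm{Ker}(P)|_{p^{-1}(\ell)}$, which is exactly the BZ kernel Lie algebroid $K_\ell$. Since $p^{-1}(\ell)$ is an $\mathcal F$-transversal and $K_\ell = \rho^{-1}(T(p^{-1}(\ell)))$, Lemma \ref{lem:transversefol0} shows that $K_\ell$ is a universal NQ-manifold of $\mathcal T_\ell$; hence $\mathbf T_\ell = \mathbf K_\ell$ and $\pi_n(\mathcal T_\ell) = \pi_n(\mathbf K_\ell)$ as bundles of groups. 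Consequently the long exact sequence \eqref{seq:leaf} and the long exact sequence \eqref{exact:BZ} have identical terms and identical connecting morphisms, which is precisely the asserted coincidence of the higher holonomies with the BZ holonomies and of the two long exact sequences.

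As a cross-check I would note that the same conclusion drops out of Proposition \ref{prop:BZ}: the universality morphism $\Phi \colon A \to \mathbb U^{\mathcal F}$ appearing there may be chosen to be a homotopy equivalence (the identity, after the normalisation above), so the dashed vertical comparison maps in \eqref{eq:factorize} and \eqref{eq:algsnake} become isomorphisms and the factorisations collapse to equalities. I expect no genuine obstacle; the only step demanding a little care is the identification $\mathbb T_\ell = K_\ell$ of the two notions of fiber together with its universality for $\mathcal T_\ell$, and this is delivered exactly by Lemma \ref{lem:transversefol0}.
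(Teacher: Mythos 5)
Your proposal is correct and follows essentially the same route as the paper: the paper's one-line proof (``an obvious consequence of Proposition \ref{prop:BZ}'') amounts precisely to your normalisation $\mathbb U^{\mathcal F}=A$ (justified, as you note, by the fact that $A$ is itself a universal NQ-manifold of a Debord foliation), under which the comparison morphism $\Phi$ of Proposition \ref{prop:BZ} is the identity and the factorisations \eqref{eq:factorize} and \eqref{eq:algsnake} collapse to equalities. Your direct identification of the two applications of Theorem \ref{thm:snake} to the same fibration $P=P_A\colon A\to TL$, with the fiber identification $\mathbb T_\ell = K_\ell = \rho^{-1}(T(p^{-1}(\ell)))$ handled by Lemma \ref{lem:transversefol0} (so that $\mathcal T_\ell$ is again Debord with associated algebroid $K_\ell$, whence $\pi_n(\mathcal T_\ell,m)=\pi_n(\mathbf K_\ell|_m,m)$ by Example \ref{ex:Debord1}), is exactly what makes that consequence ``obvious''.
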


Of course, regular foliations are an instance of this situation.

\begin{example}
\normalfont
    For a regular foliation, only $ Hol_1$ is not zero, and coincides with the usual holonomy.
\end{example}

\vspace{0.2cm}
{\textbf{Singular foliations arising from a Lie algebroid of minimal rank.}}  Let $L$ be a leaf of a singular foliation that admits an Ehresmann connection $ (M_L,p,H)$, and $A_L$ its holonomy Lie algebroid.
We say that the leaf $L $ admits a Lie algebroid of minimal rank if there exists a Lie algebroid $A$ on $M_L$ defining $\fol{F}$ such that $A|_L=A_L$.
 
 Proposition \ref{prop:ASrelation} states that $Hol_{AS}$ and $Hol$ agree on $ \pi_1(L,m)$ and $\pi_2(L,m) $ for any leaf $L$ of a singular foliation that admits a complete Ehresmann connection. 
 For the higher holonomies, the following is an immediate consequence of Proposition \ref{prop:BZ}:

\begin{corollary}
\label{cor:minimalRank}
    If $L$ admits a Lie algebroid of minimal rank $A$, then there exist for all $ n \geq 3$ group morphisms $\phi: \pi_{n-1}( \mathbf T_\ell|_m ,m  )\to \pi_{n-1} ( \fol{T}_\ell , m)$ such that $Hol= \phi \circ Hol_{AS}$.
\end{corollary}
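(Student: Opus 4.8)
The plan is to deduce the statement by splicing together the two factorizations already at our disposal: $Hol_{AS}=q\circ Hol$ from Proposition~\ref{prop:ASrelation}, and $Hol=\psi\circ\delta_{BZ}$ from Proposition~\ref{prop:BZ}. Here $q\colon\pi_{n-1}(\mathcal T_\ell,m)\to\pi_{n-1}(\mathbf T_\ell|_m,m)$ is the natural morphism of Proposition~\ref{prop:computingISotropies} (the vertical arrow of diagram~\eqref{eq:factorize0}), and $\psi\colon\pi_{n-1}(\mathbf K_\ell|_m,m)\to\pi_{n-1}(\mathcal T_\ell,m)$ is the morphism induced by the NQ-morphism $K_\ell\to\mathbb U^{\mathcal T_\ell}$ used in the proof of Proposition~\ref{prop:BZ}. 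The whole point is that, under the minimal rank hypothesis, $\mathbf K_\ell$ and $\mathbf T_\ell$ coincide and $\psi$ becomes a section of $q$; the candidate $\phi:=\psi$ will then do the job.

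First I would establish the key geometric input: if $A|_L=A_L$, then on every leaf $\tilde S$ of $\mathcal T_\ell$ the fibre $K_\ell=\mathrm{Ker}(Tp\circ\rho_A)|_{p^{-1}(\ell)}$ is canonically isomorphic to the holonomy Lie algebroid $A_{\tilde S}$. Both are transitive Lie algebroids over $\tilde S$, and since $K_\ell$ defines $\mathcal T_\ell$ there is a natural surjection $K_\ell|_{\tilde S}\twoheadrightarrow A_{\tilde S}=\mathcal T_\ell/I_{\tilde S}\mathcal T_\ell$; so it suffices to compare ranks. The rank of $K_\ell$ is constant, equal to $\mathrm{rank}(A)-\dim L=\dim H^{-1}(\mathcal F,\ell)$, while $\mathrm{rank}(A_{\tilde S})=\dim\tilde S+\dim H^{-1}(\mathcal T_\ell,m)$ for $m\in\tilde S$, and $\dim H^{-1}(\mathcal T_\ell,m)=\dim H^{-1}(\mathcal F,m)$ by transverse invariance (Lemma~\ref{lem:transversefol0}). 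These two ranks agree because the existence of a minimal rank algebroid forces the leaf/isotropy balance $\dim\tilde S+\dim H^{-1}(\mathcal F,m)=\dim H^{-1}(\mathcal F,\ell)$ along $\mathcal T_\ell$, i.e. minimality propagates from the singular point $\ell$ to every transverse leaf. Granting this, equal rank plus the surjection gives $K_\ell|_{\tilde S}=A_{\tilde S}$, hence $\mathbf K_\ell=\mathbf T_\ell$ by Remark~\ref{rmk:holonomyLieAlgerbroid}; so $\psi$ is defined on $\pi_{n-1}(\mathbf T_\ell|_m,m)$, and the composite $K_\ell\to\mathbb U^{\mathcal T_\ell}\to A_{\tilde S}$ being the identity of $A_{\tilde S}$ means exactly $q\circ\psi=\mathrm{id}$.

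Finally I would assemble the pieces. Set $\phi:=\psi$. Proposition~\ref{prop:BZ} gives $Hol=\psi\circ\delta_{BZ}$, whence $\mathrm{Im}(Hol)\subseteq\mathrm{Im}(\psi)$; since $q\circ\psi=\mathrm{id}$, the idempotent $\psi\circ q$ restricts to the identity on $\mathrm{Im}(\psi)$, hence on $\mathrm{Im}(Hol)$. Combined with $Hol_{AS}=q\circ Hol$ this yields
\[
\phi\circ Hol_{AS}=\psi\circ q\circ Hol=Hol,
\]
the desired identity (the cases $n=1,2$ being Proposition~\ref{prop:ASrelation}). The main obstacle is the rank identity of the middle paragraph: that a Lie algebroid realizing the holonomy algebroid at the most singular leaf is automatically of minimal rank at every nearby leaf. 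I expect this to follow from the local normal form of $A$ near $L$, where $A$ is modelled on the action algebroid of the linear isotropy representation of $A_L$ and the balance $\dim(\text{orbit})+\dim(\text{stabiliser})=\dim H^{-1}(\mathcal F,\ell)$ is immediate; the remaining steps are the routine functoriality of $q$, $\psi$ and $\delta_{BZ}$ already packaged in Propositions~\ref{prop:BZ} and~\ref{prop:ASrelation}.
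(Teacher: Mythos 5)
Your overall skeleton---splicing $Hol_{AS}=q\circ Hol$ (Proposition~\ref{prop:ASrelation}) with $Hol=\psi\circ\delta_{BZ}$ (Proposition~\ref{prop:BZ})---is reasonable, but the key geometric input on which everything rests is false: minimality at $L$ does \emph{not} propagate to the leaves of the transverse foliation, so $K_\ell|_{\tilde S}\neq A_{\tilde S}$ and $\mathbf K_\ell\neq\mathbf T_\ell$ in general. The paper's own Example~\ref{ex:vectorbundle} is a counterexample: take $M=E$ a vector bundle of rank $k$ over $L$, $\mathcal F$ the vector fields tangent to the zero section, and $A$ the (action algebroid associated to the) Lie algebroid $CDO(E)$, which has minimal rank $\dim L+k^2$ with $A|_L=A_L$. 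On the transversal $p^{-1}(\ell)=E_\ell$ one finds $K_\ell=\mathfrak{gl}(E_\ell)\ltimes E_\ell$, of constant rank $k^2$, whereas the leaf $\tilde S=E_\ell\setminus\{0\}$ through any $m\neq 0$ is regular, so $A_{\tilde S}=T\tilde S$ has rank $k$. Your ``rank balance'' $\dim\tilde S+\dim H^{-1}(\mathcal F,m)=\dim H^{-1}(\mathcal F,\ell)$ reads $k+0=k^2$ and fails for $k\geq 2$ (likewise for $\mathfrak{so}(3)$ acting on $\mathbb R^3$, where $A$ has minimal rank at the origin but the balance reads $2+0\neq 3$). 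The reason is that $\dim H^{-1}(\mathcal F,m)=\dim(\mathcal F/I_m\mathcal F)-\dim T_m\mathcal F$, and $\dim(\mathcal F/I_m\mathcal F)$ is only upper semicontinuous: minimality of $A$ along the singular leaf says nothing at nearby points, where the number of pointwise-independent generators drops and the surjection $K_\ell|_{\tilde S}\twoheadrightarrow A_{\tilde S}$ acquires a large kernel. Concretely, in the example above $\pi_{n-1}(\mathbf K_\ell|_m,m)$ involves the homotopy of $\widetilde{GL(E_\ell)^0}$ while $\pi_{n-1}(\mathbf T_\ell|_m,m)\cong\pi_{n-1}(S^{k-1})$, so no identification is possible. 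With this, your assertions $q\circ\psi=\mathrm{id}$ and $\phi:=\psi$ collapse, and the final computation $\phi\circ Hol_{AS}=\psi\circ q\circ Hol=Hol$ has no basis; the hoped-for local normal form argument in your last paragraph cannot rescue it, since the counterexample \emph{is} the linear local model.

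For comparison, the paper does not identify $\mathbf K_\ell$ with $\mathbf T_\ell$ at all: it records the corollary as an immediate consequence of Proposition~\ref{prop:BZ}, i.e.\ of the factorization $Hol=\psi\circ\delta_{BZ}$, with the minimal-rank hypothesis $A|_L=A_L$ entering only along $L$ itself (for instance $K_\ell|_{\{\ell\}}=H^{-1}(\mathcal F,\ell)=A_{\{\ell\}}$, so minimality does transfer to the point-leaf $\{\ell\}$ of $\mathcal T_\ell$, and lifts of base maps with initial shape over $L$ only see $A|_L=A_L$, i.e.\ exactly the AS data). Note also that where the factorization is actually exploited in Example~\ref{ex:vectorbundle}, at regular points $m$ of $\mathcal T_\ell$ the map $q$ is an isomorphism on $\pi_{n-1}$ for $n\geq 3$ (since $H^{-\bullet}(\mathcal T_\ell,m)=0$ there, by the exact sequence of Proposition~\ref{prop:computingISotropies}), so $\phi$ is locally just $q^{-1}$---a pointwise fact requiring none of your global identification. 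In short: your first and last reductions are fine, but the middle paragraph is a genuine gap, and it is refuted by the very example the corollary is designed to treat.
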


Here are a few instances of this situation:

\begin{example} 
\label{ex:vectorbundle}
\normalfont
Let $p:E\to L$ be a vector bundle of rank $k$ over a manifold $L$ that we identify with the zero section. For simplicity, we assume $\pi_1(L)=0$. Let $\fol{F}$ the singular foliation of all vector fields on $E$ tangent to $L$. Every linear connection induces an Ehresmann connection for the zero section.

The leaf $L$  admits a Lie algebroid of minimal rank: the Lie algebroid $CDO(E)$ (see \cite{Mackenzie}), so that we are in the situation of Corollary \ref{cor:minimalRank}.

\begin{enumerate}
    \item Let us compute $Hol_{AS}|_m $  for all nonzero $ m \in p^{-1}(L)$.  
    The restriction of the universal holonomy groupoid to $ E \backslash L $ is the fundamental groupoid of  $E \backslash L$. Since $L$ is simply-connected by assumption, the latter groupoid is the pair groupoid $(E \backslash L) \times ( E \backslash L ) \rightrightarrows (E \backslash L)$, and  $Hol_{AS}|_m $ is the connecting map associated to the fibration:
 $  p \colon E \backslash L \to L $. 
Since the fiber is $ E_\ell \backslash \{0_\ell \}$. The latter being homotopic to a $(k-1)$-sphere, with $k$ the rank of $E$, we obtain group morphisms:
  $$ Hol_{AS}|_m : \pi_n(L,\ell) \to \pi_{n-1}(S^{k-1},N) $$
    \item Let us compute $Hol_{AS}|_\ell $ for $\ell\in L$.   Since the Lie algebroid $CDO(E)$ integrates to the Lie groupoid made of all invertible linear maps between fibers of $E$, the universal cover of $s^{-1}(\ell) $ is the universal cover $\widetilde{{\mathrm{Fr}}(E)}$ of the frame bundle of $E$. Its fiber over $ \ell$ is the universal cover $ \widetilde{GL(E_\ell)}$ of $ GL(E_\ell)$.  In this case, $Hol_{AS}|_\ell $ is the connecting map associated to the fibration:
 $  p \colon \widetilde{{\mathrm{Fr}}(E)}  \to L $.
 Since the fibers are $ \widetilde{GL(E_\ell)}$, we obtain therefore group morphisms:
  $$ Hol_{AS}|_\ell := \left\{\begin{array}{rcll} \pi_n(L,\ell)& \to& \pi_{n-1}( \widetilde{GL(E_\ell)},e)=\pi_{n-1}(GL(E_\ell),e ) & \hbox{ for $n \geq 3$ }  \\ 
 \pi_2(L,\ell) &  \to & {\mathrm{Center}} \left( \widetilde{GL(E_\ell)}\right) = \mathbb Z/ 2 \mathbb Z &   \end{array} \right. $$
 A direct computation shows that the second map corresponds to the second class of Stiefel-Whitney. 
\end{enumerate}
\end{example}

\begin{example}
\normalfont
When $L=S^k$ and $ E= TS^k$ with $ k \geq 2$, the higher holonomies of Example \ref{ex:vectorbundle} satisfy $Hol_{AS}|_m=0$ if and only if
\begin{enumerate}
    \item[(i)]  $m$ is not in the zero section of $TS^n$ and $k=3,7$.
    \item[(ii)] $m$ is in the zero section of $TS^n$ and $k$ is odd.
\end{enumerate}

This comes from the well-known fact that the tangent spaces of spheres of odd dimension admit a nowhere vanishing section, while their frame bundles admit a section iff $k=3,7$.  
\end{example}

\begin{example}
``Period map of Crainic-Fernandes.''
\normalfont
The second higher holonomy, evaluated at the point $\ell $, is a group morphism $\pi_2(L) \to {\mathrm{Center}} (I_\ell(\mathbf T_\ell))$. Let us check that it coincides with Crainic-Fernandes' period map \cite{MR1973056} for the holonomy Lie algebroid $ A_L \to L$. The higher holonomies $ Hol|_\ell $ and  $Hol_{AS}|_\ell $ agree on $\pi_2(L,\ell) $ in view of Proposition \ref{prop:ASrelation}. 
   Item 2 in Corollary 3.12 in \cite{Zhu} identifies $ Hol_{AS}|_\ell$ (which is associated to the Lie algebroid fibration $ A_L \to TL$) with the period map of Crainic-Fernandes.
This completes the proof of the claim.
We could also prove the claim using the equality of the short exact sequence \eqref{eq:computingISotropies3} 
with the short exact sequence (3.6) in \cite{AZ13} and a similar claim made in Remark 4.3 in \cite{AZ13}. Lastly, since Debord \cite{MR3119886} proved longitudinal smoothness (using a totally independent argument),  the period map of Crainic-Fernandes has a discrete image.
\end{example}

\bibliographystyle{alpha}
\bibliography{biblio}	

\end{document}